%%%%
\documentclass[leqno,final]{siamltex}
%\documentclass[leqno,draft]{siamltex}
%\pdfoutput=1
%\usepackage{showkeys}
%\usepackage{showlabels}
\usepackage{amsmath}
\usepackage{graphicx}
\usepackage{amssymb}
\usepackage{color}

\setlength{\hoffset}{.7in}
%\usepackage[config, labelfont={bf}]{caption,subfig}
%\DeclareGraphicsExtensions{.jpg,.pdf,.png,.jpeg}
%\setlength{\voffset}{-.2in}
\pagestyle{myheadings}

\newtheorem{remark}{Remark}

\newcommand{\cP}{\mathcal{P}}
\newcommand{\cQ}{\mathcal{Q}}

\newcommand{\cT}{\mathcal{T}}
\renewcommand{\div}{\mbox{\rm div\,}}
\newcommand{\curl}{\mbox{\rm curl\,}}

\newcommand{\cK}{\mathcal{K}}

\newcommand{\cF}{\mathcal{F}}
\newcommand{\cV}{\mathcal{V}}
\newcommand{\cW}{\mathcal{W}}

\newcommand{\mP}{\mathbb{P}}
\newcommand{\mV}{\mathbb{V}}
\newcommand{\mW}{\mathbb{W}}
\newcommand{\R}{\mathbf{R}}

\newcommand{\veps}{\varepsilon}
\newcommand{\Ome}{\Omega}
\newcommand{\p}{\partial}
\newcommand{\nab}{\nabla}
\def\esssupT{\underset{t\in [0,T]}{\mbox{\rm ess sup }}}

%%%%%%%
\begin{document}

\title{Analysis of Fully Discrete Mixed Finite Element Methods for Time-dependent Stochastic Stokes Equations
	with Multiplicative Noise}
\markboth{XIAOBING FENG AND HAILONG QIU}{MIXED FINITE ELEMENT METHODS FOR STOCHASTIC STOKES EQUATIONS}

\author{
Xiaobing Feng\thanks{Department of Mathematics, The University of Tennessee,
Knoxville, TN 37996, U.S.A. ({\tt xfeng@math.utk.edu})
The work of the this author was partially supported by
the NSF grants DMS-1318486 and DMS-1620168.}
\and
Hailong Qiu\thanks{School of Mathematics and Physics, Yancheng Institute of Technology,
Yancheng, 224051, China. ({\tt qhllf@163.com})
The work of the this author was partially supported by the NSF of China grant 11701498.}
%\and
%Andreas Prohl\thanks{Mathematisches Institut, Universit\"at T\"ubingen, Auf der
%Morgenstelle 10, D-72076, T\"ubingen, Germany ({\tt prohl@na.uni-tuebingen.de}). }
}

\maketitle

\begin{abstract}
This paper is concerned with fully discrete mixed finite element approximations of
the time-dependent stochastic Stokes equations with multiplicative noise. A prototypical method, which
comprises of the Euler-Maruyama scheme for time discretization and the Taylor-Hood mixed element
for spatial discretization is studied in detail. Strong convergence with rates is established not only for
the velocity approximation but also for the pressure approximation (in a time-averaged fashion).
A stochastic inf-sup condition is established and used in a nonstandard way to
obtain the error estimate for the pressure approximation in the time-averaged fashion.
%The cruxes of the analysis are to show the H\"older continuity in time measured in various spatial
%norms for the SPDE solution and to judiciously make use of these
%properties in the error analysis.
%
Numerical results are also provided to validate the theoretical
results and to gauge the performance of the proposed fully discrete mixed finite methods.
\end{abstract}

\begin{keywords}
Stochastic Stokes equations, multiplicative noise, Wiener process, It\^o stochastic integral,
mixed finite element methods, inf-sup condition, error estimates
\end{keywords}

\begin{AMS}
65N12, %Stability and convergence of numerical methods
65N15, %Error bounds
65N30, %Finite elements, Rayleigh-Ritz and Galerkin methods, finite methods
\end{AMS}

%%%%%%%%
\section{Introduction}\label{sec-1}
We consider the following time-dependent stochastic Stokes equations for viscous incompressible fluids:
\begin{subequations}\label{eq1.1}
\begin{alignat}{2} \label{eq1.1a}
du &=\bigl[\nu\Delta u-\nabla p +f\bigr] dt +B(\cdot,u) dW(\cdot)  &&\qquad\mbox{a.s. in}\, D_T:=(0,T)\times D,\\
\div u &=0 &&\qquad\mbox{a.s. in}\, D_T,\label{eq1.1b}\\
u &=0 &&\qquad\mbox{a.s. on}\, \partial D_T:=(0,T)\times \partial D, \label{eq1.1c}\\
u(0)&=u_0 &&\qquad\mbox{a.s. in}\, D,\label{eq1.1d}
\end{alignat}
\end{subequations}
where $u$ and $p$ denote respectively the velocity field and the pressure of the fluid, and
$f$ is a given source field. $D\subset \R^d (d=2,3)$ is a bounded domain with boundary $\p D$,
$B: [0,T]\times [H^1(D)]^d\to L_0(L^2(D), [L^2(D)]^d)$ and $\{W(t); t\geq 0\}$ is an $[L^2(D)]^d$-valued
$Q$-Wiener process (see Section \ref{sec-2} for their precise definitions).

The system of equations \eqref{eq1.1a}, which is called stochastic Stokes equations/system, is
a simplified version of the stochastic Navier-Stokes model for turbulent fluids (cf. \cite{BT1973,Bensoussan95}
and the references therein) by omitting the nonlinear term $-(u\cdot \nabla)u\, dt$ on the right-hand side
of \eqref{eq1.1a}. The stochastic term $B(\cdot,u)\, dW$, which often is called the noise, adds a
solution-dependent source term to the corresponding deterministic Stokes (and the Navier-Stokes)
system.  When $B\equiv 0$, \eqref{eq1.1a} reduces to the time-dependent deterministic
Stokes equations \cite{Temam01}. A motivation for adding such a noise term is to allow the
stochastic models to capture the turbulence
phenomenon with choosing a "right" operator $B$ and a Wiener process $W$ by numerical simulation.
Since the Stokes system \eqref{eq1.1a}--\eqref{eq1.1d} is a simplification
of the more complicated stochastic Navier-Stokes system,
all the results about mathematical theory established for the
corresponding Navier-Stokes system clearly apply to the Stokes system.
We refer the interested reader to \cite{BT1973, Bensoussan95, Flandoli_Gatarek95, HM06, CCG10} and the
references therein for detailed discussions about solution concepts, well-posedness and regularity of
solutions of the stochastic Navier-Stokes problems with various types of noises.
We note that system \eqref{eq1.1} is nonlinear because $B(u)$ is a nonlinear function in $u$.

Since there exists a large amount of literature on numerical methods
for the deterministic Stokes and Navier-Stokes equations, it is natural to try to adapt those
successful numerical methods for solving problem \eqref{eq1.1}.
Indeed, with some special care on discretizing the stochastic term $B(\cdot,u)\, dW$,
all other terms in \eqref{eq1.1a} can be discretized in the same way as done in the deterministic case.
However, since in the stochastic case the solution $u$ is a Hilbert
space-valued stochastic process and spatial norms of the velocity are only H\"older continuous in $t$,
all the deterministic analysis techniques and machineries which require the differentiability of the solution  in $t$ are not applicable in the stochastic case.
%this is especially true if one is interested in establishing convergence
%and rates of convergence for numerical methods.
%Moreover, in the stochastic case, since $u\equiv u(x,t,\omega)$, and unlike in the deterministic case,
%the primary goal of numerical approximations
%of the solution $u$ is not only to approximate $u$ at a specific point $(x_i, t_j, \omega_k)$ but also
%to compute quantities of stochastic interests of $u$ (such as the expected value, the variance,
%and higher moments), hence, solving stochastic models is much more expensive. Furthermore,
%numerical methods for the stochastic models are more difficult to analyze because one extra layer
%of integration must be evaluated and be controlled, which
%is often difficult to do when nonlinear effects are present in stochastic differential equations.
Moreover, for saddle point problem \eqref{eq1.1}, it is known \cite{LRS03} that the pressure
process $\{p(t); 0\leq t\leq T\}$ has very weak regularities (it is only a measure), which makes
the numerical analysis of \eqref{eq1.1} quite challenging, especially for the mixed finite
element discretization where the velocity and pressure are approximated simultaneously.
As a result, the pool of rigorous numerical methods (i.e.,  those with support of convergence analysis)
for the stochastic Stokes and Navier-Stokes equations is limited and those results only become available recently.

In \cite{CHP12} Carelli {\em et al.} proposed a Chorin-type time-splitting method
for problem \eqref{eq1.1} to address the subtle interplay of noise and pressure. Strong convergence
with rates was proved for the velocity approximation with sinusoidal noises.
Although pressure approximation was also constructed and computed by the Chorin-type method, its convergence
was not addressed in \cite{CHP12}. In addition, the semi-discrete in time Euler-Maruyama scheme
was also considered and used as a tool in the analysis of the Chorin-type method, but its convergence
analysis was not addressed.
In \cite{CP12} Carelli and Prohl proposed an implicit and a semi-implicit
time discretizations for the stochastic Navier-Stokes problem in $2$-D with sinusoidal noises.
Strong convergence with rates was also established for the velocity approximation.
A fully discrete mixed finite element scheme was also considered and strong convergence
with rates was also proved for the velocity approximation. As noted
in \cite{CP12}, the interaction of Lagrange multipliers with the stochastic forcing in the scheme
limits the accuracy of general discretely LBB-stable space discretizations. Strategies to overcome
this difficulty were also proposed in \cite{CP12} although the convergence of the pressure approximation
was not addressed. Other recent works for stochastic Navier-Stokes equations include
an iterative splitting scheme which was proposed in \cite{BBM14}, a strong convergence in probability
was established in the 2-D case for the velocity approximation. In a very recent paper \cite{BM18}, the authors
proposed another time-splitting scheme and proved its strong $L^2$ convergence for the velocity approximation.
In \cite{YDG10} a posterior error estimates were studied for a fully discrete divergence-free
finite element method for the 2-D stochastic Navier-Stokes equations, both upper and lower a
posterior error bounds were established for the velocity approximation in the paper. 
%
%As its title indicates, the focus of this paper is to analyze
%some fully discrete mixed finite element methods which approximate both velocity $u$ and
%pressure $p$ simultaneously. Although the mixed finite element method is
%the most popular and the most natural methodology for solving the deterministic Stokes
%and Navier-Stokes equations
%and its adaptation to the stochastic Stokes and Navier-Stokes equations is almost straightforward (cf.\,\cite{BCP12}),
%its qualitative analysis and understanding have been missing so far.
%
The paper \cite{BCP12} by Brze\'{z}niak, {\em et al.}, which perhaps is closest to this paper,
proposed two time-stepping schemes for mixed
finite element spatial discretizations of the stochastic Navier-Stokes equations with general
multiplicative noise and established the convergence for the velocity approximation
(as a function sequence) to weak martingale solutions in 3-D and to strong solutions in 2-D
using the compactness argument. Since the analysis was done in the space of discretely
solenoidal functions which allows to eliminate the discrete pressures from the schemes,
consequently, the convergence of the pressure approximations was bypassed in \cite{BCP12};
in addition, no rate of convergence was presented for the velocity approximation.
To the best of our knowledge, no convergence (hence, no rate of convergence) for a pressure approximation 
has been reported in the literature for system \eqref{eq1.1}.

The primary goal of this paper is to analyze convergence behaviors of the mixed
finite element method for the stochastic Stokes problem \eqref{eq1.1} with general multiplicative noise. Specifically, we shall establish strong convergence with rates not only for the velocity approximation
but also for the pressure approximation (in a time-averaged fashion). A secondary goal of the paper
is to use \eqref{eq1.1} as a prototypical example to develop numerical analysis techniques
which can be useful for analyzing mixed finite element approximations of the stochastic Navier-Stokes
equations and possibly other nonlinear stochastic PDEs.

The remainder of this paper is organized as follows. In Section \ref{sec-2} we introduce
function and space notation and some background materials for problem \eqref{eq1.1}
and establish a few preliminary results such as the stochastic
inf-sup condition and H\"older continuity in time of the solution in various spatial norms.
These results plays an important role in the error analysis of this paper.
In Section \ref{sec-3} we state the Euler-Maruyama time-stepping scheme for problem \eqref{eq1.1} and
derive some error estimates for both the velocity and pressure approximations.
In Section \ref{sec-4} we formulate a fully discrete mixed finite element method
which uses the prototypical Taylor-Hood mixed finite element method for spatial discretization.
The highlight of this Section is to establish the following error estimates for the numerical
solution $(u_h^n, p_h^n)$:
\begin{align}\label{eq1.4}
\max_{1\leq n\leq N} \Bigl(E\Bigl[\|u(t_n)-u^n_h\|^2_{L^2}\Bigl]\Bigl)^{\frac12}
&+\Bigl(E\Bigl[k\sum_{n=1}^{N}\|\nabla (u(t_n)-u^n_h)\|^2_{L^2}\Bigr]\Bigr)^{\frac12}
\\\nonumber  &\leq \hat{C}_1k^{\frac12}+ \hat{C}_2 k^{-\frac12} h,\\\label{eq1.5}
\max_{1\leq n\leq N} \Bigl(E\Bigl[\|\nabla (u(t_n)-u^n_h)\|^2_{L^2}\Bigr]\Bigr)^{\frac12}
&+\Bigl( E\Bigl[ k\sum_{n=1}^{N}\|A (u(t_n)-u^n_h)\|^2_{L^2}\Bigr] \Bigr)^{\frac12}\\ \nonumber
&\leq \hat{C}_3 k^{\frac12}+ \hat{C}_4 k^{-\frac12} h, \\
\label{eq1.5a}
\max_{1\leq n\leq N} \Bigl(E\Bigl[\|A(u(t_n)-u^n_h)\|^2_{L^2}\Bigr]\Bigr)^{\frac12}
&+\Bigl( E\Bigl[ k\sum_{n=1}^{N}\|A^{\frac{3}{2}}(u(t_n)-u^n_h)\|^2_{L^2}\Bigr] \Bigr)^{\frac12}\\ \nonumber
&\leq \hat{C}_5 k^{\frac12}+ \hat{C}_6 k^{-\frac12} h, \\\label{eq1.5b}
\max_{1\leq m\leq N} E\Bigl[\Bigl\|\int_0^{t_m} p(s)ds-k\sum^m_{n=1} p^n_h \Bigr\|_{L^2}\Bigr]
&\leq \hat{C}_7 k^{\frac12}+\hat{C}_8 k^{-\frac12} h.
\end{align}
%As expected, the pressure estimate is obtained with a help of a discrete inf-sup condition
%(see Section \ref{sec-4} for the details). Moreover, the regularity and stability estimates of the time
%semi-discretization of \eqref{eq1.1} and H\"older continuity in time of the spatial $L^2$-norm
%of the SPDE solution also play a critical role for the above error estimates.
%
It should be noted that the (bad) $k^{-\frac12}$ factor in the above error estimates
is due to the low regularity of the pressure $p$ and the simultaneous approximation property of
the mixed method. Obviously, compared to the error estimates for mixed finite element approximations of
the deterministic Stokes problem
(cf. \cite{Girault_Raviart86}), the above estimates seem inferior, however, our numerical experiments
indicate that these estimates are in fact sharp for the stochastic Stokes problem \eqref{eq1.1}.
The above theoretical results on one hand reveal the insight of the (damaging)
effect of the noise on the performance of the standard mixed finite element method for the stochastic
Stokes problem and on the other hand suggest that modifications and improvements must be done
to the standard mixed finite element method in order to improve its performance so that the modified method
(such as mixed-primal finite element method, stabilized finite element method)
could produce competitive velocity approximations with that of divergence-free finite element methods (cf. \cite{CP12}).
Finally, in Section \ref{sec-5}, we provide some numerical experiments to validate our
theoretical results and to gauge the performance of the proposed numerical method.

%%%%%%%
\section{Preliminaries}\label{sec-2}
\subsection{Notation and assumptions}\label{sec-2.1}
Standard function and space notation will be adopted in this paper. In particular, for a given
positive integer $m$, let $H^m(D)$ denote the standard Sobolev space consisting of all
real-valued functions whose up to $m$th order weak derivatives are $L^2$-integrable
on $D\subset \R^d$, and $\|\cdot\|_{H^m}$ denotes its norm.
Let $H^1_0(D)$ be the subspace of $H^1(D)$ whose functions have zero trace on $\p D$,
$H^0(D):=L^2(D)$ and $(\cdot,\cdot):=(\cdot,\cdot)_D$ denote the standard $L^2$-inner product.
Let $(\Omega,\cF,\cF_t,\mP)$ be a probability space with $\sigma$-algebra $\cF$, the normal filtration
$\cF_t$ and the probability measure $\mP$. For a random variable $v$ defined on $(\Omega,\cF,\cF_t,\mP)$,
let $E[v]$ denote the expected value of $v$. We also let $(\Omega,\cF, \{\cF_t\},\mP)$ be a complete
probability space with continuous filtration $\{\cF_t\subset \cF; t\geq 0\}$.
a.s. means {\em almost surely} with respect to the probability measure $\mP$.
For a Banach space $Y$, let $L^p(0, T; Y)$ denote the time-space function space
  endowed with following norm:
 \[
\|w\|_{L^p(0,T;Y)} :=\left\{\begin{array}{ll}
 \bigl(\int_0^T\|w\|_{Y}^pdt\bigr)^{\frac1p}   &\,\mbox{if}\, \, \,\, 1\leq p< \infty,\\ \\
\esssupT\|w\|_{Y}  &\,\mbox{if}\,  \, \, \, p= \infty.
 \end{array}\right.
\]
 We shall often use the abbreviated notation $L^p(Y):=L^p(0, T; Y)$ for convenience.

For a normed vector space $X$ with norm $\|\cdot\|_{X}$, let $[X]^d$ denote the space of
all $d$-vector-valued mappings whose components belong to $X$. Define the Bochner space
\[
L^p(\Omega,X):=\bigl\{v:\Omega\rightarrow X;\, E[\|v\|_{X}^p] <\infty \bigr\}
\]
and the norm
\[
\|v\|_{L^p(\Omega,X)}:=\Bigl(E\Bigl[\|v\|_X^p\Bigr]\Bigr)^{\frac1p}, \qquad 1<p<\infty.
\]
We also introduce the following special space notation:
\begin{align*}
&\cV:=[H^1_0(D)]^d, \cV_{*}:=[H^2_0(D)]^d, \quad \cW=L^2_0(D):=\{v\in L^2(D); \, (v,1)_D=0 \}, \\
&\cV_0:=\bigl\{v\in \cV;\,\div v=0 \mbox{ in }D \bigr\}, \quad \mV:=L^2(\Omega,\cV),
\quad \mW:=L^2(\Omega, \cW).
\end{align*}

To give a meaning to the stochastic term $B(\cdot, u)\, dW$, we need to recall the
definition of Hilbert space-valued $Q$-Wiener process $W$.  Let $Q$ be a non-negative and symmetric
linear operator from $L^2(D)$ to itself. Assume that $Q$ has a set of eigenvalues and
eigenfunctions $\{ (\lambda_j, q_j) \}_{j\geq 1}$ such that $\{q_j\}_{j\geq 1}$ forms an orthonormal basis
for $L^2(D)$. Let $\{\beta_j(t); t\geq 0\}_{j\geq 1}$ be a sequence of independent identically
distributed (iid) real-valued Brownian motions (or Wiener processes) adapted to
$\{\cF_t\}$. Then an $L^2(D)$-valued $Q$-Wiener process
$W=\{W(t); t \geq 0\}$ on $(\Omega,\cF, \{\cF_t\},\mP)$ is defined as
\begin{equation}\label{eq2.1}
W(\cdot,t)=\sum_{j=1}^{\infty}\sqrt{\lambda_j} q_j(\cdot) \beta_j(t) \qquad\mbox{a.s. in } D.
\end{equation}

Let $\cK_{*}:= Q^{\frac{1}{2}}(L^2(D))$ be the Cameron-Martin space for the Wiener measure on $C([0, T ]; L^2(D))$
 equipped with the scalar product $(\varphi, \psi)_{\cK_{*}} = (Q^{-\frac{1}{2}}\varphi, Q^{-\frac{1}{2}}\psi)_{L^2(D)}$
 for all $\varphi, \psi\in L^2(D)$, where $Q^{-\frac{1}{2}}$ denotes the pseudoinverse of  $Q^{\frac{1}{2}}$.
Let $\cK:=L_0(L^2(D);[L^2(D)]^d)$ be the Banach space of linear operators
from $L^2(D)$ to $[L^2(D)]^d$ which have finite Hilbert-Schmidt norms denoted by $\|\cdot\|_{\cK}$. For any $1<p<\infty$,
let $M^p_{\cF_t}(\Omega, L^p(0,T; \cK))$ be the subspace of the Bochner space
$L^p(\Omega, L^p(0,T; \cK))$ whose mappings are $\{\cF_t\}$-adapted.
Then for any $\varphi\in M^2_{\cF_t}(\Omega, L^2(0,T; \cK))$, the stochastic integral
$\int_0^t\varphi(s)\,dW(s)$ for $0\leq t\leq T$ is defined as an $[L^2(D)]^d$-valued function by
\begin{equation}\label{ito_int}
\Bigl(\int_0^t\varphi(s)\,dW(s),\chi\Bigr)
= \lim_{J\rightarrow\infty}\sum^J_{j=1}  \sqrt{\lambda_j} \int_0^t\bigl(\varphi(s)q_j,\chi\bigr)\, d\beta_j(s)
\quad \forall \chi\in [L^2(D)]^d, \, a.s.
\end{equation}
Note that the stochastic integral on the right-hand side is understood in the It\^o's sense.
It is well-known \cite{PZ92} that $\int_0^t\varphi(s)dW(s)$ is an $\{\cF_t\}$-martingale and there holds
the following It\^o's isometry:
\begin{equation}\label{eq2.2}
E\Bigl[\Bigl\|\int_0^T\varphi(s)\,dW(s)\Bigr\|_{L^2(D)}^2\Bigr]
=E\Bigl[\int_0^T\|\varphi(s)\|^2_{\cK}\,ds\Bigr].
\end{equation}

The above definition of stochastic integrals suggests that $B(\cdot,u)$ needs to belong to
$M^2_{\cF_t}(\Omega, L^2(0,T;\cK))$ in order to give a meaning to the stochastic term in \eqref{eq1.1a}.
Indeed, in this paper we assume that $B:[0,T]\times [L^2(D)]^d\rightarrow L^2(\Omega,\cK)$ is
H\"older-Lipschitz continuous in time and has a linear growth in the second argument in the sense that
there exists a constant $C_T > 0$ such that $\mP$-a.s.
\begin{subequations}
\begin{align}\label{eq2.6}
\|D^{j}(B(s,v)-B(t,w))\|_{\cK} &\leq C_T\bigl[|s-t|^{\frac12} +\|D^{j}(v-w)\|_{L^2}\bigr], \quad  j=0,1,2,\\    
  \label{eq2.6a}
 |D^{i}B(t,v)| &\leq  C_T \bigl(1+\|D^{i}v\|_{L^2} \bigr),\ \ \  i=0,1,2,3,4 
%{\color{red}\|\nabla (B(s,v)-B(t,w))\|_{\cK}} &\leq {\color{red}C_T\bigl[|s-t|^{\frac12} +\|\nabla(v-w)\|_{L^2}\bigr],} \\\label{eq2.6b}
%{\color{red}\|\Delta (B(s,v)-B(t,w))\|_{\cK} }&\leq{\color{red} C_T\bigl[|s-t|^{\frac12} +\|\Delta(v-w)\|_{L^2}\bigr], }\\\label{eq2.6c}
% |D^{i}B(t,v)| &\leq {\color{blue}C_T \bigl(1+\|D^{i}v\|_{L^2} \bigr),\ \ \  i=0,1,2,3,4 
 % \\\label{eq2.6d}
%{\color{red}|\nabla B(t,v)|} & {\color{red}\leq C_T \bigl(1+\|\nabla v\|_{L^2} \bigr), } \\\label{eq2.6e}
%{\color{red}|\Delta B(t,v)|} & {\color{red}\leq C_T \bigl(1+\|\Delta v\|_{L^2} \bigr) }
\end{align}
\end{subequations}
for any $v, w \in [L^2(D)]^d, \, s,t\in [0,T]$, for $v\in  [L^2(D)]^d$.
\begin{equation}\label{eq2.7}
B(\cdot,v)\in  L^2(\Omega; L^2(0,T; \cK)).
\end{equation}

Finally, we assume that $D\subset \R^d (d=2,3)$ is a bounded domain such that there is a
unique strong solution $(v,\chi)\in \cV_0\cap[H^2(D)]^d\times L^2_0(D)\cap H^1(D)$ to the following deterministic
stationary Stokes problem:
%\begin{subequations}\label{eq2.3}
\begin{alignat*}{2}
-\nu\Delta v+\nabla \chi &=g  &&\qquad\mbox{in }D ,\\
\div v&=0 &&\qquad \mbox{in }D,\\
v &=0  &&\qquad \mbox{on } \partial D,
\end{alignat*}
%\end{subequations}
which satisfies the estimate
\begin{equation*}%\label{eq2.4}
\|v\|_{H^2(D)}+\|\chi\|_{H^1(D)}\leq C\|g\|_{L^2(D)}
\end{equation*}
for any $g\in L^2(D)$. It is well-known \cite{Temam01} that the above regularity holds
if $D$ is a convex polygonal domain in 2-D,  while in $3$-D it holds for $C^2$-domains.

%%%%%
\subsection{Properties of variational weak solutions}\label{sec-2.2}
In this subsection we first recall the variational weak solution definition for problem \eqref{eq1.1}.
We then prove a stochastic inf-sup condition for the  mixed term and establish
some H\"older continuity (in time) of various spatial norms of weak solutions. These
auxiliary results will play an important role in our convergence analysis to be given
in the subsequent sections.

\begin{definition}\label{def2.1} (\cite{LRS03})
Suppose that $u_0\in L^2(\Omega, \cV_0)$ and $f\in L^2(\Omega,L^{\infty}(0,T;$ $[L^2(D)]^d)$.
A pair of $\{\cF_t\}$-adapted stochastic processes $\{(u(t), p(t)); 0\leq t\leq T\}$ is called
a weak solution of \eqref{eq1.1} if $(u,p)\in  L^2\bigl(\Omega, L^\infty(0,T; [L^2(D)]^d) \cap L^2(0,T; \cV) \cap C(0,T; \cV^*) \bigr)
\times   L^2\bigl(\Omega, L^2(0,T; W^{-1,\infty}(D))\bigr)$ and satisfy $\mP$-a.s
\begin{subequations}\label{eq2.8}
\begin{align}\label{eq2.8a}
&\bigl(u(t),\psi \bigr) +\int_0^t \nu a(u(s),\psi)\,  ds +  b\Bigl(\psi, \int_0^t p(s)\, ds \Bigr)   \\ \nonumber
&\qquad =(u_0,\psi)+\int_0^t\bigl(f(s),\psi\bigr)\, ds+\Bigl(\int_0^t B(s,u(s))\, dW(s),\psi\Bigr) &&\qquad\forall \psi\in \cV,\\
& b(u,q) =0 \qquad\forall q\in \cW  &&  \label{eq2.8b}
\end{align}
\end{subequations}
for all $t\in (0,T]$. Where the bilinear forms $a(\cdot,\cdot)$ and $b(\cdot,\cdot)$ are defined as follows:
\begin{subequations} \label{eq2.12}
\begin{alignat}{2}\label{eq2.12a}
a(v,w) &:= \bigl(\nabla v, \nabla w\bigr) &&\qquad \forall v,w\in \cV,\\
b(v,q) &:= \bigl(\div v, q\bigr)  &&\qquad\forall v\in \cV,\,  q\in \cW.
\end{alignat}
\end{subequations}

\end{definition}

\begin{remark}\label{rem2.1}
	We write $b\Bigl(\psi, \int_0^t p(s)\, ds \Bigr)$ instead of  $\int_0^t b(\psi, p(s))\, ds$ in  \eqref{eq2.8a} because the regularity of $p$ is too weak to allow the latter to make sense unless more
	smoother test function $\psi$ is used. On the other hand, it could be shown that $\int_0^t p(s)\, ds \in L^2\bigl(\Omega, L^2(D\times (0,T))\bigr)$ so the former is well defined.  Thus, in the remainder of
	this paper, the pressure process $\{p(t)\}_{t\geq 0}$ will always appear in the time integral form.
\end{remark}

It should also be noted that other solution notions such as mild and strong solutions
have also be introduced and studied in the literature for problem \eqref{eq1.1}
(cf. \cite{PZ92,Chow07} and the references therein).
In addition, nonhomogeneous boundary conditions can be dealt with through standard lifting arguments.
In this paper, we only consider homogeneous boundary conditions to avoid the technicalities.
% that one can hope to obtain
% some better results for without using some additional techniques of solutions.

Next, we state a stochastic inf-sup condition for the bilinear form $b(\cdot, \cdot)$.

\begin{lemma}\label{lem2.1}
There exists a positive constant $\beta$ such that
\begin{align}\label{eq2.15}
\sup_{v\in \mV}\frac{E[b(v,q)]}{\|v\|_{\mV}}\geq \beta\|q\|_{\mW} \qquad\forall q\in \mW.
\end{align}
\end{lemma}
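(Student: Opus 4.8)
The plan is to reduce \eqref{eq2.15} to the classical deterministic inf-sup (LBB) condition for the Stokes pair by arguing pathwise in $\omega$ and then integrating. Under the standing regularity assumption on $D$ (convex polygon in 2-D, $C^2$ domain in 3-D), the divergence operator admits a bounded linear right inverse $\mathcal{B}:\cW\to\cV$ (the Bogovskii operator): there is a constant $C_0=C_0(D,d)>0$ such that $\div(\mathcal{B}q)=q$ in $D$ and $\|\mathcal{B}q\|_{\cV}\le C_0\|q\|_{\cW}$ for all $q\in\cW$; equivalently, $\sup_{v\in\cV} b(v,q)/\|v\|_{\cV}\ge C_0^{-1}\|q\|_{\cW}$ holds in the deterministic setting.

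First, given $q\in\mW$, I would define $v\in\mV$ pathwise by $v(\omega):=\mathcal{B}\bigl(q(\omega)\bigr)$ for $\mP$-a.e.\ $\omega$. The routine point to verify is that this $v$ is an admissible competitor in the supremum: since $\mathcal{B}$ is a fixed bounded (hence continuous) linear operator and $\omega\mapsto q(\omega)$ is strongly $\cW$-measurable, the composition $\omega\mapsto v(\omega)$ is strongly $\cV$-measurable, and the pointwise bound $\|v(\omega)\|_{\cV}\le C_0\|q(\omega)\|_{\cW}$ yields, after squaring and taking $E[\cdot]$, that $v\in\mV$ with $\|v\|_{\mV}\le C_0\|q\|_{\mW}$.

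Next I would compute the numerator. Since $\div v(\omega)=q(\omega)$ a.s., we have $b\bigl(v(\omega),q(\omega)\bigr)=(\div v(\omega),q(\omega))_D=\|q(\omega)\|_{L^2}^2$ a.s., hence $E[b(v,q)]=E\bigl[\|q\|_{L^2}^2\bigr]=\|q\|_{\mW}^2$. Combining the two estimates, for $q\neq 0$ in $\mW$,
\[
\sup_{w\in\mV}\frac{E[b(w,q)]}{\|w\|_{\mV}}\ge\frac{E[b(v,q)]}{\|v\|_{\mV}}\ge\frac{\|q\|_{\mW}^2}{C_0\|q\|_{\mW}}=\beta\|q\|_{\mW},\qquad \beta:=C_0^{-1},
\]
while the case $q=0$ is trivial; this establishes \eqref{eq2.15}.

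The only genuinely delicate point is the measurability of the lift $v=\mathcal{B}q$, and it is handled cleanly precisely because $\mathcal{B}$ is a deterministic operator independent of $\omega$ (and, should adaptedness ever be needed, it maps $\cF_t$-measurable inputs to $\cF_t$-measurable outputs), so no structure of $q$ beyond $q\in L^2(\Omega,\cW)$ is required. Working instead from the deterministic inf-sup inequality alone, without an explicit right inverse of $\div$, would force a measurable-selection argument for near-maximizers of the pathwise supremum, which is why invoking the Bogovskii operator is the preferred route.
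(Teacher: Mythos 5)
Your proof is correct and follows essentially the same route as the paper: both arguments construct, pathwise in $\omega$, a deterministic bounded right inverse of the divergence applied to $q(\omega)$, note that measurability and the $L^2(\Omega)$ bound are inherited because the lifting operator does not depend on $\omega$, and then conclude from $E[b(v,q)]=\|q\|_{\mW}^2$. The only difference is cosmetic: you invoke the Bogovskii operator as the right inverse, whereas the paper builds the lift explicitly as $-\nabla\phi+\curl\psi$ via a random Neumann--Poisson problem and a biharmonic correction of the tangential trace.
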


\begin{proof}
%{\color{blue} We omit the proof to save space and refer the interested reader to \cite{Feng_Qiu18}
%for a detailed proof.}
Below we give a proof for the sake of completeness.
The proof follows the same lines as that for the deterministic inf-sup condition
(cf. \cite{ASV88,Girault_Raviart86}), we only sketch the main ideas and steps of the proof
in the case when $D$ is a 2-D bounded domain with smooth or convex polygonal boundary $\p D$.

For any fixed  $q\in \mW$, the first step  of the proof is to construct a random
field $v_1\in L^2(\Omega, [H^1(D)]^d)$
such that $\div v_1=q$ in $L^2(\Omega\times D)$ and $v_1\cdot n=0$ a.s. on $\p D$, where
$n$ denotes the unit outward normal to $\p D$. In addition, there exists a positive constant
$c_1$ such that
\begin{align}\label{eq2.16}
\|v_1\|_{L^2(\Ome,[H^1(D)]^d )}  \leq c_1 \|q\|_\mW.
\end{align}
The desired random field $v_1$ can be chosen as $v_1=-\nabla \phi$, where $\phi \in L^2(\Omega,H^2(D))$
is the solution of the following random Poisson problem:
\begin{align*}
%E\bigl[\bigl(\nabla \phi,\nabla \psi\bigr)\bigr]=E[(q, \psi)] \qquad\forall \psi\in L^2(\Omega,H^1_0(D)).
-\Delta \phi &=q \qquad\mbox{a.s. in } D,\\
\frac{\p \phi}{\p n} &=0 \qquad\mbox{a.s. on } \p D,\\
(\phi, 1) &=0  \qquad\mbox{a.s.}
\end{align*}
By the elliptic PDE theory we know that there exists a unique solution $\phi$, moreover, there exists
a positive constant $c_1$ such that
\begin{align*}
%\|\phi\|_{L^2(\Omega,H^2(D))}\leq c_1 \|q\|_{\mW},
\|\phi\|_{H^2(D)}\leq c_1 \|q\|_{L^2(D)}\qquad \mbox{a.s.}
\end{align*}
which clearly implies \eqref{eq2.16} with $v_1=-\nabla \phi$.

The second step of the proof is to construct a random field $v_2\in L^2(\Omega, [H^1(D)]^d)$
such that $\div v_2=0$ in $L^2(\Omega\times D)$,
$v_2\cdot \tau= -v_1\cdot \tau$ and $v_2\cdot n=0$ a.s. on $\p D$, where $\tau$ denotes
the positively oriented unit tangent vector to $\p D$. In addition, there exists a positive
constant $c_2$ such that
\begin{align}\label{eq2.17}
\|v_2\|_{L^2(\Ome,[H^1(D)]^d )}  \leq c_2 \|v_1\|_{L^2(\Ome,[H^1(D)]^d )}.
\end{align}
One such a random field is $v_2= \curl \psi$, where $\psi\in L^2(\Omega,H^2(D))$ satisfies
\begin{alignat*}{2}
\Delta^2\psi & =0  &&\qquad\mbox{a.s. in }  D,\\
\frac{\p \psi}{\p n} &=v_1\cdot \tau  &&\qquad\mbox{a.s. on } \p D,\\
\psi&=0  &&\qquad\mbox{a.s. on } \p D,
%\|\psi\|_{L^2(\Omega,H^2(D))} &\leq c_2 \|v_1\|_{\mW}. &&
\end{alignat*}
and
\[
\|\psi\|_{H^2(D)} \leq c_2 \|v_1\|_{H^1(D)} \qquad\mbox{a.s.}
\]
for some positive constant $c_2$. Note that the existence of $\psi$ is guaranteed by the elliptic PDE theorey.
%by the trace theorem (cf. \cite{Adams75}).
The above estimate immediately infers \eqref{eq2.17}.

The third step of the proof is to define $z=v_1+v_2$. It is easy to check that
$z\in L^2(\Ome,[H^1(D)]^d )$,
$\div z=q$ in $L^2(\Omega\times D)$ and $z=0$ a.s. on $\p D$, hence, $z\in \mV$.  Moreover,
it follows from \eqref{eq2.16} and \eqref{eq2.17} and the triangle inequality that
\begin{align}\label{eq2.18}
\|z\|_{\mV} %\leq \|v_1\|_{L^2(\Ome,[H^1(D)]^d )} + \|v_2\|_{L^2(\Ome,[H^1(D)]^d )}
\leq (1+c_2) \|v_1\|_{L^2(\Ome,[H^1(D)]^d )} \leq (1+c_2)c_1\|q\|_{\mW}.
\end{align}

Finally,  it follows from \eqref{eq2.18} that
\begin{align*}
\sup_{v\in \mV}\frac{E[b(v,q)]}{\|v\|_\mV}
\geq\frac{E[b(z,q)]}{\|z\|_\mV} =\frac{E\bigl[\|q\|_{L^2(D)}^2 \bigr]}{\|z\|_\mV}
\geq \frac{1}{(1+c_2)c_1}\|q\|_\mW. % \qquad\forall q\in \mW.
\end{align*}
Hence, \eqref{eq2.15} holds with $\beta=\frac{1}{(1+c_2)c_1}$. The proof is complete.
\end{proof}

The next theorem establishes  some H\"older continuity (in time) of the velocity field $u$
in various spatial norms.

\begin{theorem}\label{thm2.2}
Let $(u,p)$ be the weak solution to problem \eqref{eq1.1}, and assume that
$u\in L^2(\Omega, L^\infty(0,T; \cV\cap [H^3(D)]^d)),
\nabla u_j, A u, A^{\frac{3}{2}} u \in L^2(\Omega; C([0,T]; [L^2(D)]^d))$ for $1\leq j\leq d$, 
and $f\in L^2(\Omega;L^{\infty}(0,T,H^1(D)^d))$.
 Then there hold
\begin{subequations}
\begin{align}\label{eq2.20a}
& E\bigl[\|u(t)-u(s)\|^2_{L^2}\bigr]+\nu E\Bigl[\int_s^t \|\nabla(u(\xi)-u(s))\|_{L^2}^2\Bigr]d\xi
\leq C_1|t-s|,\\ \label{eq2.20b}
& E\bigl[\|\nabla (u(t)-u(s))\|^2_{L^2}\bigr]+\nu E\Bigl[\int_s^t \|A(u(\xi)-u(s))\|_{L^2}^2\Bigr]d\xi
\leq C_2|t-s|, \\ \label{eq2.20c}
&  E\bigl[\|A (u(t)-u(s))\|^2_{L^2}\bigr]+\nu E\Bigl[\int_s^t \|A^{\frac{3}{2}}(u(\xi)-u(s))\|_{L^2}^2\Bigr]d\xi
\leq C_3|t-s|
\end{align}
\end{subequations}
for any $0\leq s,t\leq T$,  where
\begin{align*}
&C_1 =\Bigl( ( 2\nu+2C_0C_T) E[\|\nabla u\|_{L^\infty(L^2)}^2]
+\frac{2}{\nu}E[\|f\|^2_{L^{\infty}(H^{-1})}]\Bigr)(1+2C_TT)e^{2C_TT},\\
&C_2 =\Bigl((2\nu+2C_0C_T)E\bigl[\|Au\|_{L^\infty(L^2)}^2\bigr]
+\frac{2}{\nu}E\bigl[\|f\|^2_{L^\infty(L^2)}\bigr] \Bigr) (1+2C_TT)e^{2C_TT},\\
&C_3 =\Bigl((2\nu+2C_0C_T)E\bigl[\|A^{\frac{3}{2}}u\|_{L^\infty(L^2)}^2\bigr]
+\frac{2}{\nu}E\bigl[\|f\|^2_{L^\infty(H^1)}\bigr] \Bigr)(1+2C_TT)e^{2C_TT},
\end{align*}
where $A:\cV\cap [H^2(D)]^d\to \cV_0$ denotes the Stokes operator (cf. \cite{Temam01}) and
$C_0$ is the constant in the Poincar\'e inequality for $H^1(D)$ functions.
\end{theorem}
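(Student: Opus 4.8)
The plan is to prove the three H\"older-in-time estimates \eqref{eq2.20a}--\eqref{eq2.20c} in a unified way by applying the operator $A^{j/2}$ (with $j=0,1,2$) to the velocity equation and running an energy argument on the increment $u(t)-u(s)$ over the interval $[s,t]$. The starting point is the mild/variational form of \eqref{eq1.1a}: for $\xi\in[s,t]$,
\begin{align*}
u(\xi)-u(s)=\int_s^\xi\bigl[\nu\Delta u(\sigma)-\nabla p(\sigma)+f(\sigma)\bigr]\,d\sigma+\int_s^\xi B(\sigma,u(\sigma))\,dW(\sigma).
\end{align*}
Since we are working in the divergence-free setting, I would test the equation for $A^{j/2}(u(\xi)-u(s))$ against itself (equivalently apply It\^o's formula to $\xi\mapsto\|A^{j/2}(u(\xi)-u(s))\|_{L^2}^2$), using that the pressure gradient is $L^2$-orthogonal to divergence-free fields so that the term $b(\cdot,\int p)$ drops out, and that $-\Delta$ restricted to $\cV_0$ coincides with the Stokes operator $A$. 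This turns the increment equation into a differential inequality for $\phi_j(\xi):=E[\|A^{j/2}(u(\xi)-u(s))\|_{L^2}^2]$ plus the dissipation term $\nu E\int_s^\xi\|A^{(j+1)/2}(u(\sigma)-u(s))\|_{L^2}^2\,d\sigma$ on the left.

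The key steps, in order: (1) apply It\^o's formula to get, for each $j$, an identity of the form $\phi_j(t)+2\nu E\int_s^t\|A^{(j+1)/2}(u-u(s))\|^2 = 2E\int_s^t\bigl(A^{j/2}(\nu\Delta u+f),A^{j/2}(u-u(s))\bigr) + E\int_s^t\|A^{j/2}B(\sigma,u(\sigma))\|_{\cK}^2\,d\sigma$, the stochastic integral having zero expectation by the martingale property; (2) bound the deterministic forcing terms: the $\nu\Delta u$ piece is controlled by $\|A^{(j+2)/2}u\|_{L^\infty(L^2)}$ paired against $A^{j/2}(u-u(s))$ via Cauchy--Schwarz and absorbed partly into the dissipation, while the $f$ piece uses $\|f\|_{L^\infty(H^{j-1})}$ and Poincar\'e (this is where $C_0$ enters); (3) bound the It\^o-isometry term using the linear growth assumption \eqref{eq2.6a}, $\|A^{j/2}B(\sigma,u(\sigma))\|_{\cK}\le C_T(1+\|A^{j/2}u(\sigma)\|_{L^2})$, which gives a contribution $O(|t-s|)$ plus a term proportional to $E\int_s^t\|A^{j/2}u\|^2$ — here I would further split $A^{j/2}u(\sigma)=A^{j/2}(u(\sigma)-u(s))+A^{j/2}u(s)$ so that the first part produces the $2C_TE\int_s^t\phi_j$ needed for Gronwall and the second is again $O(|t-s|)$ times the $L^\infty$ norm; (4) collect everything into $\phi_j(t)+\nu E\int_s^t\|A^{(j+1)/2}(u-u(s))\|^2\le C|t-s|+2C_T\int_s^t\phi_j(\xi)\,d\xi$ and apply Gr\"onwall's inequality to conclude, reading off the constants $C_1,C_2,C_3$ in the stated form with the factor $(1+2C_TT)e^{2C_TT}$.

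The main obstacle I anticipate is handling the It\^o term carefully enough to produce exactly the Gr\"onwall structure with the right constants rather than a crude bound, and in particular justifying the use of It\^o's formula at the level of the Stokes-operator norms $\|A^{j/2}\cdot\|_{L^2}$ for $j=1,2$ — this requires the regularity hypotheses in the theorem ($u\in L^2(\Omega,L^\infty(\cV\cap[H^3]^d))$ and the path-continuity of $\nabla u_j, Au, A^{3/2}u$) so that all the quantities appearing are finite and the stochastic integrals are genuine martingales. A secondary technical point is the commutation of $A^{j/2}$ with the various terms and the fact that $B$ maps into spaces on which $A^{j/2}$ acts boundedly with the growth bound \eqref{eq2.6a} for $i=0,1,2$; one must also be mildly careful that the cross term $(A^{j/2}\nu\Delta u, A^{j/2}(u-u(s)))$ is treated by Young's inequality so that no net dissipation is lost (the coefficient $2\nu$ in front of $E[\|A^{(j+2)/2}u\|_{L^\infty}^2]$ in the constants suggests exactly such an absorption). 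Once these points are in place, the three estimates follow by the same computation with $j=0,1,2$ respectively.
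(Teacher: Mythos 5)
Your proposal follows essentially the same route as the paper: apply It\^o's formula to $\|A^{j/2}(u(\cdot)-u(s))\|_{L^2}^2$ for $j=0,1,2$ in the divergence-free setting (so the pressure term drops out), kill the stochastic integral by taking expectations (martingale property), use the Lipschitz/growth hypotheses on $B$ for the It\^o-correction term, and close with Gronwall to get the factor $(1+2C_TT)e^{2C_TT}$. Two bookkeeping slips to fix when writing it out: the viscous drift must be treated by the symmetric splitting $\bigl(A^{(j+1)/2}u(\xi),A^{(j+1)/2}(u(\xi)-u(s))\bigr)$ with $u(\xi)=(u(\xi)-u(s))+u(s)$ (which simultaneously produces the dissipation on the left and a frozen cross term controlled by $\|A^{(j+1)/2}u(s)\|_{L^2}$), rather than pairing $A^{j/2}(\nu\Delta u)$ directly against $A^{j/2}(u-u(s))$ as you indicate, since the latter would require $\|A^{(j+2)/2}u\|_{L^2}$ (for $j=2$ this is $H^4$ regularity, beyond the hypotheses) and would not reproduce the stated constants; moreover the Poincar\'e constant $C_0$ enters by bounding $\|A^{j/2}u(s)\|_{L^2}\le C_0\|A^{(j+1)/2}u(s)\|_{L^2}$ in the It\^o-correction term coming from the noise, not through the $f$ term.
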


%{\color{blue}
%Since similar proofs were already given in \cite{BCP12, CP12}, we skip the proof to save space and refer the
%interested reader to \cite{BCP12, CP12, Feng_Qiu18} for a detailed proof.}

\begin{proof}
We first note that estimate \eqref{eq2.20a} was obtained in \cite{BCP12, CP12} using
	the semigroup approach, the results of \eqref{eq2.20b}--\eqref{eq2.20c} seem new. Below we
	give a proof for each estimate using the It\^{o}'s formula approach.

{\em Step 1:} For a fix $s\in(0,T]$, by the definition of weak solutions we obtain
\begin{align*}
\bigl(u(t)-u(s),v\bigr) +\nu\int_s^t a(u,v)\,d\xi
=\int_s^t(f,v)\,d\xi+\Bigl(\int_s^t B(\xi,u)\,dW(\xi),v\Bigr)
\end{align*}
for any $v\in \cV_0$. Now apply It\^o's formula to $\Phi(u(t)):=\|u(t)-u(s)\|^2_{L^2}$ to get
\begin{align}\label{eq2.22}
&\|u(t)-u(s)\|^2_{L^2}+2\nu\int_s^t \bigl(\nabla u(\xi),\nabla(u(\xi)-u(s)) \bigr)\,d\xi \\
&\qquad
=2\int_s^t \bigl(f(\xi),u(\xi)-u(s)\bigr)\,d\xi
+2 \int_s^t\Bigl(B(\xi,u(\xi))\,dW(\xi),u(\xi)-u(s)\Bigr) \nonumber\\
&\qquad\qquad
+\int_s^t\|B(\xi,u(\xi))\|^2_{\cK}\,d\xi. \nonumber
\end{align}
Then we have
\begin{align}\label{eq2.23}
&\|u(t)-u(s)\|^2_{L^2}+2\nu\int_s^t \|\nabla u(\xi)-u(s)\bigr|_{L^2}^2\,d\xi\\\nonumber
&\qquad
=2\nu\int_s^t \bigl(\nabla u(s),\nabla(u(\xi)-u(s))\bigr)\,d\xi
+2\int_s^t \bigl(f(\xi),u(\xi)-u(s)\bigr)\,d\xi\\ \nonumber
&\qquad\qquad
+\int_s^t \|B(\xi,u(\xi))\|^2_{\cK}\,d\xi+2 \int_s^t\Bigl(B(\xi,u(\xi))\,dW(\xi),u(\xi)-u(s)\Bigr) \\ \nonumber
&\qquad
=:I_1+I_2+I_3+I_4.
\end{align}
We then bound each $I_i$ for $i=1,2,3,4$ separately below.

To bound $I_1$, by Schwarz and Young's inequality we get
\begin{align}\label{eq2.24}
2\nu\int_s^t \bigl(\nabla u(s), &\nabla(u(\xi)-u(s))\bigr)\,d\xi
\leq 2\nu\int_s^t \|\nabla u(s)\|_{L^2} \|\nabla(u(\xi)-u(s)))\|_{L^2}\,d\xi \\ \nonumber
&\leq 2\nu\Bigl(\int_s^t \|\nabla (u(\xi)-u(s))\|_{L^2}^2\,d\xi\Bigr)^{\frac12}
\Bigl(\int_s^t \|\nabla u(s)\|_{L^2}^2\,d\xi\Bigr)^{\frac12}\\ \nonumber
%&\leq 2\nu\Bigl(\varepsilon \int_s^t |\nabla (u(\xi)-u(s))|^2\, d\xi\Bigr)
%+\frac{1}{4\varepsilon}\Bigl(\int_s^t |\nabla u(s)|^2\,d\xi\Bigr)\\ \nonumber
&\leq \frac{\nu}{2}\int_s^t \|\nabla(u(\xi)-u(s))\|^2_{L^2}\,d\xi +2\nu\|\nabla u(s)\|_{L^2}^2\, |t-s|.
\end{align}

Similarly, $I_2$ can be bounded as follows:
\begin{align}\label{eq2.25}
2\int_s^t \bigl(f(\xi), &u(\xi)-u(s)\bigr)\, d\xi
\leq 2\Bigl(\varepsilon\int_s^t \|\nab(u(\xi)-u(s))\|_{L^2}^2\,d\xi \\ \nonumber
&\hskip 1.2in
+\frac{1}{4\varepsilon}\int_s^t \|f(\xi)\|_{H^{-1}}^2\,d\xi\Bigr)\\ \nonumber
&\leq 2\Bigl(\varepsilon \int_s^t \|\nabla (u(\xi)-u(s))\|_{L^2}^2\, d\xi
+\frac{1}{4\varepsilon}\int_s^t \|f(\xi)\|_{H^{-1}}^2\, d\xi\Bigr)\\ \nonumber
&\leq\frac{\nu}{2}\int_s^t \|\nabla(u(\xi)-u(s))\|_{L^2}^2\, d\xi +\frac{2}{\nu} \|f\|^2_{L^{\infty}(H^{-1})}\,|t-s|,
\end{align}
where we set $\varepsilon=\frac{\nu}{4}$.

To bound $I_3$, we have
\begin{align}\label{eq2.26}
&\int_s^t \|B(\xi,u(\xi))\|^2_{\cK}\,d\xi
=\int_s^t \|B(\xi,u(\xi))-B(s,u(s))+B(s,u(s))\|^2_{\cK}\,d\xi\\  \nonumber
&\qquad \leq 2\int_s^t\|B(\xi,u(\xi))-B(s,u(s))\|^2_{\cK}\, d\xi+2\int_s^t\|B(s,u(s))\|^2_{\cK}\, d\xi\\ \nonumber
&\qquad \leq 2C_T\int_s^t\|u(\xi)-u(s)\|^2_{L^2}\,d\xi+2C_T\int_s^t\|u(s)\|^2_{L^2}\, d\xi\\ \nonumber
&\qquad \leq 2C_T\int_s^t\|u(\xi)-u(s)\|^2_{L^2}\,d\xi+2C_T\|u(s)\|^2_{L^2}\,|t-s|.
\end{align}

Finally, on noticing that $I_4$ is a martingale, then we have $E[I_4]=0$. Now combining
\eqref{eq2.24}--\eqref{eq2.26} and taking the expectation we obtain
\begin{align*} %\label{eq2.27}
&E\bigl[\|u(t)-u(s)\|^2_{L^2}\bigr] +\nu E\Bigl[\int_s^t \|\nabla(u(\xi)-u(s))\|_{L^2}^2\,d\xi \Bigr] \\ \nonumber
&\qquad
\leq \Bigl( 2\nu E\bigl[\|\nabla u(s)\|_{L^2}^2\bigr] + 2C_T E\bigl[\|u(s)\|_{L^2}^2\bigr]
+\frac{2}{\nu} E\bigl[\|f\|^2_{L^{\infty}(H^{-1})}\bigr]\Bigr)\,|t-s|\\ \nonumber
&\qquad\qquad
+2C_TE\Bigl[\int_s^t\|u(\xi)-u(s)\|^2_{L^2}\,d\xi\Bigr].
\end{align*}
An application of Gronwall and Poincar\'e inequality yields
\begin{align*} %\label{eq2.28}
&E\bigl[\|u(t)-u(s)\|^2_{L^2}\bigr] +\nu E\Bigl[\int_s^t \|\nabla(u(\xi)-u(s))\|_{L^2}^2\,d\xi \Bigr] \\ \nonumber
&\,\, \leq\Bigl( ( 2\nu+2C_0C_T) E[\|\nabla u\|_{L^\infty(L^2)}^2]
+\frac{2}{\nu}E[\|f\|^2_{L^{\infty}(H^{-1})}]\Bigr)(1+2C_TT)e^{2C_TT}\,|t-s|.
\end{align*}
Hence, \eqref{eq2.20a} holds.

\medskip
{\em Step 2:} To show the second inequality \eqref{eq2.20b},
we apply It\^o's formula to $\Psi(u(t)):=\|\nabla (u(t)-u(s))\|^2_{L^2}$ to get
\begin{align}\label{eq2.30}
&\|\nabla (u(t)-u(s))\|^2_{L^2}+2\nu\int_s^t \bigl(A u(\xi)-u(s),A(u(\xi)-u(s))\bigr)\,d\xi\\\nonumber
&\quad
=2\nu\int_s^t \bigl(A u(s),A(u(\xi)-u(s))\bigr)\,d\xi
+2\int_s^t \bigl(f(\xi),A(u(\xi)-u(s))\bigr)\,d\xi \\ \nonumber
&\qquad\quad
+\int_s^t\|\nabla B(\xi, u(\xi))\|^2_{\cK}\,d\xi
+2\int_s^t \bigl(\nabla B(\xi,u(\xi))\,dW(\xi),\nabla (u(\xi)-u(s))\bigr) \\
\nonumber
&\quad =:I_5+I_6+I_7+I_8.
\end{align}
We now bound each $I_i$ for $i=5,6,7,8$ as follows,

To bound $I_5$, we use Schwarz and Young's inequality to get
\begin{align}\label{eq2.31}
2\nu\int_s^t \bigl(Au(s),A(u(\xi) &-u(s))\bigr)\,d\xi
\leq 2\nu \int_s^t \|A (u(\xi)-u(s))\|_{L^2} \|A u(s)\|_{L^2}\,d\xi\\ \nonumber
&\leq \frac{\nu}{2}\int_s^t \|A(u(\xi)-u(s))\|^2_{L^2}\,d\xi +2\nu\|Au(s)\|_{L^2}^2\, |t-s|.
\end{align}
Similarly, we have
\begin{align}\label{eq2.32}
2\int_s^t \bigl(f(\xi),A (u(\xi) &-u(s))\bigr)\,d\xi
\leq 2\int_s^t \|f(\xi)\|_{L^2} \|A (u(\xi)-u(s))\|_{L^2}\,d\xi \\  \nonumber
&\leq \frac{\nu}{2}\int_s^t \|A(u(\xi)-u(s))\|_{L^2}^2\,d\xi
+\frac{2}{\nu}\|f\|_{L^{\infty}(L^2)}^2\,|t-s|.
\end{align}
\begin{align}\label{eq2.33}
\int_s^t \|\nabla B(\xi, &u(\xi))\|^2_{\cK}\,d\xi
=\int_s^t \|\nabla B(\xi,u(\xi)) \pm \nabla B(s,u(s))\|^2_{\cK}\,d\xi \\ \nonumber
&\leq 2C_T\int_s^t \|\nabla(u(\xi)-u(s))\|^2_{L^2}\,d\xi
+2C_T\|\nabla u(s)\|^2_{L^2}\,|t-s|.
%&\leq 2C_T  \|\nabla(u(t)-u(s))\|^2_{L^2}\, |t-s|
%+2C_T \|u(s)\|^2_{L^2}\,|t-s|.
\end{align}
Here we have used the Lipschitz continuity of $B(\cdot, u)$ with $j=1$.

Since $I_8$ it is a martingale, then $E[I_8]=0$. Combining \eqref{eq2.31}--\eqref{eq2.33} and
taking the expectation we get
\begin{align*} %\label{eq2.34}
&E\bigl[\|\nabla (u(t)-u(s))\|^2_{L^2}\bigr]
+\nu E\Bigl[ \int_s^t \|A(u(\xi)-u(s))\|_{L^2}^2\,d\xi\Bigr]\\ \nonumber
&\qquad
\leq \Bigl( 2\nu E\bigl[\|Au(s)\|^2_{L^2}\bigr]+\frac{2}{\nu} E\bigl[\|f\|^2_{L^\infty(L^2)}\bigr]
+2C_TE\bigl[\|\nabla u(s)\|_{L^2}^2\bigr]\Bigr)\, |t-s|\\ \nonumber
&\qquad\quad
+2C_T E\Bigl[\int_s^t \|\nabla (u(t)-u(s))\|^2_{L^2}\, d\xi \Bigr].
\end{align*}
It follows from  Gronwall inequality  that
\begin{align*} %\label{eq2.35}
&E\bigl[\|\nabla(u(t)-u(s))\|^2_{L^2}\bigr]
+\nu E\Bigl[\int_s^t \|A(u(\xi)-u(s))\|_{L^2}^2\,d\xi \Bigr]\\ \nonumber
&\quad
\leq \Bigl((2\nu+2C_0C_T)E\bigl[\|Au\|_{L^\infty(L^2)}^2\bigr]
+\frac{2}{\nu}E\bigl[\|f\|^2_{L^\infty(L^2)}\bigr] \Bigr) (1+2C_TT)e^{2C_TT}\,|t-s|.
\end{align*}
Hence, \eqref{eq2.20b} holds.

\medskip
{\em Step 3:} To show the second inequality \eqref{eq2.20c},
we apply It\^o's formula to $\Psi(u(t)):=\|A (u(t)-u(s))\|^2_{L^2}$ to get
\begin{align}\label{eq2.331}
&\|A (u(t)-u(s))\|^2_{L^2}+2\nu\int_s^t \bigl(A^{\frac{3}{2}} u(\xi)-u(s),A^{\frac{3}{2}}(u(\xi)-u(s))\bigr)\,d\xi\\\nonumber
&\quad
=2\nu\int_s^t \bigl(A^{\frac{3}{2}} u(s),A^{\frac{3}{2}}(u(\xi)-u(s))\bigr)\,d\xi
+2\int_s^t \bigl(f(\xi),A^2(u(\xi)-u(s))\bigr)\,d\xi \\ \nonumber
&\qquad\quad
+\int_s^t\|A B(\xi, u(\xi))\|^2_{\cK}\,d\xi
+2\int_s^t \bigl(A B(\xi,u(\xi))\,dW(\xi),A (u(\xi)-u(s))\bigr) \\
\nonumber
&\quad =:I_9+I_{10}+I_{11}+I_{12}.
\end{align}
We now estimate each $I_i$ for $i=9,10,11,12$ as follows,

To estimate $I_9$, we use Schwarz and Young's inequality to get
\begin{align}\label{eq2.332}
&2\nu\int_s^t \bigl(A^{\frac{3}{2}}u(s),A^{\frac{3}{2}}(u(\xi) -u(s))\bigr)\,d\xi \\ \nonumber
&\qquad \leq 2\nu \int_s^t \|A^{\frac{3}{2}} (u(\xi)-u(s))\|_{L^2} \|A^{\frac{3}{2}} u(s)\|_{L^2}\,d\xi\\ \nonumber
&\qquad \leq \frac{\nu}{2}\int_s^t \|A^{\frac{3}{2}}(u(\xi)-u(s))\|^2_{L^2}\,d\xi +2\nu\|A^{\frac{3}{2}}u(s))\|_{L^2}^2\, |t-s|.
\end{align}
Similarly, we have
\begin{align}\label{eq2.333}
2\int_s^t \bigl(f(\xi),A^{2}(u(\xi) &-u(s))\bigr)\,d\xi
\leq 2\int_s^t \|\nabla f(\xi)\|_{L^2} \|A^{\frac{3}{2}} (u(\xi)-u(s))\|_{L^2}\,d\xi \\  \nonumber
&\leq \frac{\nu}{2}\int_s^t \|A^{\frac{3}{2}}(u(\xi)-u(s))\|_{L^2}^2\,d\xi
+\frac{2}{\nu}\|f\|_{L^{\infty}(H^1)}^2\,|t-s|.
\end{align}
\begin{align}\label{eq2.334}
\int_s^t \|A B(\xi, &u(\xi))\|^2_{\cK}\,d\xi
=\int_s^t \|A B(\xi,u(\xi)) \pm A B(s,u(s))\|^2_{\cK}\,d\xi \\ \nonumber
&\leq 2C_T\int_s^t \|A(u(\xi)-u(s))\|^2_{L^2}\,d\xi
+2C_T\|\nabla^2u(s)\|^2_{L^2}\,|t-s|.
%&\leq 2C_T  \|\nabla(u(t)-u(s))\|^2_{L^2}\, |t-s|
%+2C_T \|u(s)\|^2_{L^2}\,|t-s|.
\end{align}
Here we have used the Lipschitz continuity of $B(\cdot, u)$ with $j=2$.

Since $I_{12}$ it is a martingale, then $E[I_{12}]=0$. Combining \eqref{eq2.332}--\eqref{eq2.334} and
taking the expectation we get
\begin{align*} %\label{eq2.34}
&E\bigl[\|A (u(t)-u(s))\|^2_{L^2}\bigr]
+\nu E\Bigl[ \int_s^t \|A^{\frac{3}{2}}(u(\xi)-u(s))\|_{L^2}^2\,d\xi\Bigr]\\ \nonumber
&\qquad
\leq \Bigl( 2\nu E\bigl[\|A^{\frac{3}{2}}u(s)\|^2_{L^2}\bigr]+\frac{2}{\nu} E\bigl[\|f\|^2_{L^\infty(H^1)}\bigr]
+2C_TE\bigl[\|\nabla^2u(s)\|^2_{L^2}\bigr]\Bigr)\, |t-s|\\ \nonumber
&\qquad\quad
+2C_T E\Bigl[\int_s^t \|A^{\frac{3}{2}}(u(t)-u(s))\|^2_{L^2}\, d\xi \Bigr].
\end{align*}
It follows from  Gronwall inequality  that
\begin{align*} %\label{eq2.35}
&E\bigl[\|A(u(t)-u(s))\|^2_{L^2}\bigr]
+\nu E\Bigl[\int_s^t \|A^{\frac{3}{2}}(u(\xi)-u(s))\|_{L^2}^2\,d\xi \Bigr]\\ \nonumber
&\quad
\leq \Bigl((2\nu+2C_0C_T)E\bigl[\|A^{\frac{3}{2}}u\|_{L^\infty(L^2)}^2\bigr]
+\frac{2}{\nu}E\bigl[\|f\|^2_{L^\infty(H^1)}\bigr] \Bigr)  (1+2C_TT)e^{2C_TT}\,|t-s|.
\end{align*}
Hence, \eqref{eq2.20c} holds. The proof is complete.
\end{proof}

\begin{remark}
Clearly, \eqref{eq2.20a} --\eqref{eq2.20c} hold under different regularity assumptions on
the solution $u$. We note that \eqref{eq2.20a}--\eqref{eq2.20c} is needed for our error analysis to be given
in the next two sections.
\end{remark}

%%%%%%%%%
\section{Semi-discretization in time}\label{sec-3}
In this section we analyze the Euler-Maruyama time discretization scheme for the mixed formulation
\eqref{eq2.8}--\eqref{eq2.12}. The goal is to derive optimal order error estimates in strong norms
for both the velocity and pressure approximations. The results of this section will also serve as a building
block for us to establish error estimates in strong norms for our fully discrete mixed finite element
methods to be given in the next section.

Let $N$ be a positive integer, $k:=\frac{T}{N}$ and $t_n=nk$ for $n=0,1,\cdots, N$. Set $u^0:=u_0$,
then the Euler-Maruyama scheme for \eqref{eq2.8} is defined as seeking Hilbert space valued
discrete processes $\{(u^n,p^n)\in L^2(\Omega,\cV)\times L^2(\Omega,\cW); 0\leq n\leq N\}$ such that
\begin{subequations}\label{eq3.1}
\begin{align}\label{eq3.1a}
&\bigl(u^{n+1},v\bigr) +k\,\nu a\bigl(u^{n+1}, v\bigr) +k\, b\bigl(v,p^{n+1}\bigr)\\ \nonumber
&\qquad\qquad
=(u^{n},v)+\int^{t_{n+1}}_{t_n} (f,v)\,ds + \bigl(B(t_{n},u^n)\Delta W_{n+1},v\bigr) \quad\forall v\in \cV,\,\, a.s.\\
&b\bigl(u^{n+1},q\bigr) =0 \qquad\forall q\in \cW,\,\, a.s.  \label{eq3.1b}
\end{align}
\end{subequations}
Where $\Delta W_{n+1}:=W(t_{n+1})-W(t_n)\thicksim N(0,kQ)$.

It is easy to see that \eqref{eq3.1} is a weak formulation of a random Stokes system
for $(u^{n+1}, p^{n+1})$. The well-posedness of this system immediately follows from
a generalized Lax-Milgram Theorem (also called Banach-Ne\v{c}as-Babu\v{s}ka Theorem,
cf. \cite{Ern_Guermond04}).

The next lemma establishes some stability estimates for the
discrete processes $\{(u^n,p^n)\in L^2(\Omega,\cV)\times L^2(\Omega,\cW); 0\leq n\leq N\}$.

\begin{lemma}\label{lem3.1}
The discrete processes $\{(u^n,p^n)\in L^2(\Omega,\cV)\times L^2(\Omega,\cW); 0\leq n\leq N\}$
defined by \eqref{eq3.1} satisfy
\begin{align}\label{eq3.3}
\max_{1\leq n\leq N} E\bigl[\|u^n\|^2_{L^2}\bigr] &+E\Bigl[\sum^N_{n=1}\|u^n-u^{n-1}\|^2_{L^2}\Bigr]
+\nu E\Bigl[k\sum^N_{n=1}\|\nabla u^n\|^2_{L^2}\Bigr] \\ \nonumber
&\leq C_4\Bigl\{E\bigl[\|u_0\|_{L^2}^2\bigr] +E\bigl[\|f\|^2_{L^2(0,T;H^{-1})}\bigr]\Bigr\},\\
\label{eq3.5}
E\Bigl[\Bigl\|k\sum_{n=1}^N p^n\Bigr\|_{L^2}^2\Bigr]
&\leq C_5\Bigl\{E\bigl[\|u_0\|_{L^2}^2\bigr] +E\bigl[\|f\|^2_{L^2(0,T;H^{-1})}\bigr]\Bigr\}.
\end{align}
Moreover, for $ i=1,2,3,4$
\begin{align}\label{eq3.4}
&\max_{1\leq n\leq N} E\Bigl[\|A^{\frac{i}{2}} u^n\|^2_{L^2} \Bigr]
+ E\Bigl[ \sum^N_{n=1}\|A^{\frac{i}{2}} (u^n-u^{n-1})\|^2_{L^2} \Bigr]
  +\nu  k E\Bigl[ \sum^N_{n=1}\|A^{\frac{i+1}{2}} u^n\|^2_{L^2}\Bigr] \\ \nonumber
&\hskip 1.3in \leq C_5\Bigl\{E\bigl[\|A^{\frac{i}{2}}u_0\|^2_{L^2}\bigr]
+E\bigl[\|f\|^2_{L^2(0,T;L^2)}\bigr] \Bigr\}, \\
&E\Bigl[k\sum_{n=1}^N \|\nabla p^n\|_{L^2}^2\Bigr]
\leq \frac{C_5}{k}\Bigl\{E\bigl[\|u_0\|_{L^2}^2\bigr] +E\bigl[\|f\|^2_{L^2(0,T;L^2)}\bigr] \Bigr\}
+C_5 \nu E\bigl[ \|u_0\|^2_{H^1} \bigr],
\label{eq3.4a}
\end{align}
where $C_4=C(C_0, C_T,D,T)$ and $C_5= C(C_0, C_T,D,T,\beta)$ are positive constants
	independent of $k$.
\end{lemma}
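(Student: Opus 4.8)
The plan is to derive the four estimates in the natural order by energy arguments applied to the discrete scheme \eqref{eq3.1}, mimicking the continuous-in-time It\^o-formula computations of Theorem \ref{thm2.2} but replacing It\^o's formula by the algebraic identity $(a,a-b) = \tfrac12\|a\|^2 - \tfrac12\|b\|^2 + \tfrac12\|a-b\|^2$. First I would prove \eqref{eq3.3}: take $v = u^{n+1}$ in \eqref{eq3.1a}, use \eqref{eq3.1b} to kill the pressure term $k\,b(u^{n+1},p^{n+1})$, apply the identity above to $(u^{n+1},u^{n+1}-u^n)$, bound the source term by Young's inequality and $\|f\|_{H^{-1}}$, and handle the noise term. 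The key point for the noise is that, conditionally on $\mathcal{F}_{t_n}$, $\Delta W_{n+1}$ is independent of $u^n$ and centered, so $E\bigl[(B(t_n,u^n)\Delta W_{n+1}, u^n)\bigr]=0$; the cross term is split as $(B(t_n,u^n)\Delta W_{n+1}, u^{n+1}-u^n) + (B(t_n,u^n)\Delta W_{n+1}, u^n)$, the second expectation vanishes, and the first is absorbed into $\tfrac12\|u^{n+1}-u^n\|^2$ at the cost of $E[\|B(t_n,u^n)\Delta W_n\|^2]\le C k\,E[\|B(t_n,u^n)\|_{\cK}^2]$, which by the linear growth \eqref{eq2.6a} is $\le Ck(1+E[\|u^n\|_{L^2}^2])$. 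Summing over $n$, taking expectations, and invoking the discrete Gronwall inequality and the Poincar\'e inequality yields \eqref{eq3.3}.

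Next, \eqref{eq3.5} is where the stochastic inf-sup condition of Lemma \ref{lem2.1} enters, used in the nonstandard way announced in the abstract. Summing \eqref{eq3.1a} over $n$ from $1$ to $m$ with a fixed test function $v$ telescopes the $(u^{n+1}-u^n,v)$ terms, producing
\begin{equation*}
b\Bigl(v, k\sum_{n=1}^{m} p^{n}\Bigr)
= (u_0,v) - (u^{m},v) - k\nu\sum_{n=1}^m a(u^n,v) + \sum_{n=1}^m\int_{t_{n-1}}^{t_n}(f,v)\,ds + \sum_{n=1}^m\bigl(B(t_{n-1},u^{n-1})\Delta W_n, v\bigr).
\end{equation*}
Taking $m=N$, dividing by $\|v\|_{\mathbb{V}}$, taking the supremum over $v\in\mathbb{V}$ after taking expectations, and applying \eqref{eq2.15} to $q = k\sum_{n=1}^N p^n\in\mathbb{W}$ bounds $\beta\,E[\|k\sum p^n\|_{L^2}] $ (or its square, after Cauchy--Schwarz on the noise sum via It\^o's isometry for the Riemann sum of martingale increments) by the right-hand side quantities already controlled by \eqref{eq3.3}. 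The main obstacle here is handling the martingale sum $\sum_n (B(t_{n-1},u^{n-1})\Delta W_n, v)$ inside the $\sup_v E[\cdot]$: one must move the expectation inside and use the discrete It\^o isometry $E\bigl[\|\sum_n B(t_{n-1},u^{n-1})\Delta W_n\|_{L^2}^2\bigr] = \sum_n E[\|B(t_{n-1},u^{n-1})\|_{\cK}^2]\,k$ before applying Cauchy--Schwarz, so that the bound is in terms of $E[\|f\|^2]$ and $E[\|u_0\|^2]$ rather than an uncontrolled quantity.

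The higher-order estimates \eqref{eq3.4} and \eqref{eq3.4a} follow the same template but with $v$ replaced by $A^i u^{n+1}$ (equivalently, testing against powers of the Stokes operator, which is legitimate since $A$ commutes with the projection onto $\cV_0$ and $u^{n+1}$ is discretely divergence-free), using now the Lipschitz/growth bounds \eqref{eq2.6}--\eqref{eq2.6a} for $j,i\ge 1$ to control $\|A^{i/2}B(t_n,u^n)\|_{\cK}$; induction on $i=1,2,3,4$ lets each step reuse the previous bound. For \eqref{eq3.4a} one tests \eqref{eq3.1a} with a $v$ realizing $\|\nabla p^{n+1}\|$ (again via the inf-sup condition applied pointwise in $\omega$ to the stationary Stokes estimate assumed at the end of Section \ref{sec-2.1}), isolates $k\|\nabla p^{n+1}\|_{L^2}$ against $\|u^{n+1}-u^n\|_{L^2} + k\nu\|Au^{n+1}\|_{L^2} + \|\int f\| + \|B(t_n,u^n)\Delta W_{n+1}\|$, squares, sums, and divides by $k$; the solitary power $k^{-1}$ in \eqref{eq3.4a} is exactly the price of dividing by $k$ after summing $N=T/k$ terms each of size $O(k)$ coming from $\sum_n\|u^n-u^{n-1}\|^2$ and the noise increments, which is the source of the $k^{-1/2}$ factors that propagate into \eqref{eq1.4}--\eqref{eq1.5b}. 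I expect the bookkeeping in \eqref{eq3.4a} — keeping the $k^{-1}$ scaling sharp while not losing a further power of $k$ — to be the most delicate part; the treatment of the martingale terms inside suprema for \eqref{eq3.5} is the conceptually new ingredient.
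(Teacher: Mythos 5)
Your treatment of \eqref{eq3.3}, \eqref{eq3.4} and \eqref{eq3.5} is essentially the paper's own route: the paper invokes \cite{CHP12} for \eqref{eq3.3} and for \eqref{eq3.4} with $i=1$, notes that $i=2,3,4$ follow by testing with $v=A^i u^{n+1}$ (legitimate since $u^{n+1}$ is exactly solenoidal in the semi-discrete scheme), and proves \eqref{eq3.5} precisely as you describe: telescope \eqref{eq3.1a}, take expectations, apply the stochastic inf-sup condition \eqref{eq2.15} of Lemma \ref{lem2.1}, and control the martingale sum by the discrete It\^o isometry before Cauchy--Schwarz, then close with \eqref{eq3.3} and the linear growth \eqref{eq2.6a}.

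The step that would fail as written is your mechanism for \eqref{eq3.4a}. An inf-sup condition (whether \eqref{eq2.15} or its pointwise-in-$\omega$ version) only bounds the $L^2$ norm of the pressure against the $H^1$ norm of the velocity test function; there is no $v\in\cV$ ``realizing $\|\nabla p^{n+1}\|_{L^2}$'' through $b(v,p^{n+1})/\|v\|_{\cV}$. To pay only $\|v\|_{L^2}$ for the test function --- which is what your list of residual terms $\|u^{n+1}-u^n\|_{L^2}+k\nu\|Au^{n+1}\|_{L^2}+\|f^{n+1}\|_{L^2}+\|B(t_n,u^n)\Delta W_{n+1}\|_{L^2}$ implicitly requires --- you must first know that $p^{n+1}\in H^1(D)$ a.s.\ and that $k\nabla p^{n+1}$ is an $L^2$ function. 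The paper obtains this by a de Rham-type argument, not by inf-sup: the residual $G^n:=u^{n-1}-u^n-\nu k Au^n+f^n+B(t_{n-1},u^{n-1})\Delta W_n$ belongs to $L^2(\Omega,[L^2(D)]^d)$ (here \eqref{eq3.4}, guaranteeing $Au^n\in L^2$, is needed) and annihilates $L^2(\Omega,\cV_0)$, so Propositions 1.1 and 1.2 of \cite{Temam01} give that $G^n$ has a scalar potential, i.e.\ $p^n\in L^2(\Omega,H^1(D))$ solves $k(\div v,p^n)=\langle G^n,v\rangle$ with $k\|\nabla p^n\|_{L^2(\Omega,L^2)}\le C\|G^n\|_{L^2(\Omega,L^2)}$; adaptedness of $u^n$, $Au^n$ (hence $p^n$) is handled as in \cite{DD2004}. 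Once this identification replaces the inf-sup step, your remaining bookkeeping --- square, sum over $n$, bound the pieces with \eqref{eq3.3}, \eqref{eq3.4} and the It\^o isometry for the noise increment, and divide by $k$ --- coincides with the paper's computation and produces exactly the stated $k^{-1}$ scaling.
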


\begin{proof}
Since the proofs of \eqref{eq3.3} and \eqref{eq3.4} with $i=1$ were obtained in \cite{CHP12}, and
\eqref{eq3.4} for $i=2,3,4$ can be derived similarly by taking $v=A^iu^{n+1}$ in \eqref{eq3.1a},  below
we only give proofs for \eqref{eq3.5} and \eqref{eq3.4a}.

Applying the summation operator $\sum_{n=1}^N$ to both sides of \eqref{eq3.1a}  (after lowering
the super-index by one) we get
 \begin{align*}
b\Bigl( v, k\sum_{n=1}^N p^{n}\Bigr) &= \bigl(u^{0},v\bigr) - (u^N,v) +\int^{T}_{0} (f,v)\,ds
\nonumber \\
&\quad
-k\,\nu \sum_{n=1}^N a\bigl(u^{n}, v\bigr) + \sum_{n=1}^N \bigl(B(t_{n-1},u^{n-1})\Delta W_{n},v\bigr) \quad\forall v\in \cV,\,\, a.s\\
\end{align*}
Taking the expectation and using Schwarz inequality and Poinc\'{a}re-Freidrich's inequality on the right-hand side we obtain
\begin{align*}
E\Bigl[b\Bigl( v, k\sum_{n=1}^N p^{n}\Bigr)\Bigr]
&\leq C_0\Bigl( E\bigl[\|u^{0}\|_{L^2}^2 +\|u^{N}\|_{L^2}^2 +\|f\|_{L^1(0,T;H^{-1})}^2\bigr]\Bigr)^\frac12 \bigl(E\bigl[\|\nabla v\|_{L^2}^2\bigr]\bigr)^\frac12 \\
&\quad
+\nu \Bigl(E\Bigl[k\sum_{n=1}^N \|\nabla u^{n}\|^2_{L^2}\Bigr]\Bigr)^{\frac{1}{2}}
\bigl(E\bigl[\|\nabla v\|^2_{L^2}\bigr]\bigr)^{\frac{1}{2}}\\
&\quad
+\Bigl(E\Bigl[\Bigl\|\sum_{n=1}^N \int_{t_{n-1}}^{t_n} B(t_{n-1},u^{n-1})\, dW(s)\Bigr\|_{L^2}^2\Bigr] \Bigr)^\frac12 \bigl(E\bigl[\|v\|_{L^2}^2\bigr]\Bigr)^\frac12.
\end{align*}
Then by the inf-sup condition, It\^{o}'s isometry and \eqref{eq3.3}, we get
\begin{align*}
%&\beta \Bigl(E\Bigl[ \Bigl\|k\,\sum_{n=1}^{N} p^n\Bigr\|^2_{L^2}\Bigr]\Bigr)^{\frac{1}{2}}
&\beta \Bigl\| k\,\sum_{n=1}^{N} p^n \Bigr\|_{\mW}
\leq C_0\Bigl( E\bigl[\|u^{0}\|_{L^2}^2 +\|u^{N}\|_{L^2}^2 +\|f\|_{L^1(0,T;H^{-1})}^2\bigr]\Bigr)^\frac12\\
&\qquad\qquad
+\nu \Bigl(E\Bigl[k\sum_{n=1}^N \|\nabla u^{n}\|^2_{L^2}\Bigr]\Bigr)^{\frac{1}{2}}
+C_0\Bigl(E\Bigl[\sum_{n=1}^N \int_{t_{n-1}}^{t_n} \bigl\|B(t_{n-1},u^{n-1})\bigr\|_{\cK}^2\, ds\Bigr] \Bigr)^\frac12\\
&\quad
\leq C \Bigl\{ E\bigl[\|u_0\|^2_{L^2}\bigr] +E\bigl[\|f\|^2_{L^2(0,T;H^{-1})}\bigr] \Bigr\}^{\frac{1}{2}}
 +C_0 \Bigl(E\Bigl[k\sum_{n=1}^N C^2_T \bigl(1+\|u^{n-1}\|_{L^2}\bigr)^2 \Bigr] \Bigr)^{\frac{1}{2}}\\
&\quad
\leq  \beta\sqrt{C_5}E\Bigl\{\bigl[\|u_0\|_{L^2}^2\bigr]
	+E\bigl[\|f\|^2_{L^2(0,T;H^{-1})}\bigr]\Bigr\}^\frac12. 
\end{align*}
 Hence, \eqref{eq3.5} holds.

To prove \eqref{eq3.4a}, we first notice that \eqref{eq3.1a} can be rewritten as
\begin{align}
  & k\bigl( \div v, p^n \bigr) = k\, b\bigl(v,p^{n}\bigr) = \bigl\langle G^n, v\bigr\rangle   \qquad \forall v\in \mV \quad \mbox{a.s.}, \label{eq3.100}
\end{align}
where
\begin{align*}
 G^n  := u^{n-1}-u^n -\nu k Au^n + f^n + B(t_{n-1}, u^{n-1}) \Delta W_n, \quad 
 f^n :=\int_{t_{n-1}}^{t_n} f(s)\,ds.
\end{align*}
It can be shown that $u^n$ and $Au^{n}$ are $\cF_{t_{n}}$-measurable as done in \cite{DD2004}.
By the assumptions on $u^n$ and $f$, it is easy to check that $G^n\in L^2(\Omega, L^2(D))$
and $ \bigl\langle G^n, v\bigr\rangle =0$ for all $v\in L^2(\Omega,\cV_0)$.  Then it follows from
Propositions 1.1 and 1.2 of \cite{Temam01} that $G^n$ has a (scalar) potential $p^n\in L^2(\Omega, H^1(D))$,  that is,
 $p^n$ is a solution of \eqref{eq3.100}.  Moreover, there exists some positive constant $C$ such that
\begin{equation} \label{eq3.200}
k \|\nabla p^n\|_{L^2(\Omega,L^2)} \leq C \|G^n\|_{L^2(\Omega,L^2)},  \qquad n=1,2,\cdots, N.
\end{equation}

It now remains to bound the right-hand side of \eqref{eq3.200}.    To this end, first
using the triangle inequity and It\^{o}'s  isometry we get
\begin{align*}
\|G^n\|^2_{L^2(\Omega,L^2)} & \leq 2E\bigl[ \|u^{n}-u^{n-1}\|^2_{L^2} \bigr]
+ 2k^2 \nu^2E\bigl[ \| Au^{n}\|^2_{L^2}\bigr] \\
&\qquad
+ 2E\bigl[ \|f^n\|^2_{L^2}\bigr]
+ 2 E\Bigl[ \Bigl\| \int_{t_{n-1}}^{t_n} B(t_{n-1},u^{n-1})\, d W(s)  \Bigr\|^2_{L^2} \Bigr] \\
& \leq 2E\bigl[ \|u^{n}-u^{n-1}\|^2_{L^2} \bigr]
+ 2k^2\nu^2 E\bigl[ \| Au^{n}\|^2_{L^2}\bigr] \\
&\qquad
+ 2E\bigl[ \|f^n\|^2_{L^2}\bigr]
+ 4k C_T^2 E\bigl[ 1+\| u^{n-1}\|^2_{L^2} \bigr].
\end{align*}
Then applying the operator  $\sum_{n=1}^N$ to \eqref{eq3.200}  (after squaring it),
using the above inequality, \eqref{eq3.3} and \eqref{eq3.4} we obtain
\begin{align*}
 k\, E\Bigl[ k\sum_{n=1}^N\|\nabla p^{n}\|^2_{L^2} \bigr]
&\leq 2C^2 E\Bigl[\sum_{n=1}^N\|u^{n}-u^{n-1}\|^2_{L^2}\Bigr]
+ 2C^2\nu^2 k E\Bigl[ k\sum_{n=1}^N\| Au^{n}\|^2_{L^2} \Bigr] \\
&\quad+ 2C^2 E\Bigl[ \sum_{n=1}^N \|f^n\|^2_{L^2}\Bigr]
+4C^2 C_T^2 E\Bigl[ k\sum_{n=1}^N \bigl( 1+ \|u^{n-1}\|^2_{L^2}\bigr) \Bigr]\\
&\leq C_5\Bigl\{E\bigl[\|u_0\|^2_{L^2}\bigr]+E\bigl[ \|f\|^2_{L^2(0,T;L^2)} \bigr]
+\nu k E\bigl[ \|u_0\|^2_{H^1} \bigr]  \Bigr\}.
\end{align*}
Hence,  \eqref{eq3.4a}  holds and the proof is complete.
\end{proof}

\begin{remark}
	(a) We note that \eqref{eq3.5}  and \eqref{eq3.4a} are different bounds for the pressure.
	While the time average estimate  \eqref{eq3.5} allows for a uniform bound in the discretization parameter $k>0$,
	the non-uniform estimate \eqref{eq3.4a} reflects the subtle interplay of the
	Lagrange multiplier $p$ with the (non-solenoidal) noise on the right-hand side of \eqref{eq3.1a}.
	
	(b) We note that stability estimates similar to \eqref{eq3.3} and \eqref{eq3.4} {with $i=1$  }
	were also established for the projection method in \cite{CHP12}.
    Here we extend these results to the mixed method.
	
	(c)	We emphasize that the stability estimates \eqref{eq3.4} and \eqref{eq3.4a} will be crucially
	used in the error analysis for our fully discrete finite element method in Section \ref{sec-4}.

    (d) We note that higher regularities of $p^n$ are required if one wants to obtain higher
    order space error estimates, which can be proved in the similar fashion.
\end{remark}

The first main result of this section is stated in the following theorem.

\begin{theorem}\label{thm3.2}
The time discrete velocity process $\{u^n; 0\leq n\leq N\}$ defined by scheme \eqref{eq3.1} satisfies the
following error estimate:
\begin{align}\label{eq3.6}
&\max_{1\leq n\leq N} E\bigl[\|u(t_n)-u^n\|^2_{L^2}\bigr]
+\nu E\Bigl[k\sum_{n=1}^{N} \|\nabla (u(t_n)-u^n)\|^2_{L^2}\Bigr]\leq C_{6} k,\\\label{eq3.6a}&
\max_{1\leq n\leq N} E\bigl[\|\nabla (u(t_n)-u^n)\|^2_{L^2}\bigr]
+\nu E\Bigl[k\sum_{n=1}^{N} \|A(u(t_n)-u^n)\|^2_{L^2}\Bigr]\leq C_{7} k,\\\label{eq3.6b}&
\max_{1\leq n\leq N} E\bigl[\|A(u(t_n)-u^n)\|^2_{L^2}\bigr]
+\nu E\Bigl[k\sum_{n=1}^{N} \|A^{\frac{3}{2}}(u(t_n)-u^n)\|^2_{L^2}\Bigr]\leq C_{8} k
\end{align}
for some positive constants $C_6=C(C_T,C_1,f,D,T)$,
$C_7=C(C_T,C_2,f,D,T)$, $C_{8}=C(C_T,C_3,f,D,T)$ independent of $k$.
\end{theorem}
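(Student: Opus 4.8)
The plan is to run a discrete energy argument on the error $e^n:=u(t_n)-u^n$, carried out successively in the $L^2$-, $H^1$- and $H^2$-type norms by testing with $e^{n+1}$, $Ae^{n+1}$ and $A^2e^{n+1}$. The key structural point is that the pressure never enters: $\div u(t_n)=0$ a.s., while \eqref{eq3.1b} together with $u^{n+1}\in\cV$ forces $\div u^{n+1}=0$ a.s., so $e^n\in\cV_0$ (and $Ae^n,A^2e^n$ are again divergence free, by the regularity of $u$ and by \eqref{eq3.4}); hence $b(\cdot,\int_0^tp)$ and $k\,b(\cdot,p^{n+1})$ drop out of the test identity. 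Subtracting \eqref{eq3.1a} (index lowered by one) from the increment of \eqref{eq2.8a} over $[t_n,t_{n+1}]$ and restricting to $v\in\cV_0$ yields the error relation
\begin{multline*}
(e^{n+1}-e^n,v)+k\nu\,a(e^{n+1},v)=\nu\!\int_{t_n}^{t_{n+1}}\!a\bigl(u(t_{n+1})-u(s),v\bigr)\,ds\\
+\Bigl(\int_{t_n}^{t_{n+1}}\!\bigl[B(s,u(s))-B(t_n,u^n)\bigr]\,dW(s),\,v\Bigr),
\end{multline*}
using $\int_{t_n}^{t_{n+1}}B(t_n,u^n)\,dW(s)=B(t_n,u^n)\Delta W_{n+1}$.

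For \eqref{eq3.6} I take $v=e^{n+1}$ and use $(e^{n+1}-e^n,e^{n+1})=\tfrac12(\|e^{n+1}\|_{L^2}^2-\|e^n\|_{L^2}^2+\|e^{n+1}-e^n\|_{L^2}^2)$ and $a(e^{n+1},e^{n+1})=\|\nabla e^{n+1}\|_{L^2}^2$. Young's inequality bounds the consistency term by $\tfrac{\nu k}{4}\|\nabla e^{n+1}\|_{L^2}^2+\nu\int_{t_n}^{t_{n+1}}\|\nabla(u(t_{n+1})-u(s))\|_{L^2}^2\,ds$; after taking expectations and summing in $n$, the last term is $O(k)$ because the \emph{pointwise}-in-time estimate $E[\|\nabla(u(t_{n+1})-u(s))\|_{L^2}^2]\le C_2|t_{n+1}-s|$ from \eqref{eq2.20b} makes each of the $N=T/k$ intervals contribute $O(k^2)$. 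The stochastic term needs care since $e^{n+1}$ is not $\cF_{t_n}$-measurable: splitting $e^{n+1}=e^n+(e^{n+1}-e^n)$, the integrand $B(s,u(s))-B(t_n,u^n)$ is $\cF_s$-adapted (using that $u^n,Au^n$ are $\cF_{t_n}$-measurable), so the It\^o integral has zero $\cF_{t_n}$-conditional mean and the piece paired with the $\cF_{t_n}$-measurable $e^n$ has zero expectation; the piece paired with $e^{n+1}-e^n$ is at most $\tfrac14\|e^{n+1}-e^n\|_{L^2}^2$ (absorbed on the left) plus $\|\int_{t_n}^{t_{n+1}}[\cdots]\,dW\|_{L^2}^2$, whose expectation by It\^o's isometry \eqref{eq2.2} is $E\int_{t_n}^{t_{n+1}}\|B(s,u(s))-B(t_n,u^n)\|_{\cK}^2\,ds$, bounded via \eqref{eq2.6} (with $j=0$, inserting $\pm B(t_n,u(t_n))$) and \eqref{eq2.20a} by $C(k\,E[\|e^n\|_{L^2}^2]+k^2)$. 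Summing in $n$ and applying the discrete Gronwall inequality (with $e^0=0$) gives \eqref{eq3.6}.

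Estimates \eqref{eq3.6a} and \eqref{eq3.6b} are obtained by the same three-step recipe with $v=Ae^{n+1}$ and $v=A^2e^{n+1}$, all pairings read through the self-adjoint fractional powers of $A$, so that the left-hand side produces $\tfrac12(\|\nabla e^{n+1}\|^2-\|\nabla e^n\|^2+\|\nabla(e^{n+1}-e^n)\|^2)+k\nu\|Ae^{n+1}\|^2$ (resp.\ the same with $A$ for $\nabla$ and $A^{3/2}$ for $A$). The consistency term is absorbed using the pointwise $A$-in-time bound in \eqref{eq2.20c} (resp.\ the pointwise $A^{3/2}$ analogue, obtained by carrying the It\^o-formula proof of Theorem~\ref{thm2.2} one order further). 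For the stochastic term one notes that $\int_{t_n}^{t_{n+1}}[B(s,u(s))-B(t_n,u^n)]\,dW(s)\in[H^1(D)]^d$ (resp.\ $[H^2(D)]^d$) because $\nabla B(\cdot,u)$ and $\nabla^2B(\cdot,u)$ are themselves $\cK$-valued by \eqref{eq2.6a}, pairs its gradient (resp.\ Hessian) against $\nabla(e^{n+1}-e^n)$ (resp.\ $A(e^{n+1}-e^n)$) via the Leray projection, and controls the $\cK$-norms of the increments of $B$ by \eqref{eq2.6} with $j=1$ (resp.\ $j=2$) and the time increments of $u$ by \eqref{eq2.20b} (resp.\ \eqref{eq2.20c}); the stability bounds \eqref{eq3.4} keep all terms finite. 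Discrete Gronwall again closes the estimates.

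The main difficulty is the interaction of the martingale term with the implicit, hence non-adapted, error $e^{n+1}$: expectations cannot be taken directly, and the forced splitting $e^{n+1}=e^n+(e^{n+1}-e^n)$ means one must genuinely control $\|e^{n+1}-e^n\|_{L^2}$ and, at the higher levels, $\|\nabla(e^{n+1}-e^n)\|_{L^2}$ and $\|A(e^{n+1}-e^n)\|_{L^2}$ --- which is exactly why these differences are carried on the left of \eqref{eq3.3}--\eqref{eq3.4} and why the polarization identity is used in place of a raw Young's inequality. A secondary but genuinely delicate point is to track, level by level, \emph{which} spatial norm of the time-increment of $u$ appears in the consistency term and to match it to the pointwise-in-time H\"older bound available from Theorem~\ref{thm2.2}, so that the consistency contribution sums to $O(k)$ rather than $O(1)$; this is where the pointwise (as opposed to time-integrated) form of \eqref{eq2.20b}--\eqref{eq2.20c} is essential, and where \eqref{eq3.6b} requires its natural one-order-higher companion.
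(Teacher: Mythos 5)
Your proof follows essentially the same route as the paper's: eliminate the pressure by working with divergence-free test functions, test the error equation successively with $e^{n+1}$, $Ae^{n+1}$, $A^2e^{n+1}$, absorb the consistency term via Young's inequality and the H\"older-in-time estimates of Theorem \ref{thm2.2}, split the noise term as $e^{n+1}=e^n+(e^{n+1}-e^n)$ so that the adapted part vanishes by the martingale property while the remainder is controlled by It\^o's isometry and \eqref{eq2.6}, and close with the discrete Gronwall inequality. Your bookkeeping is in fact slightly more careful than the paper's at two points (using the pointwise H\"older bounds so each step contributes $O(k^2)$ and the sum is genuinely $O(k)$, and noting explicitly that \eqref{eq3.6b} needs the $A^{\frac{3}{2}}$-level pointwise companion of Theorem \ref{thm2.2}), but the argument is otherwise the same.
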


\begin{proof}
We first like point out that since at each time step $t_n$, the velocity field $u^n$ satisfy the
divergence-free condition $\div u^n =0$
$\mP$-a.s.,  restricting the test function $v\in \cV_0$ in \eqref{eq3.1a} then eliminates the pressure term.
The desired estimate \eqref{eq3.6} follows from a similar estimate of  \cite{CP12}.
%{\color{blue} We refer
%the interested reader to \cite{CP12,Feng_Qiu18} for the detail proof. }
Below we give a proof for each of \eqref{eq3.6}--\eqref{eq3.6b} for the sake of completeness.
	
It follows from \eqref{eq2.8} that the velocity field $u$ satisfies $\mP$-a.s.
\begin{align}\label{eq3.7}
\big( u(t_{n+1}),v\bigr) &-\bigl(u(t_n),v\bigr) +\nu\int_{t_n}^{t_{n+1}} a\bigl(u(s),v\bigr)\,ds\\
\nonumber
&=\int_{t_n}^{t_{n+1}} \bigl(f(s),v\bigr) \,ds +\Bigl(\int_{t_n}^{t_{n+1}} B(s,u(s))\,dW(s),v\Bigr)
\quad\forall  v\in \cV_0.
\end{align}
Let $e^n:=u(t_n)-u^n$, subtracting \eqref{eq3.1a} from \eqref{eq3.7} yields
\begin{align}\label{eq3.8}
\bigl(e^{n+1},v\bigr)-\bigl(e^{n},v\bigr) &+\nu\int_{t_n}^{t_{n+1}} a\bigl(u(s)-u^{n+1}, v\bigr)\,ds\\
&=\Bigl(\int_{t_n}^{t_{n+1}}\bigl( B(s,u(s))-B(t_n,u^n)\bigr)\, dW(s),v\Bigr)
\qquad\forall  v\in \cV_0. \nonumber
\end{align}
Note that $e^{n+1}\in \cV_0$. Choosing $v=e^{n+1}$ in \eqref{eq3.8} and using the identity
$(a,a-b)=\frac{1}{2}(\|a\|^2-\|b\|^2+\|a-b\|^2)$ (for any two $d$-vectors $a$ and $b$) we get
\begin{align}\label{eq3.9}
\frac{1}{2} \Bigl[\|e^{n+1}\|^2_{L^2} &-\|e^{n}\|^2_{L^2} +\|e^{n+1}-e^{n}\|^2_{L^2} \Bigr]
+\nu k\|\nabla e^{n+1}\|^2_{L^2}\\ \nonumber
&=\nu\int_{t_n}^{t_{n+1}} \bigl(\nabla (u(t_{n+1})- u(s)), \nabla e^{n+1}\bigr)\,ds \\ \nonumber
&\qquad\quad
+\Bigl(\int_{t_n}^{t_{n+1}}\bigl(B(s,u(s))-B(t_n,u^n)\bigr)\,dW(s),e^{n+1}\Bigr).
\end{align}

We now bound the first term on the right-hand using \eqref{eq3.9} as follows:
\begin{align}\label{eq3.10}
\nu\int_{t_n}^{t_{n+1}} &\bigl(\nabla (u(s)- u(t_{n+1})),\nabla e^{n+1}\bigr)\,ds\\
&\leq \frac{k\nu}{4}\|\nabla e^{n+1}\|^2_{L^2}
+\nu\int_{t_n}^{t_{n+1}} \|\nabla (u(s)- u(t_{n+1}))\|^2_{L^2}\,ds \nonumber\\
&\leq\frac{\nu k}{4}\|\nabla e^{n+1}\|^2_{L^2}+C_1k.  \nonumber
\end{align}
Substituting \eqref{eq3.10} into \eqref{eq3.9}, summing over $n$ and taking
the expectation we get
\begin{align}\label{eq3.11}
&\frac12 \max_{0\leq n\leq N-1} E\bigl[\|e^{n+1}\|^2_{L^2}\bigr]
+\frac12\sum_{n=0}^{N-1} E\Bigl[\|e^{n+1}-e^{n}\|^2_{L^2}
+2\nu k\|\nabla e^{n+1}\|^2_{L^2}\Bigr] \\ \nonumber
&\qquad
\leq \sum_{n=0}^{N-1} E\Bigl[\Bigl(\int_{t_n}^{t_{n+1}} \bigl(B(s,u(s))-B(t_n,u^n)\bigr)\,dW(s) ,e^{n+1}\Bigr)\Bigr]
+ C_1k.
\end{align}

It remains to bound the the first term on the right-hand of \eqref{eq3.11}. To this end, we write
\begin{align}\label{eq3.13}
&\sum^{N-1}_{n=0} E\Bigl[ \Bigl(\int_{t_n}^{t_{n+1}} \bigl(B(s,u(s))-B(t_n,u^n)\bigr)\,dW(s),e^{n+1}\Bigr)\Bigr]\\
\nonumber
&\qquad
=E\Bigl[\,\sum^{N-1}_{n=0} \Bigl(\int_{t_n}^{t_{n+1}} \bigl(B(s,u(s))-B(t_n,u^n)\bigr)\,dW(s),e^{n+1}-e^n\bigl)\Bigr)\Bigr]\\ \nonumber
&\qquad\qquad
+E\Bigl[\,\sum^{N-1}_{n=0} \Bigl(\int_{t_n}^{t_{n+1}} \bigl(B(s,u(s))-B(t_n,u^n)\bigr)\,dW(s),e^{n}\bigr)\Bigr)\Bigr]
=:I+II.
\end{align}
By Schwarz inequality, \eqref{ito_int}, \eqref{eq2.2} and Theorem \ref{thm2.2} we get
\begin{align}\label{eq3.14}
|I|&\leq E\Bigl[\,\sum^{N-1}_{n=0} \Bigl|\Bigl(\int_{t_n}^{t_{n+1}}\bigl(B(s,u(s))-B(t_n,u^n)\bigr)\,dW(s),e^{n+1}-e^n\Bigr)\Bigr|\Bigr]\\\nonumber
&\leq \sum^{N-1}_{n=0} E\Bigl[\Bigl\|\int_{t_n}^{t_{n+1}} \bigl(B(s,u(s))-B(t_n,u^n)\bigr)\,dW(s)\Bigr\|_{L^2}\|e^{n+1}-e^n\|_{L^2}\Bigr]\\ \nonumber
&\leq \sum^{N-1}_{n=0} E\Bigl[ \int_{t_n}^{t_{n+1}} \|B(s,u(s))-B(t_n,u^n)\|_{\cK}\,ds\,
\|e^{n+1}-e^n\|_{L^2}\Bigr]\\ \nonumber
&\leq \sum^{N-1}_{n=0} E\Bigl[\int_{t_n}^{t_{n+1}} C_T\bigl(k^{\frac12} + \|u(s)- u^n\|_{L^2}\bigr) \, ds\,
\|e^{n+1}-e^n\|_{L^2}\Bigr]\\ \nonumber
&\leq \sum^{N-1}_{n=0} E\Bigl[\int_{t_n}^{t_{n+1}} C_T\bigl(k^{\frac12} +\|u(s)- u(t_n)\|_{L^2}
+\|e^n\|_{L^2} \bigr)\, ds\,\|e^{n+1}-e^n\|_{L^2}\Bigr]\\ \nonumber
&\leq \sum^{N-1}_{n=0} \Bigl[c_3 \Bigl(k^{\frac52} + k^2E\bigl[\|e^n\|^2_{L^2}\bigr]\Bigr)
+\frac14 \|e^{n+1}-e^n\|^2_{L^2}\Bigr]\\ \nonumber
&\leq c_3\Bigl(T k^{\frac32} + k^2\sum^{N-1}_{n=0} E\bigl[\|e^{n}\|^2_{L^2}\bigr]\Bigr)
+\frac14 \sum^{N-1}_{n=0} E\bigl[\|e^{n+1}-e^n\|^2_{L^2}\bigr].
\end{align}
Where $c_3>0$ is a constant which is independent of $k$.

On the other hand,  by a well-known property of martingales we have $II=0$.
Combining \eqref{eq3.11}--\eqref{eq3.14} we get
\begin{align}\label{eq3.18}
\max_{0\leq n\leq N-1} E\bigl[\|e^{n+1}\|^2_{L^2}\bigr]
+\nu k\sum_{n=0}^{N-1} E\bigr[\|\nabla e^{n+1}\|^2_{L^2}\bigr]
\leq c_4 k\sum_{n=0}^{N-1} E\bigl[\|e^n\|^2_{L^2}\bigr] +c_5 k,
\end{align}
%where $c_5= 4T(c_3 k+c_4)$ and $c_6= 4 C_1 + 4T(c_3 k^{\frac12} +c_4)$.
where $c_4= 4Tc_3 k$ and $c_5= 4 C_1 + 4T c_3 k^{\frac12}$.

Appying Gronwall's inequality in \eqref{eq3.18} yields
\begin{align}\label{eq3.19}
\max_{0\leq n\leq N-1} E\bigl[\|e^{n+1}\|^2_{L^2}\bigr]
+\nu k\sum_{n=0}^{N-1} E\bigr[\|\nabla e^{n+1}\|^2_{L^2}\bigr]
\leq c_5\exp(c_4T)\, k.
\end{align}
Then, \eqref{eq3.6a} holds with $C_4=c_5 \exp(c_4T)$.

Noting that $Ae^{n+1}\in \cV_0$, and setting $v=Ae^{n+1}$ in \eqref{eq3.8} yields
\begin{align}\label{eq3.191}
\frac{1}{2} \Bigl[\|\nabla e^{n+1}\|^2_{L^2} &-\|\nabla e^{n}\|^2_{L^2} +\|\nabla (e^{n+1}-e^{n})\|^2_{L^2} \Bigr]
+\nu k\|Ae^{n+1}\|^2_{L^2}\\ \nonumber
&=\nu\int_{t_n}^{t_{n+1}} \bigl(A(u(t_{n+1})- u(s)), A e^{n+1}\bigr)\,ds \\ \nonumber
&\qquad\quad
+\Bigl(\int_{t_n}^{t_{n+1}}\bigl(B(s,u(s))-B(t_n,u^n)\bigr)\,dW(s), Ae^{n+1}\Bigr).
\end{align}
Using \eqref{eq2.20b}, the first term of right hand side in \eqref{eq3.191} can be bounded by
\begin{align}\label{eq3.192}
\nu\int_{t_n}^{t_{n+1}} &\bigl(A (u(s)- u(t_{n+1})),A e^{n+1}\bigr)\,ds\\
&\leq \frac{k\nu}{4}\|A e^{n+1}\|^2_{L^2}
+\nu\int_{t_n}^{t_{n+1}} \|A (u(s)- u(t_{n+1}))\|^2_{L^2}\,ds \nonumber\\
&\leq\frac{\nu k}{4}\|A e^{n+1}\|^2_{L^2}+C_2k.  \nonumber
\end{align}
Substituting \eqref{eq3.192} into \eqref{eq3.191}, summing over $n$ and taking
the expectation we have
\begin{align}\label{eq3.193}
&\frac12 \max_{0\leq n\leq N-1} E\bigl[\|\nabla e^{n+1}\|^2_{L^2}\bigr]
+\frac12\sum_{n=0}^{N-1} E\Bigl[\nabla(\|e^{n+1}-e^{n})\|^2_{L^2}
+2\nu k\|A e^{n+1}\|^2_{L^2}\Bigr] \\ \nonumber
&\qquad
\leq \sum_{n=0}^{N-1} E\Bigl[\Bigl(\int_{t_n}^{t_{n+1}} \bigl(\nabla (B(s,u(s))-B(t_n,u^n))\bigr)\,dW(s), \nabla e^{n+1}\Bigr)\Bigr]
+ C_2k.
\end{align}
It remains to bound the the first term on the right-hand of \eqref{eq3.193}. To this end, we rewrite as follows
\begin{align}\label{eq3.194}
&\sum^{N-1}_{n=0} E\Bigl[ \Bigl(\int_{t_n}^{t_{n+1}} \bigl(\nabla (B(s,u(s))-B(t_n,u^n))\bigr)\,dW(s),\nabla e^{n+1}\Bigr)\Bigr]\\
\nonumber
&\quad
=E\Bigl[\,\sum^{N-1}_{n=0} \Bigl(\int_{t_n}^{t_{n+1}} \bigl(\nabla (B(s,u(s))-B(t_n,u^n))\bigr)\,dW(s),\nabla e^{n+1}-\nabla e^n\bigl)\Bigr)\Bigr]\\ \nonumber
&\qquad\qquad
+E\Bigl[\,\sum^{N-1}_{n=0} \Bigl(\int_{t_n}^{t_{n+1}} \bigl(\nabla (B(s,u(s))-B(t_n,u^n))\bigr)\,dW(s),\nabla e^{n}\bigr)\Bigr)\Bigr]\\ \nonumber
&\quad =:III+IV.
\end{align}
By Schwarz inequality, \eqref{ito_int}, \eqref{eq2.2} and Theorem \ref{thm2.2} we get
\begin{align}\label{eq3.195}
&|III|\leq E\Bigl[\,\sum^{N-1}_{n=0} \Bigl|\Bigl(\int_{t_n}^{t_{n+1}}\bigl(\nabla (B(s,u(s))-B(t_n,u^n))\bigr)\,dW(s),\nabla (e^{n+1}-e^n)\Bigr)\Bigr|\Bigr]\\\nonumber
&\leq \sum^{N-1}_{n=0} E\Bigl[\Bigl\|\int_{t_n}^{t_{n+1}} \bigl(\nabla (B(s,u(s))-B(t_n,u^n))\bigr)\,dW(s)\Bigr\|_{L^2}\|\nabla (e^{n+1}-e^n)\|_{L^2}\Bigr]\\ \nonumber
&\leq \sum^{N-1}_{n=0} E\Bigl[ \int_{t_n}^{t_{n+1}} \|\nabla (B(s,u(s))-B(t_n,u^n))\|_{\cK}\,ds\,
\|\nabla (e^{n+1}-e^n)\|_{L^2}\Bigr]\\ \nonumber
&\leq \sum^{N-1}_{n=0} E\Bigl[\int_{t_n}^{t_{n+1}} C_T\bigl(k^{\frac12} +\|\nabla (u(s)- u(t_n))\|_{L^2}
+\|\nabla e^n\|_{L^2} \bigr)\, ds  \|\nabla (e^{n+1}-e^n)\|_{L^2}\Bigr]\\ \nonumber
&\leq \sum^{N-1}_{n=0} \Bigl[c_6 \Bigl(k^{\frac52} + k^2E\bigl[\|\nabla e^n\|^2_{L^2}\bigr]\Bigr)
+\frac14 \|\nabla (e^{n+1}-e^n)\|^2_{L^2}\Bigr]\\ \nonumber
&\leq c_6\Bigl(T k^{\frac32} + k^2\sum^{N-1}_{n=0} E\bigl[\|\nabla e^{n}\|^2_{L^2}\bigr]\Bigr)
+\frac14 \sum^{N-1}_{n=0} E\bigl[\|\nabla (e^{n+1}-e^n)\|^2_{L^2}\bigr].
\end{align}

By a well-known property of martingales we obtain $IV=0$.
Combining \eqref{eq3.193}--\eqref{eq3.195} we have
\begin{align}\label{eq3.196}
\max_{0\leq n\leq N-1} E\bigl[\|\nabla e^{n+1}\|^2_{L^2}\bigr]
&+\nu k\sum_{n=0}^{N-1} E\bigr[\|A e^{n+1}\|^2_{L^2}\bigr] \\
&\leq c_7 k\sum_{n=0}^{N-1} E\bigl[\|\nabla e^n\|^2_{L^2}\bigr] +c_8 k, \nonumber
\end{align}
where $c_7= 4Tc_6 k$ and $c_8= 4 C_2 + 4T c_6 k^{\frac12}$.

Applying the Gronwall's inequality to \eqref{eq3.196} yields
\begin{align}\label{eq3.197}
\max_{0\leq n\leq N-1} E\bigl[\|e^{n+1}\|^2_{L^2}\bigr]
+\nu k\sum_{n=0}^{N-1} E\bigr[\|\nabla e^{n+1}\|^2_{L^2}\bigr]
\leq c_8\exp(c_7T)\, k.
\end{align}
Hence, \eqref{eq3.6a} holds with $C_7=c_8 \exp(c_7T)$.

To prove \eqref{eq3.6b}, since $A^2e^{n+1}\in \cV_0$, we can set $v=A^2e^{n+1}$ in \eqref{eq3.8} and get
\begin{align}\label{eq3.198}
\frac{1}{2} \Bigl[\|A e^{n+1}\|^2_{L^2} &-\|A e^{n}\|^2_{L^2} +\|A (e^{n+1}-e^{n})\|^2_{L^2} \Bigr]
+\nu k\|A^{\frac{3}{2}}e^{n+1}\|^2_{L^2}\\ \nonumber
&=\nu\int_{t_n}^{t_{n+1}} \bigl(A^{\frac{3}{2}}(u(t_{n+1})- u(s)), A^{\frac{3}{2}} e^{n+1}\bigr)\,ds \\ \nonumber
&\qquad
+\Bigl(\int_{t_n}^{t_{n+1}}\bigl(A(B(s,u(s))-B(t_n,u^n))\bigr)\,dW(s), Ae^{n+1}\Bigr).
\end{align}
 Using \eqref{eq2.20c}, the first term on the right-hand side of \eqref{eq3.198} can be bounded as
\begin{align}\label{eq3.199}
\nu\int_{t_n}^{t_{n+1}} &\bigl(A^{\frac{3}{2}} (u(s)- u(t_{n+1})),A^{\frac{3}{2}} e^{n+1}\bigr)\,ds\\
&\leq \frac{k\nu}{4}\|A^{\frac{3}{2}} e^{n+1}\|^2_{L^2}
+\nu\int_{t_n}^{t_{n+1}} \|A^{\frac{3}{2}} (u(s)- u(t_{n+1}))\|^2_{L^2}\,ds \nonumber\\
&\leq\frac{\nu k}{4}\|A^{\frac{3}{2}} e^{n+1}\|^2_{L^2}+C_3k.  \nonumber
\end{align}
Substituting \eqref{eq3.199} into \eqref{eq3.198}, summing over $n$ and taking
the expectation we get
\begin{align}\label{eq3.20}
&\frac12 \max_{0\leq n\leq N-1} E\bigl[\|A e^{n+1}\|^2_{L^2}\bigr]
+\frac12\sum_{n=0}^{N-1} E\Bigl[A(\|e^{n+1}-e^{n})\|^2_{L^2}
+2\nu k\|A^{\frac{3}{2}} e^{n+1}\|^2_{L^2}\Bigr] \\ \nonumber
&\qquad
\leq \sum_{n=0}^{N-1} E\Bigl[\Bigl(\int_{t_n}^{t_{n+1}} \bigl(A (B(s,u(s))-B(t_n,u^n))\bigr)\,dW(s), A e^{n+1}\Bigr)\Bigr]
+ C_3k.
\end{align}

It remains to bound the the first term on the right-hand of \eqref{eq3.20}. To this end, we write
\begin{align}\label{eq3.20a}
&\sum^{N-1}_{n=0} E\Bigl[ \Bigl(\int_{t_n}^{t_{n+1}} \bigl(A(B(s,u(s))-B(t_n,u^n))\bigr)\,dW(s),A e^{n+1}\Bigr)\Bigr]\\
\nonumber
&\quad
=E\Bigl[\,\sum^{N-1}_{n=0} \Bigl(\int_{t_n}^{t_{n+1}} \bigl(A (B(s,u(s))-B(t_n,u^n))\bigr)\,dW(s),A e^{n+1}-A e^n\bigl)\Bigr)\Bigr]\\ \nonumber
&\qquad\qquad
+E\Bigl[\,\sum^{N-1}_{n=0} \Bigl(\int_{t_n}^{t_{n+1}} \bigl(A (B(s,u(s))-B(t_n,u^n))\bigr)\,dW(s),A e^{n}\bigr)\Bigr)\Bigr] \\ \nonumber
&\quad =:V+VI.
\end{align}
By Schwarz inequality, \eqref{ito_int}, \eqref{eq2.2} and Theorem \ref{thm2.2} we get
\begin{align}\label{eq3.20b}
|V|&\leq E\Bigl[\,\sum^{N-1}_{n=0} \Bigl|\Bigl(\int_{t_n}^{t_{n+1}}\bigl(A (B(s,u(s))-B(t_n,u^n))\bigr)\,dW(s),A (e^{n+1}-e^n)\Bigr)\Bigr|\Bigr]\\\nonumber
&\leq \sum^{N-1}_{n=0} E\Bigl[\Bigl\|\int_{t_n}^{t_{n+1}} \bigl(A (B(s,u(s))-B(t_n,u^n))\bigr)\,dW(s)\Bigr\|_{L^2}\|A (e^{n+1}-e^n)\|_{L^2}\Bigr]\\ \nonumber
&\leq \sum^{N-1}_{n=0} E\Bigl[ \int_{t_n}^{t_{n+1}} \|A(B(s,u(s))-B(t_n,u^n))\|_{\cK}\,ds\,
\|A (e^{n+1}-e^n)\|_{L^2}\Bigr]\\ \nonumber
%&\leq \sum^{N-1}_{n=0} E\Bigl[\int_{t_n}^{t_{n+1}} C_T\bigl(k^{\frac12} + \|A(u(s)- u^n)\|_{L^2}\bigr) \, ds\,
%\|A(e^{n+1}-e^n)\|_{L^2}\Bigr]\\ \nonumber
&\leq \sum^{N-1}_{n=0} E\Bigl[\int_{t_n}^{t_{n+1}} C_T\bigl(k^{\frac12} +\|A (u(s)- u(t_n))\|_{L^2}
+\|Ae^n\|_{L^2} \bigr)\, ds\,\|A(e^{n+1}-e^n)\|_{L^2}\Bigr]\\ \nonumber
&\leq \sum^{N-1}_{n=0} \Bigl[c_9 \Bigl(k^{\frac52} + k^2E\bigl[\|A e^n\|^2_{L^2}\bigr]\Bigr)
+\frac14 \|A (e^{n+1}-e^n)\|^2_{L^2}\Bigr]\\ \nonumber
&\leq c_9\Bigl(T k^{\frac32} + k^2\sum^{N-1}_{n=0} E\bigl[\|A e^{n}\|^2_{L^2}\bigr]\Bigr)
+\frac14 \sum^{N-1}_{n=0} E\bigl[\|A (e^{n+1}-e^n)\|^2_{L^2}\bigr].
\end{align}
By a well-known property of martingales we get $VI=0$. Combining \eqref{eq3.20}--\eqref{eq3.20b} yields
\begin{align}\label{eq3.20c}
\max_{0\leq n\leq N-1} E\bigl[\|\nabla e^{n+1}\|^2_{L^2}\bigr]
&+\nu k\sum_{n=0}^{N-1} E\bigr[\|A e^{n+1}\|^2_{L^2}\bigr] \\
&\leq c_{10} k\sum_{n=0}^{N-1} E\bigl[\|\nabla e^n\|^2_{L^2}\bigr] +c_{11} k,  \nonumber
\end{align}
where $c_{10}= 4Tc_9 k$ and $c_{11}= 4 C_3 + 4T c_9 k^{\frac12}$.

Finally, it follows from \eqref{eq3.20c} and Gronwall's inequality that
\begin{align}\label{eq3.20d}
\max_{0\leq n\leq N-1} E\bigl[\|Ae^{n+1}\|^2_{L^2}\bigr]
+\nu k\sum_{n=0}^{N-1} E\bigr[\|A^{\frac{3}{2}} e^{n+1}\|^2_{L^2}\bigr]
\leq c_{11}\exp(c_{10}T)\, k.
\end{align}
The proof is completed after setting $C_8=c_{11}\exp(c_{10}T)$.
\end{proof}

Next, we state the second main result of this section which gives an error estimate for the
pressure approximation, such an estimate  has not been known before.

\begin{theorem}\label{thm3.3}
Let $\{p^n; 1\leq n\leq N\}$ be the pressure approximation defined by scheme \eqref{eq3.1}.
Then there holds for $m=1,2,\cdots, N$
\begin{align}\label{eq3.21}
E\Bigl[\Bigl\|\int_0^{t_m}p(s)\,ds-k\sum^m_{n=1}p^n \Bigr\|_{L^2} \Bigr]\leq C_9 k^{\frac12}.
\end{align}
where $C_9=\beta^{-1} (\nu C_1+C_6)^{\frac12} T^{\frac12}$.

\end{theorem}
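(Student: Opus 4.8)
The plan is to derive the pressure error from the velocity error estimate \eqref{eq3.6} by exploiting the stochastic inf-sup condition of Lemma~\ref{lem2.1} in an accumulated-in-time form. First I would subtract the time-integrated weak formulation \eqref{eq2.8a} summed up to $t_m$ from the accumulated discrete scheme $k\sum_{n=1}^m$ of \eqref{eq3.1a}, testing against an arbitrary $v\in\mV$ (equivalently $v\in\cV$ a.s.). This produces an identity of the form
\begin{align*}
b\Bigl(v,\int_0^{t_m}p(s)\,ds-k\sum_{n=1}^m p^n\Bigr)
&=-\bigl(u(t_m)-u^m,v\bigr)
-\nu\int_0^{t_m}\!\bigl(\nabla(u(s)-\bar u(s)),\nabla v\bigr)ds\\
&\quad+\Bigl(\int_0^{t_m}\!\bigl(B(s,u(s))-\bar B(s)\bigr)dW(s),v\Bigr),
\end{align*}
where $\bar u$ and $\bar B$ denote the appropriate piecewise-constant interpolants of $u^n$ and $B(t_{n-1},u^{n-1})$ used by the scheme (one must be slightly careful lining up the source terms $\int f\,ds$, which cancel exactly). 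The key point is that the left side is exactly $b(v,\cdot)$ applied to the quantity we want to bound, so dividing by $\|v\|_\mV$, taking the supremum over $v\in\mV$, and invoking \eqref{eq2.15} gives
\[
\beta\,\Bigl\|\int_0^{t_m}p(s)\,ds-k\sum_{n=1}^m p^n\Bigr\|_\mW
\le \sup_{v\in\mV}\frac{E[\,\text{RHS}\,]}{\|v\|_\mV}.
\]

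Next I would bound each term on the right. The term $-(u(t_m)-u^m,v)$ is controlled by $\|u(t_m)-u^m\|_{L^2}\|v\|_{L^2}\le C_0\|e^m\|_{L^2}\|\nabla v\|_{L^2}$, contributing $C_0(E[\|e^m\|_{L^2}^2])^{1/2}$ after Cauchy--Schwarz in $\Omega$; by \eqref{eq3.6} this is $O(k^{1/2})$. For the viscous term I would use that $u(s)-\bar u(s)$ over $[t_{n-1},t_n]$ can be written as $(u(s)-u(t_n)) - e^n$ (or a similar split), so $\int_0^{t_m}\|\nabla(u(s)-\bar u(s))\|_{L^2}^2\,ds$ is bounded by $\int_0^{t_m}\|\nabla(u(s)-u(t_n))\|_{L^2}^2\,ds + k\sum_n\|\nabla e^n\|_{L^2}^2$; the first piece is $O(k)$ by \eqref{eq2.20a} (Theorem~\ref{thm2.2}) and the second is $O(k)$ by \eqref{eq3.6}, so after Cauchy--Schwarz in time and $\Omega$ this term is also $O(k^{1/2})$. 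For the stochastic term I would use It\^o's isometry \eqref{eq2.2} together with the H\"older--Lipschitz bound \eqref{eq2.6} on $B$: $E\bigl[\|\int_0^{t_m}(B(s,u(s))-\bar B(s))\,dW(s)\|_{L^2}^2\bigr] = E\bigl[\int_0^{t_m}\|B(s,u(s))-\bar B(s)\|_\cK^2\,ds\bigr]\le C_T^2\,E\bigl[\int_0^{t_m}(|s-t_{n-1}| + \|u(s)-u^{n-1}\|_{L^2}^2)\,ds\bigr]$, and again splitting $u(s)-u^{n-1}$ into the temporal increment plus $e^{n-1}$ and using Theorem~\ref{thm2.2} and \eqref{eq3.6} gives $O(k)$; taking the square root contributes $O(k^{1/2})\cdot(E[\|v\|_{L^2}^2])^{1/2}$. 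Note I should use $E[\,\cdot\,]$ of the stochastic term directly rather than pulling $v$ inside, i.e.\ bound $E[(\cdot,v)]\le (E[\|\cdot\|_{L^2}^2])^{1/2}(E[\|v\|_{L^2}^2])^{1/2}$ by Cauchy--Schwarz on $\Omega$.

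Collecting these bounds, every term is of the form $(\text{const})\cdot k^{1/2}\cdot\|v\|_\mV$, so dividing by $\|v\|_\mV$ and taking the supremum yields $\|\int_0^{t_m}p\,ds-k\sum p^n\|_\mW\le \beta^{-1}C\,k^{1/2}$ uniformly in $m$, which is \eqref{eq3.21}; tracking the constants (the dominant contribution being $\nu C_1$ from the viscous term and $C_6$ from the velocity $L^2$-error, times $T^{1/2}$ from the time integration) gives the stated $C_9=\beta^{-1}(\nu C_1+C_6)^{1/2}T^{1/2}$. The main obstacle I anticipate is the bookkeeping in the first step: aligning the piecewise-constant time discretizations so that the force terms cancel exactly and the remaining viscous and noise terms are expressed in a form where Theorem~\ref{thm2.2} and Theorem~\ref{thm3.2} apply cleanly; in particular one must be careful that the inf-sup condition \eqref{eq2.15} is used with the \emph{expectation} in the numerator, so all estimates on the right-hand side must be carried out under $E[\,\cdot\,]$ and paired with $v$ via Cauchy--Schwarz on the probability space, not pathwise.
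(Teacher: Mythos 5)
Your proposal is correct and follows essentially the same route as the paper: sum the scheme \eqref{eq3.1a} in time, subtract the weak formulation \eqref{eq2.8a} at $t=t_m$ (the $f$-terms cancel exactly, as you note), split the viscous term into the temporal increment $u(s)-u(t_n)$ plus the error $e^n$, and conclude with the stochastic inf-sup condition \eqref{eq2.15}, the H\"older continuity of Theorem \ref{thm2.2} and the velocity error estimate \eqref{eq3.6}, with all pairings against $v$ carried out under the expectation via Cauchy--Schwarz on $\Omega$, exactly as in the paper. The single point of divergence is the noise term: the paper does not estimate it at all, but discards it on the grounds that its expectation vanishes by the martingale property of the It\^o integral, and this is precisely why the stated constant is the clean $C_9=\beta^{-1}(\nu C_1+C_6)^{\frac12}T^{\frac12}$; your treatment via It\^o's isometry \eqref{eq2.2}, the Lipschitz bound \eqref{eq2.6}, Theorem \ref{thm2.2} and \eqref{eq3.6} also yields an $O(k^{\frac12})$ contribution, so the rate \eqref{eq3.21} is unaffected, but it produces an additional $C_T$-dependent term, so your closing claim that constant tracking reproduces the stated $C_9$ exactly is not quite accurate --- you would get a somewhat larger constant. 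In exchange, your estimate has the merit of remaining valid for an arbitrary random test function $v\in\mV$ without any discussion of how $v$ interacts with the noise, whereas the vanishing-expectation step used in the paper requires such care; so the paper's route buys the sharp constant, and yours buys robustness at the price of a slightly worse one.
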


\begin{proof}
The proof is based on the inf-sup property (see Lemma \ref{lem2.1}) and the error estimate
for the velocity approximation established in the previous theorem. To the end, summing
\eqref{eq3.1a} (after lowering the index by one) over $1\leq n\leq m (\leq N)$ we get
\begin{align}\label{eq3.21a}
&\bigl(u^m,v\bigr) +\nu k\sum_{n=1}^m a\bigl(u^n, v\bigr) +k \sum_{n=1}^m b\bigl(v,p^n\bigr)\\ \nonumber
&\qquad
=(u^0,v)+\int^{t_m}_{0} (f,v)\,ds + \sum_{n=1}^m \bigl(B(t_{n-1},u^{n-1}) \Delta W_n,v\bigr)
\quad\forall v\in \cV,\,\, a.s.
\end{align}
Subtracting \eqref{eq2.8a} (with $t=t_m$) from \eqref{eq3.21a} and noting that $u^0=u(0)$ we get
\begin{align}\label{eq3.22}
&b\Bigl(v,k\sum^m_{n=1}p^n -\int_0^{t_m}p(s)\, ds \Bigr)
=\nu \sum^{m-1}_{n=0} \int_{t_n}^{t_{n+1}} a\bigl(u(s)-u^n, v)\, ds\\ \nonumber
&\qquad +\sum^m_{n=1} \Bigl(\int^{t_n}_{t_{n-1}} \bigl(B(t_{n-1},u^{n-1})-B(s,u(s))\bigr)\,dW(s),v\Bigr)
+\bigl(u(t_m)-u^m,v \bigr) \\ \nonumber
&\quad = \nu\sum^{m-1}_{n=0}\int_{t_n}^{t_{n+1}}\bigl(\nabla u(s)-\nabla u(t_n)+  \nabla u(t_n)-\nabla u^n,\nabla v\bigr)\, ds\\
\nonumber
&\qquad +\sum^m_{n=1} \Bigl(\int^{t_n}_{t_{n-1}}\bigl(B(t_{n-1},u^{n-1})-B(s,u(s))\bigr)\,dW(s),v \Bigr)
+\bigl(u(t_m)-u^m,v \bigr).
\end{align}
%Where $e^n:=u(t_n)-u^n$.
 Taking the expectation on both sides of \eqref{eq3.22} and using
 the Poincar\'{e} inequality and \eqref{eq3.6} we obtain
\begin{align}\label{eq3.23}
E\Bigl[b\Bigl(v,k\sum^m_{n=1}p^n &-\int_0^{t_m}p(s)\,ds \Bigr)\Bigr]
\leq (\nu C_1+C_4)^{\frac12} T^{\frac12}k^{\frac12} \bigl(E\bigl[\|\nabla v\|^2_{L^2}\bigr]\bigr)^{\frac12} \\
\nonumber
&+E\Bigl[\Bigl(\sum^m_{n=1}\int^{t_n}_{t_{n-1}}\bigl(B(t_{n-1},u^{n-1})-B(s,u(s))\bigr)\,dW(s),v\Bigr)\Bigr]\\
\nonumber
&\hskip 1.05in
\leq (\nu C_1+ C_6)^{\frac12} T^{\frac12}k^{\frac12} \bigl(E\bigl[\|\nabla v\|^2_{L^2}\bigr]\bigr)^{\frac12},
\end{align}
here we have used a well-known property of martingales to conclude that the stochastic integral term vanishes.

Finally, it follows from \eqref{eq3.23} and the inf-sup condition \eqref{eq2.15} that
\begin{align}\label{eq3.25}
\beta \Bigl\|\int_0^{t_m} p(s)\,ds -k\sum^m_{n=1}p^n \Bigr\|_{\mW}
\leq (\nu C_1+C_6)^{\frac12} T^{\frac12}\,k^{\frac12},
\end{align}
which infers the desired estimate \eqref{eq3.21}.  The proof is complete.
\end{proof}

\begin{remark}
We remark that $O(k^{\frac12})$ order error estimate is optimal for the Euler-Maruyama scheme,
hence both estimates \eqref{eq3.6}--\eqref{eq3.6b}and \eqref{eq3.21} are optimal.
 On the other hand, we note that the norm used to measure the
pressure approximation error is a weaker norm compared to the norm
which is often used to measure the deterministic pressure error.
Our numerical tests given in Section \ref{sec-5} indicate that the
stochastic pressure error may only converge with a much slower rate in such a stronger norm.
\end{remark}

%%%%%%%%%%%%%%%%
\section{Fully discrete mixed finite element discretization}\label{sec-4}
In this section we discretize the Euler-Maruyama time discretization scheme \eqref{eq3.1}
in space using the mixed finite element method. We choose the prototypical
Taylor-Hood mixed finite element method as an example and give a detailed error
analysis for the resulting fully discrete mixed finite element method.

\subsection{Preliminaries}\label{sec-4.1}
We first introduce some discrete space notation. Let $\cT_h$ be a quasi-uniform
triangulation of the polygonal or polyhedral bounded domain $D\subset \R^d$ into triangles
when $d$=2 and tetrahedra when $d=3$, respectively. We define the following two
finite element spaces:
\begin{align*}
V_h &=\bigl\{v_h\in C(\overline{D});\, v_h|_K\in P_{l+1}(K)\,\,\forall K\in  \cT_h\bigr\},\\
W_h &=\bigl\{q_h\in C(\overline{D});\, q_h|_K\in P_l(K)\,\,\forall K\in \cT_h\bigr\},
\end{align*}
where $P_l(K)$ ($l=1,2,3$) denotes the set of polynomials of degree less than or equal to $l$
over the element $K\in \cT_h$.
 %{\color{red} In this paper we only consider the lowest order case$l=1$.}
The Taylor-Hood mixed finite element space pair is
defined by (cf. \cite{Girault_Raviart86,Brezzi_Fortin91})
\[
 \cV_h:=[V_h\cap H^1_0(D)]^d, \qquad \cW_h:=W_h{\tiny } \cap L^2_0(D).
\]
We also set
\[
\mV_h:=L^2(\Omega, \cV_h),\qquad \mW_h:=L^2(\Omega, \cW_h).
\]
Moreover, we define the weakly divergence-free subspace $\cV_{0h}$ of $\cV_{h}$ as
\begin{equation}\label{weakly_divergence-free}
\cV_{0h}=\Bigl\{v_h\in\cV_h;\, b(v_h,q_h)=0 \, \forall q_h \in \cW_h \Bigr\}.
\end{equation}

It is well-known \cite{Brezzi_Fortin91} that the Taylor-Hood mixed finite element space pair
is stable in the sense that they satisfy the following discrete inf-sup condition: there
exists an $h$-independent positive constant $\gamma$ such that
\begin{equation}\label{inf-sup-TH}
\sup_{v_h\in \cW_h} \frac{ b(v_h,q_h)}{\|v_h\|_{\cV}} \geq \gamma \|q_h\|_{\cW}
\qquad \forall q_h\in \cW_h.
\end{equation}
Its stochastic counterpart is given by the following lemma.

\begin{lemma}\label{lem4.1}
There exists a positive constant $\hat{\gamma}$ independent of $h$ such that
\begin{align}\label{eq4.3}
\sup_{v_h\in \mV_h} \frac{E\bigl[b(v_h,q_h)\bigr]}{\|v_h\|_{\mV}} \geq \hat{\gamma}\|q_h\|_{\mW}
\qquad\forall q_h\in \mW_h.
\end{align}
\end{lemma}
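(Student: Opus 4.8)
The plan is to deduce the stochastic estimate \eqref{eq4.3} from the deterministic discrete inf-sup condition \eqref{inf-sup-TH} by applying a \emph{fixed linear} lifting operator pathwise; this is what makes the resulting test function automatically measurable and square-integrable. First I would recast \eqref{inf-sup-TH} in operator form. Since $\cV_h$ and $\cW_h$ are finite-dimensional Hilbert spaces, let $\mathcal{B}_h:\cV_h\to\cW_h$ be the linear operator defined by $(\mathcal{B}_h v_h,q_h)_{\cW}=b(v_h,q_h)$ for all $v_h\in\cV_h$, $q_h\in\cW_h$, and let $\mathcal{B}_h^{*}:\cW_h\to\cV_h$ be its adjoint with respect to the inner products generating $\|\cdot\|_{\cV}$ and $\|\cdot\|_{\cW}$. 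Then $\sup_{v_h\in\cV_h}b(v_h,q_h)/\|v_h\|_{\cV}=\|\mathcal{B}_h^{*}q_h\|_{\cV}$, so \eqref{inf-sup-TH} is equivalent to the bound $\|\mathcal{B}_h^{*}q_h\|_{\cV}\ge\gamma\|q_h\|_{\cW}$ for every $q_h\in\cW_h$; moreover, for the choice $v_h=\mathcal{B}_h^{*}q_h$ one has $b(\mathcal{B}_h^{*}q_h,q_h)=(\mathcal{B}_h\mathcal{B}_h^{*}q_h,q_h)_{\cW}=\|\mathcal{B}_h^{*}q_h\|_{\cV}^{2}$.

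Next I would lift this pathwise. For $q_h\in\mW_h=L^2(\Omega,\cW_h)$, set $v_h(\omega):=\mathcal{B}_h^{*}q_h(\omega)$. Since $\mathcal{B}_h^{*}$ is a fixed (deterministic) bounded linear map between finite-dimensional spaces, $v_h$ is $\cF$-measurable and $\|v_h\|_{\mV}\le\|\mathcal{B}_h^{*}\|\,\|q_h\|_{\mW}<\infty$, so $v_h\in\mV_h$ (and any adaptedness that might be required of $\mV_h$ is inherited from $q_h$, since $\mathcal{B}_h^{*}$ is deterministic). Taking expectations of the pathwise identity $b(v_h(\omega),q_h(\omega))=\|\mathcal{B}_h^{*}q_h(\omega)\|_{\cV}^{2}$ and of the pathwise bound $\|\mathcal{B}_h^{*}q_h(\omega)\|_{\cV}^{2}\ge\gamma^{2}\|q_h(\omega)\|_{\cW}^{2}$ gives
\[
E\bigl[b(v_h,q_h)\bigr]=\|v_h\|_{\mV}^{2}\qquad\text{and}\qquad \|v_h\|_{\mV}^{2}\ge\gamma^{2}\|q_h\|_{\mW}^{2}.
\]
Dividing the first relation by $\|v_h\|_{\mV}$ and inserting the second yields $E[b(v_h,q_h)]/\|v_h\|_{\mV}=\|v_h\|_{\mV}\ge\gamma\|q_h\|_{\mW}$, hence \eqref{eq4.3} holds with $\hat{\gamma}=\gamma$.

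The argument is short, and the only delicate point is the measurability of the selection $\omega\mapsto v_h(\omega)$: an arbitrary pathwise (near-)maximizer of $b(\cdot,q_h(\omega))/\|\cdot\|_{\cV}$ need not depend measurably on $\omega$, so one cannot simply invoke \eqref{inf-sup-TH} for each fixed $\omega$. Using the single linear operator $\mathcal{B}_h^{*}$ — equivalently, any linear lifting furnished by the deterministic mixed finite element theory, cf. \cite{Girault_Raviart86,Brezzi_Fortin91} — resolves this while simultaneously producing the needed square-integrability. A measurable selection theorem could be used instead, but the linear construction is cleaner and self-contained, and it is the exact discrete analogue of the pathwise-linear construction underlying Lemma \ref{lem2.1}.
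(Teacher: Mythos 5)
Your proof is correct, but it takes a genuinely different route from the paper's. You argue entirely at the discrete level: you encode the deterministic discrete inf-sup condition \eqref{inf-sup-TH} through the operator $\mathcal{B}_h$ and its adjoint, and lift the optimal test function $v_h(\omega)=\mathcal{B}_h^{*}q_h(\omega)$ pathwise; since $\mathcal{B}_h^{*}$ is a fixed linear map, measurability and square-integrability are automatic, the identity $E[b(v_h,q_h)]=\|v_h\|_{\mV}^2$ follows by taking expectations, and you obtain \eqref{eq4.3} with the sharp constant $\hat{\gamma}=\gamma$ (your observation that one cannot simply pick an arbitrary pathwise maximizer is exactly the right caution, and your linear selection disposes of it). The paper instead mimics its continuous Lemma \ref{lem2.1}: it reuses the random field $z\in\mV$ with $\div z=q_h$ constructed there (via a pathwise Poisson/biharmonic lifting) and pushes it into $\mV_h$ with a Fortin operator $\Pi_h$, which preserves $E[b(\cdot,q_h)]$ against discrete pressures and is $H^1$-stable, yielding $\hat{\gamma}=1/(c^*c_*)$. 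Your argument is shorter, needs only \eqref{inf-sup-TH} plus the Hilbert-space structure of the $\cV$- and $\cW$-norms, and gives a cleaner constant; the paper's argument buys a uniform treatment parallel to the continuous inf-sup proof and introduces the Fortin operator $\Pi_h$, whose approximation property \eqref{eq4.1} is reused in the subsequent error analysis, so both deterministic-linear-lifting-plus-expectation strategies are legitimate, with the difference lying in which lifting is chosen.
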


\begin{proof}
We first like to comment that since $b(\cdot,\cdot)$ is a bilinear form $\mP$-a.s.,  the proof of \eqref{eq4.3}
essentially follows from the same lines of the proof for the deterministic inf-sup condition
and taking the expectation on each inequality appeared in that proof.
%{\color{blue} We omit the proof to save space
%and refer the interested reader to \cite{Feng_Qiu18} for a detailed proof.}
However, below we present a proof for \eqref{eq4.3} for the sake of completeness.
	
For any $q_h\in \mW_h\subset \mW$, from the proof of Lemma \ref{lem2.1} we know that
there exists $z\in \mV$ such that $\div z=q_h$ in $L^2(\Omega\times D)$ and there exists
a constant $c_*>0$ such that
\begin{align}\label{eq4.4}
\|z\|_{\mV}\leq c_* \|q_h\|_{\mW}.
\end{align}
Since the Taylor-Hood mixed finite element space pair is stable, it follows from
Fortin's equivalence lemma (cf. \cite{Brezzi_Fortin91}) that there exists a linear operator
$\Pi_h: \cV\to \cV_h$ such that for any $v\in \cV$ there exists a constant $c^*>0$ such that
\begin{align*}
b\bigl(\Pi_h v,\phi_h\bigr) &=b(v,\phi_h) \qquad\forall \phi_h\in \cW_h,\\
\|\Pi_h v\|_{\cV} &\leq c^*\|v\|_{\cV}.
\end{align*}
Extending trivially the domain of $\Pi_h$ to $\mV$ (with the range $\mV_h$) by
\begin{align}\label{eq4.5}
E\bigl[b\bigl(\Pi_h v,\phi_h\bigr)\bigr]=E\bigl[b(v,\phi_h)\bigr] \qquad\forall \phi_h\in \mW_h
\end{align}
for any $v\in \mV$, then we have
\begin{equation}\label{eq4.5a}
\|\Pi_h v\|_{\mV}\leq c^*\|v\|_{\mV}.
\end{equation}

Now, let $z_h:=\Pi_h z\in\mV_h$, from \eqref{eq4.5a} and \eqref{eq4.4} we get
\begin{align}\label{eq4.6}
\|z_h\|_{\mV}\leq c^* \|z\|_{\mV} \leq c^*c_*\|q_h\|_{\mW}.
\end{align}
It follows from \eqref{eq4.5} and \eqref{eq4.6} that
\begin{align*}
\sup_{v_h\in \mV_h}\frac{E\bigl[b(v_h,q_h)\bigr]}{\|v_h\|_{\mV}}
\geq\frac{E\bigl[b(z_h,q_h)\bigr]}{\|z_h\|_{\mV}}
=\frac{E\bigl[b(z,q_h)\bigr]}{\|z_h\|_{\mV}}
=\frac{E[\|q_h\|_{L^2}^2]}{\|z_h\|_{\mV}}
\geq \frac{1}{c^*c_*}\|q_h\|_{\mW},
\end{align*}
Thus, \eqref{eq4.3} holds with $\hat{\gamma}=\frac{1}{c^*c_*}$. The proof is complete.
\end{proof}

Finally, let $\rho_h: L^2(D)\to \cW_h$ denote the $L^2$-projection operator,  we cite the following
well-known approximation properties of $\rho_h$ and a Fortin operator $\Pi_h$ for the Taylor-Hood
element (cf. \cite{Ern_Guermond04,Girault_Raviart86,Falk08}):
\begin{align}\label{eq4.1}
&\|v-\Pi_h v\|_{L^2} +h\|\nabla(v-\Pi_h v)\|_{L^2} \leq C_{10} h^{r}\,\|v\|_{H^r} \,\,\forall v\in [H^r(D)]^d,\\
\label{eq4.2}
&\|\varphi-\rho_h\varphi\|_{L^2}+ h\|\nabla(\varphi-\rho_h\varphi)\|_{L^2}
\leq C_{10} h^s\|\varphi\|_{H^s} \,\, \forall \varphi\in H^s(D)
\end{align}
for $r=1,2, 3; s=1,2$. Where $C_{10}$ is a positive constant independent of $h$.

%%%
\subsection{Formulation of fully discrete mixed finite element method and its error analysis}\label{sec-4.2}
Our fully discrete finite element method for \eqref{eq2.8} is defined simply by adding a sub-index $h$
to all the functions and spaces appearing in the semi-discrete scheme \eqref{eq3.1}. Specifically,
we seek $\{\cF_{t_n};1\leq n\leq N\}$-adapted processes $\{u^n_h;1\leq n\leq N\} \in {\mathbb V}_h$
and $\{p^n_h;1\leq n\leq N\} \in {\mathbb W}_h$ such that for any $(v_h,q_h)\in \cV_h\times \cW_h $
\begin{subequations}\label{eq4.8}
\begin{align}\label{eq4.8a}
&\bigl(u_h^{n+1},v_h\bigr) +k\,\nu a(u_h^{n+1},v_h) + k\,b(v_h,p^{n+1}_h)  \\ \nonumber
&\hskip 0.6in  =\bigl(u_h^{n},v_h\bigr) +\int_{t_n}^{t_{n+1}}\bigl(f(s),v_h\bigr)\,ds
+\bigl(B(t_n,u^n_h)\Delta W_{n+1},v_h \bigr) \quad a.s.\\
&\bigl(\div u_h^{n+1},q_h\bigr) =0 \quad a.s.   \label{eq4.8b}
\end{align}
\end{subequations}
where $u^0_h=\cP_h u_0\in \cV_{0h}$, the $L^2$ orthogonal projection of $u_0$ into $\cV_{0h}$.

We first state the following stability estimates for $\{u_h^{n}\}$ and $\{p_h^n\}$ but omit their
proofs because they are similar to those of their semi-discrete counterparts given in Lemma \ref{lem3.1}.

\begin{lemma}
Let $\{ (u^n_h, p_h^n); 1\leq n\leq N \}$ be a solution to scheme \eqref{eq4.8}, then there hold
\begin{align}\label{eq4.10}
&\max_{1\leq n\leq N} E\bigl[\|u^n_h\|^2_{L^2}\bigr] +E\Bigl[\sum^N_{n=1}\|u^n_h-u^{n-1}_h\|^2_{L^2}\Bigr]
+\nu E\Bigl[k\sum^N_{n=1}\|\nabla u^n_h\|^2_{L^2}\Bigr] \\ \nonumber
&\hskip 1in
\leq C_{11}\bigl\{E\bigl[\|u^0_h\|^2_{L^2}\bigr] +E\bigl[ \|f\|^2_{L^2(0,T; H^{-1})}\bigr] \Bigr\},\\
\label{eq4.11}
&E\Bigl[ \Bigl\| k\sum_{n=1}^N p_h^n \Bigr\|_{L^2}^2\Bigr]
\leq C_{11}\Bigl\{E\bigl[\|u_h^0\|^2_{L^2}\bigr]  +E\bigl[ \|f\|^2_{L^2(0,T; H^{-1})} \bigr] \Bigr\},
\end{align}
where $C_{11}= C(C_T,D,T,\hat{\gamma})>0$ is a constant that does not depend on $k$ and $h$.
\end{lemma}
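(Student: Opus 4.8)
The plan is to mimic the proof of Lemma \ref{lem3.1} line by line, replacing the continuous spaces $\cV,\cW$ by their finite element counterparts $\cV_h,\cW_h$ and the continuous inf-sup constant $\beta$ by the discrete constant $\hat\gamma$ from Lemma \ref{lem4.1}. First I would take $v_h=u_h^{n+1}$ in \eqref{eq4.8a}; since $u_h^{n+1}\in\cV_{0h}$ by \eqref{eq4.8b} and \eqref{weakly_divergence-free}, the pressure term $k\,b(u_h^{n+1},p_h^{n+1})$ vanishes. Using the algebraic identity $2(a,a-b)=\|a\|^2-\|b\|^2+\|a-b\|^2$ on the term $(u_h^{n+1},u_h^{n+1}-u_h^n)$, together with Young's inequality on the $f$-term (absorbing $\tfrac{\nu k}{2}\|\nabla u_h^{n+1}\|_{L^2}^2$ via Poincar\'e) and It\^o's isometry plus the linear growth assumption \eqref{eq2.6a} on the noise term, I would sum over $n$, take the expectation, and apply the discrete Gronwall inequality. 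This yields \eqref{eq4.10}. The only point requiring a small amount of care is the cross term coming from the stochastic increment, which one splits as in \eqref{eq3.13} into a martingale part (expectation zero by $\cF_{t_n}$-measurability of $u_h^n$) and a part controlled by the linear growth bound; this is routine once one notes $B(t_n,u_h^n)$ is $\cF_{t_n}$-adapted.

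For \eqref{eq4.11}, I would proceed exactly as in the derivation of \eqref{eq3.5}: apply $\sum_{n=1}^N$ to \eqref{eq4.8a} after lowering the super-index by one, so that the telescoping velocity terms collapse to $(u_h^0,v_h)-(u_h^N,v_h)$ and the remaining terms are the time integral of $f$, the viscous sum $k\nu\sum_{n=1}^N a(u_h^n,v_h)$, and the accumulated noise $\sum_{n=1}^N(B(t_{n-1},u_h^{n-1})\Delta W_n,v_h)$. Taking the expectation, applying the Cauchy--Schwarz and Poincar\'e inequalities on the deterministic terms, It\^o's isometry on the stochastic term, and then invoking the discrete stochastic inf-sup condition \eqref{eq4.3} to pass from $E[b(v_h,k\sum p_h^n)]$ to $\hat\gamma\|k\sum_{n=1}^N p_h^n\|_{\mW}$, one arrives at \eqref{eq4.11} after inserting the already-proven velocity bound \eqref{eq4.10} and the linear growth bound \eqref{eq2.6a}. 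The constant $C_{11}$ depends on $C_T,D,T$ and on $\hat\gamma$ through the inf-sup step, but is independent of $k$ and $h$ precisely because $\hat\gamma$ is $h$-independent (Lemma \ref{lem4.1}) and the discrete Gronwall constant is $k$-independent.

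The main obstacle—though it is a mild one—is ensuring that every step of the semi-discrete argument survives the spatial discretization without picking up an $h$-dependent constant. The velocity estimate goes through verbatim because $u_h^{n+1}\in\cV_{0h}$ annihilates the discrete pressure just as $u^{n+1}\in\cV_0$ annihilated the continuous one, and no spatial inverse inequalities are needed. The pressure estimate relies on the discrete inf-sup condition \eqref{eq4.3} rather than \eqref{eq2.15}, but these have the same structure, so the estimate is uniform in $h$. Since the argument is a direct transcription of the proof of Lemma \ref{lem3.1}, with \eqref{eq2.15} replaced by \eqref{eq4.3}, the details are entirely analogous and are therefore omitted, as stated.
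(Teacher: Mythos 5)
Your proposal is correct and follows exactly the route the paper intends: the paper omits this proof precisely because it is the semi-discrete argument of Lemma \ref{lem3.1} transcribed to $\cV_h\times\cW_h$, with the discrete stochastic inf-sup condition \eqref{eq4.3} of Lemma \ref{lem4.1} replacing \eqref{eq2.15}, which is what you do. Your handling of the noise term (splitting off the martingale part using $\cF_{t_n}$-measurability of $u_h^n$ and controlling the rest by It\^o's isometry and the linear growth bound) and of the telescoped pressure sum matches the paper's treatment of \eqref{eq3.3} and \eqref{eq3.5}.
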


\begin{remark}
(a)	We note that a similar estimate to \eqref{eq4.10} was proved in \cite{BCP12} but
	\eqref{eq4.11} seems new.  We also emphasize that the above stability estimates will not be used
	in our error analysis, instead, those given in Lemma \ref{lem3.1} will be crucially used.
	
(b) Since \eqref{eq4.8} is equivalent to a linear system, the above stability estimates immediately
infer the well-posedness of scheme \eqref{eq4.8}.
\end{remark}
%\begin{corollary}
%There is a unique solution $\{ (u^n_h, p_h^n); 1\leq n\leq N \}$ to \eqref{eq4.8}.
%\end{corollary}

We now are ready to state the first main theorem of this section.

\begin{theorem}\label{thm4.1}
Set $u^0=u_0$ and let  $\{ (u^n, p^n); 1\leq n\leq N\}$ and
$\{ (u^n_h, p_h^n); 1\leq n\leq N \}$ be the solutions of \eqref{eq3.1} and \eqref{eq4.8},
respectively. Then there holds for $l\geq 1$
\begin{align}\label{eq4.13}
\max_{1\leq n\leq N}    E\bigl[\|u^n-u^n_h\|^2_{L^2}\bigr]
+ E\Bigl[k\sum_{n=1}^{N}\|\nabla (u^n-u^n_h)\|^2_{L^2}\Bigr]   \leq C_{12} k^{-1} h^{2},
\end{align} 
and for $l\geq 2$ 
\begin{align} \label{eq4.13a}
\max_{1\leq n\leq N}    E\bigl[\|\nabla(u^n-u^n_h)\|^2_{L^2}\bigr]
+ E\Bigl[k\sum_{n=1}^{N}\|A(u^n-u^n_h)\|^2_{L^2}\Bigr]  \leq C_{13}   k^{-1} h^{2},
\end{align} 
and for $l=3$
\begin{align} \label{eq4.13b}
\max_{1\leq n\leq N}   E\bigl[\|A(u^n-u^n_h)\|^2_{L^2}\bigr]
+ E\Bigl[k\sum_{n=1}^{N}\|A^{\frac{3}{2}}(u^n-u^n_h)\|^2_{L^2}\Bigr] \leq C_{14}   k^{-1} h^{2}, 
\end{align}
where $C_{12}=C(C_T,C_5,C_{10},\nu,T, M(u_0,f))$, $C_{13}=C(C_T,C_5,C_{10},\nu,T, \widetilde{M}(u_0,f))$ and $C_{14}=C(C_T,C_5,C_{10},\nu,T, \widehat{M}(u_0,f))$ are positive constants independent of $h$, and
\begin{align}\label{eq4.13c}
M(u_0,f) &:=E\bigl[\|u_0\|_{L^2}^2\bigr] +\nu k E\big[\| u_0\|^2_{H^1}\bigr] +E\bigl[\|f\|_{L^2(0,T;L^2)}^2\bigr],\\\label{eq4.13d}
\widetilde{M}(u_0,f) &:=E\bigl[\|u_0\|_{H^1}^2\bigr] +\nu k E\big[\| u_0\|^2_{H^2}\bigr] +E\bigl[\|f\|_{L^2(0,T;L^2)}^2\bigr],\\\label{eq4.13e}
\widehat{M}(u_0,f) &:=E\bigl[\|u_0\|_{H^2}^2\bigr] +\nu k E\big[\| u_0\|^2_{H^3}\bigr] +E\bigl[\|f\|_{L^2(0,T;L^2)}^2\bigr].
\end{align}
\end{theorem}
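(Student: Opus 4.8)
The plan is to prove the three estimates by the usual projection$+$energy$+$discrete Gronwall route, the only nonstandard ingredient being the treatment of a residual pressure term, which is what forces the negative power of $k$. I would work with the $L^2(D)$-orthogonal projection $\Pi_h$ of $[H^r(D)]^d\cap\cV_0$ onto $\cV_{0h}$ (so $\Pi_h u_0=\cP_h u_0=u^0_h$); since the Fortin operator maps continuously divergence-free fields into $\cV_{0h}$, the approximation bounds $\|v-\Pi_h v\|_{L^2}+h\|\nabla(v-\Pi_h v)\|_{L^2}\le Ch^r\|v\|_{H^r}$ ($r=1,2,3$) follow from \eqref{eq4.1} and an inverse inequality, with the higher-norm analogues following similarly from quasi-uniformity. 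Because each $u^{n+1}$ is divergence-free, $b(\Pi_h u^{n+1},q_h)=b(u^{n+1},q_h)=0$ for all $q_h\in\cW_h$, hence $\Pi_h u^{n+1}\in\cV_{0h}$, and we split $u^n-u^n_h=\eta^n+\theta^n$ with $\eta^n:=u^n-\Pi_h u^n$ and $\theta^n:=\Pi_h u^n-u^n_h\in\cV_{0h}$, $\theta^0=0$.

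Subtracting \eqref{eq4.8a} from \eqref{eq3.1a} with test functions restricted to $v_h\in\cV_{0h}$, the $f$-integrals cancel, the term $b(v_h,p^{n+1}_h)$ vanishes since $v_h\in\cV_{0h}$, and $b(v_h,p^{n+1})=b(v_h,p^{n+1}-\rho_h p^{n+1})$ since $\rho_h p^{n+1}\in\cW_h$; this gives, for all $v_h\in\cV_{0h}$,
\begin{align*}
(\theta^{n+1}-\theta^n,v_h)+\nu k\,a(\theta^{n+1},v_h)
&=-(\eta^{n+1}-\eta^n,v_h)-\nu k\,a(\eta^{n+1},v_h)-k\,b\bigl(v_h,p^{n+1}-\rho_h p^{n+1}\bigr)\\
&\quad+\bigl((B(t_n,u^n)-B(t_n,u^n_h))\Delta W_{n+1},v_h\bigr).
\end{align*}
Choosing $v_h=\theta^{n+1}$ kills the $L^2$ cross term $(\eta^{n+1}-\eta^n,\theta^{n+1})$ by $L^2$-orthogonality of $\eta^{n+1},\eta^n$ to $\cV_{0h}$, and the identity $(a,a-b)=\tfrac12(\|a\|^2-\|b\|^2+\|a-b\|^2)$ turns the left side into $\tfrac12(\|\theta^{n+1}\|^2_{L^2}-\|\theta^n\|^2_{L^2}+\|\theta^{n+1}-\theta^n\|^2_{L^2})+\nu k\|\nabla\theta^{n+1}\|^2_{L^2}$.

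Summing over $n$ and taking expectations, I bound the right side termwise. The viscous cross term $\nu k\,a(\eta^{n+1},\theta^{n+1})$ is absorbed by Young's inequality, leaving $\nu k\sum_n\|\nabla\eta^{n+1}\|^2_{L^2}\le Ch^2\,k\sum_n\|u^{n+1}\|^2_{H^2}\le Ch^2$ by \eqref{eq4.1} ($r=2$) and \eqref{eq3.4} ($i=2$). The stochastic term is handled as in the proof of Theorem \ref{thm3.2}: writing $\theta^{n+1}=\theta^n+(\theta^{n+1}-\theta^n)$, the $\theta^n$ piece has zero expectation by $\cF_{t_n}$-measurability and the independence of $\Delta W_{n+1}$, while the $(\theta^{n+1}-\theta^n)$ piece is bounded, via It\^o's isometry \eqref{eq2.2} and the Lipschitz bound \eqref{eq2.6} ($j=0$), by $\tfrac14\|\theta^{n+1}-\theta^n\|^2_{L^2}$ (absorbed) plus $Ck\,E\|u^n-u^n_h\|^2_{L^2}\le Ck(E\|\eta^n\|^2_{L^2}+E\|\theta^n\|^2_{L^2})$, whose $\eta^n$-part sums to $O(h^4)$ and whose $\theta^n$-part is kept for Gronwall. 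The decisive term is the residual pressure one:
\begin{align*}
k\sum_n b\bigl(\theta^{n+1},p^{n+1}-\rho_h p^{n+1}\bigr)
&\le \varepsilon\,k\sum_n\|\nabla\theta^{n+1}\|^2_{L^2}+\frac{C}{\varepsilon}\,h^2\,k\sum_n\|p^{n+1}\|^2_{H^1},
\end{align*}
using \eqref{eq4.2} ($s=1$); taking expectations and invoking the non-uniform stability bound \eqref{eq3.4a} (plus Poincar\'e for $\|p^{n+1}\|_{L^2}$) gives $E[k\sum_n\|p^{n+1}\|^2_{H^1}]\le Ck^{-1}M(u_0,f)+C\nu E\|u_0\|^2_{H^1}$, so this term contributes $O(k^{-1}h^2)$ — this single occurrence is precisely where the factor $k^{-1}$ in \eqref{eq4.13} originates. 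After absorbing the small multiples of $\nu k\|\nabla\theta^{n+1}\|^2_{L^2}$ and $\|\theta^{n+1}-\theta^n\|^2_{L^2}$ on the left, the discrete Gronwall inequality yields $\max_n E\|\theta^n\|^2_{L^2}+\nu k\sum_n E\|\nabla\theta^n\|^2_{L^2}\le Ck^{-1}h^2$, and \eqref{eq4.13} follows from $u^n-u^n_h=\eta^n+\theta^n$ and the $O(h^4)$, $O(h^2)$ bounds for $\eta^n$.

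The estimates \eqref{eq4.13a} and \eqref{eq4.13b} are obtained by the same strategy applied to the stronger-norm quantities $\|\nabla\theta^{n+1}\|_{L^2}$ and $\|A\theta^{n+1}\|_{L^2}$ (i.e.\ testing the error equation with the corresponding powers of the discrete Stokes operator on $\cV_{0h}$), now invoking \eqref{eq4.1} with $r=2$, then $r=3$, the stability bounds \eqref{eq3.4} with $i=3,4$, inverse estimates on $\cV_{0h}$, and the higher-regularity pressure estimates announced in the remark following Lemma \ref{lem3.1}, which again carry the $k^{-1}$. I expect the main obstacle to be exactly this residual-pressure bookkeeping: one must recognize that restricting to $\cV_{0h}$ eliminates $p^{n+1}_h$ but not $p^{n+1}$, that $\|p^{n+1}-\rho_h p^{n+1}\|_{L^2}\le Ch\|p^{n+1}\|_{H^1}$ is the best available, and that the only stability bound for $\|p^{n+1}\|_{H^1}$ — namely \eqref{eq3.4a} — unavoidably scales like $k^{-1}$; a secondary technical point is calibrating the Young-inequality constants so that the $\nu k\|\nabla\theta^{n+1}\|^2_{L^2}$ and $\|\theta^{n+1}-\theta^n\|^2_{L^2}$ contributions are absorbable before Gronwall, and keeping track of which remainders are $O(h^4)$ (harmless) versus $O(k^{-1}h^2)$ (dominant).
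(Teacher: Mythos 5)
Your argument is correct and shares the paper's skeleton (error splitting by a projection, energy identity with the test function equal to the discrete error, the martingale/It\^o-isometry trick for the noise increment, the residual pressure term bounded via \eqref{eq4.2} together with the non-uniform pressure stability \eqref{eq3.4a} as the sole source of $k^{-1}$, then discrete Gronwall), but your decomposition is genuinely different from the paper's. The paper splits $u^n-u^n_h=\theta^n+\veps^n$ with $\Pi_h$ a Fortin operator and $\rho_h$ the pressure projection, keeps the full mixed error equations \eqref{eq4.14}, and tests with $(\veps^{n+1},\eta^{n+1})$; it must then estimate the time-difference cross term $(\theta^{n+1}-\theta^n,\veps^{n+1})$, which it does with a $(2+k^{-1})$ Young weight, producing an extra $k^{-1}h^4$ contribution (see \eqref{eq4.17}, \eqref{eq4.19}). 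Your choice of the $L^2$-orthogonal projection onto $\cV_{0h}$ makes exactly that cross term vanish by orthogonality, at the modest price of having to justify the $H^1$ approximation property of that projection via the Fortin operator plus an inverse estimate on the quasi-uniform mesh; the pressure mechanics (eliminating $p^{n+1}_h$ by restricting to $\cV_{0h}$ and replacing $p^{n+1}$ by $p^{n+1}-\rho_h p^{n+1}$) is equivalent to the paper's use of the Fortin property to make $\veps^{n+1}$ discretely divergence-free, and both proofs hinge identically on \eqref{eq3.4a}. For the higher-norm estimates \eqref{eq4.13a}--\eqref{eq4.13b} the two routes diverge more: the paper tests with $A\veps^{n+1}$, $A^2\veps^{n+1}$ and its displayed identities \eqref{eq4.210}, \eqref{eq4.215} carry no pressure contribution at all, whereas you retain the residual term $k\,b(A_h\theta^{n+1},p^{n+1}-\rho_h p^{n+1})$ and control it through the higher-regularity analogues of \eqref{eq3.4a} that the paper only announces in Remark (d) after Lemma \ref{lem3.1}; your treatment is the more careful one, but be aware that those $H^2$-type pressure bounds are not proved in the paper and are genuinely needed in your inverse-estimate step (the $s=1$ bound \eqref{eq4.2} alone would lose the factor $h^2$ there), so a complete write-up should include their proof along the lines of \eqref{eq3.100}--\eqref{eq3.200}. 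One trivial bookkeeping point: for the viscous cross term in the $L^2$-level estimate, the dissipative part of \eqref{eq3.4} with $i=1$ already controls $k\sum_n E[\|u^n\|_{H^2}^2]$ and keeps the constant consistent with $M(u_0,f)$ in \eqref{eq4.13c}; invoking $i=2$ needlessly asks for $u_0\in H^2$.
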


\begin{proof}
For every $n\geq1$, let $e^n=u^n-u^n_h\in \cV$ and $\xi^n=p^n-p^n_h \in \cW$, it is easy to
check that $(e^n,\xi^n)$ satisfies the following error equations:
\begin{subequations}\label{eq4.14}
\begin{align}\label{eq4.14a}
\bigl(e^{n+1}-e^n,v_h\bigr) &+k\,\nu a\bigl(e^{n+1},v_h\bigr) +k\,b\bigl(v_h,\xi^{n+1}\bigr) \\ \nonumber
&=\bigl([B(t_n,u^n)-B(t_n,u^n_h)]\Delta W_{n+1},v_h \bigr) \quad\forall v_h\in \cV_h,\\
b\bigl(e^{n+1},q_h\bigr)&=0 \quad\forall q_h\in \cW_h. \label{eq4.14b}
\end{align}
\end{subequations}

Introduce the following error decompositions:
\begin{align*}
&e^n=\theta^n + \veps^n,\quad \theta^n:=u^n-\Pi_h u^n,\quad \veps^n:= \Pi_h u^n - u_h^n,\\
&\xi^n =\tau^n + \eta^n, \quad \tau^n:=p^n-\rho_h p^n, \quad \eta^n:=\rho_h p^n-p_h^n.
\end{align*}
Setting $v_h=\veps^{n+1}$ and $q_h=\eta^{n+1}$ in \eqref{eq4.14}, taking expectation and using
the definition of $\Pi_h u^n$ and $\rho_h p^n$ we get
\begin{align}\label{eq4.15}
&\frac{1}{2}\Bigl( E\bigl[\|\veps^{n+1}\|^2_{L^2}\bigr]-E\bigl[\|\veps^n\|^2_{L^2}\bigr]
+E\bigl[\|\veps^{n+1}-\veps^n\|^2_{L^2}\bigr]\Bigr)
+\nu k\bigl[\|\nabla \varepsilon^{n+1}\|^2_{L^2} \bigr]\\ \nonumber
&\qquad
=E\Bigl[\bigl([B(t_n,u^n)-B(t_n,u^n_h)]\Delta W_{n+1},\veps^{n+1}\bigr)\Bigr]
-E\bigl[\bigl(\theta^{n+1}-\theta^n, \veps^{n+1} \bigr)\bigr] \\  \nonumber
&\qquad\qquad
-k\,E\bigl[ b\bigl(\veps^{n+1},\tau^{n+1}\bigr)\bigr] -k\nu E\bigl[ a\bigl( \theta^{n+1}, \veps^{n+1} \bigr) \bigr].
\end{align}

We now bound the terms on the right-hand side as follows. First, by \eqref{eq4.2} we get
\begin{align} \label{eq4.16}
k\,E\bigl[b\bigl(\veps^{n+1},\tau^{n+1}\bigr)\bigr]
&\leq kE\bigl[ \|\nabla \veps^{n+1}\|_{L^2} \|\tau^{n+1}\|_{L^2}\bigr]  \\
&\leq \frac{\nu k}{4} E\bigl[\|\nabla \veps^{n+1}\|_{L^2}^2\bigr]
+\frac{k}{\nu} E\bigl[\|\tau^{n+1}\|_{L^2}^2\bigr] \nonumber\\
&\leq \frac{\nu k}{4} E\bigl[\|\nabla \veps^{n+1}\|_{L^2}^2 \bigr]
+\frac{C_6^2 k}{\nu} E\bigl[  \|p^{n+1}\|_{H^{1}}^2\bigr] h^{2}.
\nonumber \\
E\bigl[\bigl(\theta^{n+1}-\theta^n, \veps^{n+1}\pm \veps^n \bigr)\bigr]
&\leq E\bigl[ \|\theta^{n+1}-\theta^n\|_{L^2}\,\bigl( \|\veps^{n+1}-\veps^n \|_{L^2} + \|\veps^n\| \bigr) \bigr] \label{eq4.17} \\ \nonumber
&\hskip -1.2in
\leq\frac18 E\bigl[\|\veps^{n+1}-\veps^n\|_{L^2}^2] + (2+k^{-1})E\bigl[\|\theta^{n+1}-\theta^n\|_{L^2}^2 \bigr]
+k E\bigl[\|\veps^n\|^2_{L^2}  \bigr] \nonumber \\
&\hskip -1.2in
\leq\frac18 E\bigl[\|\veps^{n+1}-\veps^n\|_{L^2}^2]
+2C_6^2 E\bigl[\|u^{n+1}-u^n\|_{H^2}^2\bigr] \,k^{-1}h^{4} +k E\bigl[\|\veps^n\|^2_{L^2} \bigr]. \nonumber \\
k\nu E\bigl[ a\bigl( \theta^{n+1}, \veps^{n+1} \bigr) \bigr]
&\leq   \frac{\nu k}{4} E\bigl[\|\nabla \veps^{n+1}\|_{L^2}^2 \bigr]
+\nu k E\bigl[\|\nabla \theta^{n+1}\|_{L^2}^2\bigr]   \label{eq4.17a} \\
&\leq  \frac{\nu k}{4} E\bigl[\|\nabla \veps^{n+1}\|_{L^2}^2 \bigr]
+C_6^2 \nu kE\bigl[  \|u^{n+1}\|_{H^{2}}^2\bigr] h^{2}. \nonumber
\end{align}
\begin{align}\label{eq4.18}
&E\bigl[\bigl([B(t_n,u^n)-B(t_n,u_h^n)]\Delta W_{n+1},\veps^{n+1}\bigr)\bigr] \\ \nonumber
&\quad =E\bigl[\bigl([B(t_n,u^n)-B(t_n,u_h^n)]\Delta W_{n+1},\veps^{n+1}-\veps^n \bigr)\bigr] \\ \nonumber
&\quad
\leq E\bigl[\|[B(t_n,u^n)-B(t_n,u_h^n)]\Delta W_{n+1}\|_{L^2} \|\veps^{n+1}-\veps^n\|_{L^2}\bigr] \\ \nonumber
&\quad
\leq \frac18 E\bigl[\|\veps^{n+1}-\veps^n\|_{L^2}^2]+ 2E\bigl[\|[B(t_n,u^n)-B(t_n,u_h^n)]\Delta W_{n+1}\|_{L^2}^2\bigr] \\
\nonumber
&\quad
\leq \frac18 E\bigl[\|\veps^{n+1}-\veps^n\|_{L^2}^2]+ 2k E\bigl[\|B(t_n,u^n)-B(t_n,u_h^n)\|_{\cK}^2\bigr]\\ \nonumber
&\quad
\leq \frac18 E\bigl[\|\veps^{n+1}-\veps^n\|_{L^2}^2]+ 2C_T^2k E\bigl[\|u^n-u_h^n\|_{L^2}^2\bigr]\\ \nonumber
&\quad
\leq  \frac18 E\bigl[\|\veps^{n+1}-\veps^n\|_{L^2}^2]+4C_T^2k E\bigl[\|\veps^n\|_{L^2}^2 +\|\theta^n\|_{L^2}^2\bigr]\\ \nonumber
&\quad
\leq  \frac18 E\bigl[\|\veps^{n+1}-\veps^n\|_{L^2}^2]+4C_T^2k E\bigl[\|\veps^n\|_{L^2}^2\bigr]
%+4C_T^2 C_6^2 kE\bigl[\|u^n\|_{H^2}^2\bigr] h^{4}.
+4C_TC_6^2k E\bigl[ \|u^n\|^2_{H^2} \bigr] h^4. % + \|p^n\|^2_{H^1}  \bigr]  h^4.
\end{align}
Here we have used \eqref{eq4.1} and \eqref{eq4.2} to obtain the last inequalities
in \eqref{eq4.16}--\eqref{eq4.18}.

Applying the summation operator $\sum_{n=0}^{N-1}$ on both sides of \eqref{eq4.15} and using
estimates \eqref{eq4.16}--\eqref{eq4.18} and Lemma \ref{lem3.1} we obtain
\begin{align}\label{eq4.19}
&E\bigl[\|\veps^{N}\|^2_{L^2}\bigr]+\sum^{N-1}_{n=0} E\bigl[\|\veps^{n+1}-\veps^n\|^2_{L^2}\bigr]
+\nu k\sum^{N-1}_{n=0} E\bigl[\|\nabla \veps^{n+1}\|^2_{L^2}\bigr]\\  \nonumber
&\qquad \leq 16C_T^2 k \sum^{N-1}_{n=0} E\bigl[\|\veps^n\|^2_{L^2}\bigl]
+ 8C_6^2 \sum_{n=0}^{N-1} E\bigl[\|u^{n+1}-u^n\|_{H^2}^2\bigr] h^{4} k^{-1} \\ \nonumber
&\qquad \quad
+4C_6^2\Bigl\{ (\nu+ 4C_T^2h^2) k\sum_{n=0}^{N} E\bigl[\|u^n\|^2_{H^2}\bigr] %\\ \nonumber
+\nu^{-1} k\sum_{n=0}^{N-1} E\bigl[\|p^{n+1}\|_{H^{1}}^2\bigr]\Bigr\} h^{2} \\ \nonumber
&\qquad   \leq 16C_T^2 k \sum^{N-1}_{n=0} E\bigl[\|\veps^n\|^2_{L^2}\bigl]
+ \widetilde{C} M(u_0,f) h^{2},
\end{align}
where $\widetilde{C}_1:= 16C_5C_{10}^2\bigl[(4C_T^2+ \nu +1)h^2 k^{-1} + \nu^{-1} k^{-1} \bigr]$ and
$M(u_0,f)$ is defined by \eqref{eq4.13c}.

It follows from \eqref{eq4.19} and Gronwall's inequality that
\begin{align}\label{eq4.21}
\max_{1\leq n\leq N} E\bigl[\|\veps^n\|^2_{L^2}\bigr]
+k\nu\sum^N_{n=1}E\bigl[\|\nabla\veps^n\|^2_{L^2}\bigr]
\leq \exp(16C_T^2 T)\, \widetilde{C} M(u_0,f)  \, h^{2}.
\end{align}
Then \eqref{eq4.13} follows from an application of the triangle inequality
on $e^n=\theta^n + \veps^n$.

To show \eqref{eq4.13a}, setting $v_h=A\veps^{n+1} \in \cV_{0h}$ and $q_h=0$ in \eqref{eq4.14},
taking expectation and using the definition of $\Pi_h u^n$ we get
\begin{align}\label{eq4.210}
&\frac{1}{2}\Bigl( E\bigl[\|\nabla\veps^{n+1}\|^2_{L^2}\bigr]-E\bigl[\|\nabla\veps^n\|^2_{L^2}\bigr]
+E\bigl[\|\nabla(\veps^{n+1}-\veps^n)\|^2_{L^2}\bigr]\Bigr) \\ \nonumber
&\qquad \qquad +\nu kE\bigl[\|A \varepsilon^{n+1}\|^2_{L^2} \bigr]\\ \nonumber
&=E\Bigl[\bigl(\nabla[B(t_n,u^n)-B(t_n,u^n_h)]\Delta W_{n+1},\nabla\veps^{n+1}\bigr)\Bigr] \\ \nonumber
&\qquad \qquad -E\bigl[\bigl(\nabla(\theta^{n+1}-\theta^n), \nabla\veps^{n+1} \bigr)\bigr]
-k\nu E\bigl[ \bigl( A\theta^{n+1}, A\veps^{n+1} \bigr) \bigr].
\end{align}
We estimate the terms on the right-hand side as follows. Using \eqref{eq4.1}--\eqref{eq4.2} we have
\begin{align} \label{eq4.211}
& E\bigl[\bigl(\nabla(\theta^{n+1}-\theta^n), \nabla(\veps^{n+1}\pm \veps^n) \bigr)\bigr] \\ \nonumber
& \leq E\bigl[ \|\nabla(\theta^{n+1}-\theta^n)\|_{L^2}\,\bigl( \|\nabla(\veps^{n+1}-\veps^n) \|_{L^2} + \|\nabla\veps^n\| \bigr) \bigr]  \\ \nonumber
& \leq\frac18 E\bigl[\|\nabla(\veps^{n+1}-\veps^n)\|_{L^2}^2] + (2+k^{-1})E\bigl[\|\nabla(\theta^{n+1}-\theta^n)\|_{L^2}^2 \bigr]
+k E\bigl[\|\nabla\veps^n\|^2_{L^2}  \bigr] \nonumber \\
& \leq\frac18 E\bigl[\|\nabla(\veps^{n+1}-\veps^n)\|_{L^2}^2]
+2C_{10}^2 E\bigl[\|\nabla(u^{n+1}-u^n)\|_{H^2}^2\bigr] \,k^{-1}h^{4} +k E\bigl[\|\nabla\veps^n\|^2_{L^2} \bigr]. \nonumber
\end{align}
\begin{align}
k\nu E\bigl[\bigl( A\theta^{n+1}, A\veps^{n+1} \bigr) \bigr]
&\leq   \frac{\nu k}{4} E\bigl[\|A \veps^{n+1}\|_{L^2}^2 \bigr]
+\nu k E\bigl[\|A \theta^{n+1}\|_{L^2}^2\bigr]   \label{eq4.211a} \\
&\leq  \frac{\nu k}{4} E\bigl[\|A \veps^{n+1}\|_{L^2}^2 \bigr]
+C_{10}^2 \nu kE\bigl[  \|u^{n+1}\|_{H^{3}}^2\bigr] h^{2}. \nonumber
\end{align}
\begin{align}\label{eq4.212}
&E\bigl[\bigl(\nabla[B(t_n,u^n)-B(t_n,u_h^n)]\Delta W_{n+1},\nabla\veps^{n+1}\bigr)\bigr] \\ \nonumber
&\quad =E\bigl[\bigl(\nabla[B(t_n,u^n)-B(t_n,u_h^n)]\Delta W_{n+1},\nabla(\veps^{n+1}-\veps^n) \bigr)\bigr] \\ \nonumber
&\quad
\leq E\bigl[\|\nabla[B(t_n,u^n)-B(t_n,u_h^n)]\Delta W_{n+1}\|_{L^2} \|\nabla(\veps^{n+1}-\veps^n)\|_{L^2}\bigr] \\ \nonumber
&\quad
\leq \frac18 E\bigl[\|\nabla(\veps^{n+1}-\veps^n)\|_{L^2}^2]+ 2E\bigl[\|\nabla[B(t_n,u^n)-B(t_n,u_h^n)]\Delta W_{n+1}\|_{L^2}^2\bigr] \\
\nonumber
&\quad
\leq \frac18 E\bigl[\|\nabla(\veps^{n+1}-\veps^n)\|_{L^2}^2]+ 2k E\bigl[\|\nabla(B(t_n,u^n)-B(t_n,u_h^n))\|_{\cK}^2\bigr]\\ \nonumber
&\quad
\leq  \frac18 E\bigl[\|\nabla(\veps^{n+1}-\veps^n)\|_{L^2}^2]+4C_T^2k E\bigl[\|\nabla\veps^n\|_{L^2}^2\bigr]
%+4C_T^2 C_6^2 kE\bigl[\|u^n\|_{H^2}^2\bigr] h^{4}.
+4C_TC_{10}^2k E\bigl[ \|u^n\|^2_{H^3} \bigr] h^4. % + \|p^n\|^2_{H^1}  \bigr]  h^4.
\end{align}
Here we have used \eqref{eq4.1} to obtain the last inequalities
in \eqref{eq4.211}--\eqref{eq4.212}.
Applying the summation operator $\sum_{n=0}^{N-1}$ on both sides of \eqref{eq4.210} and using
estimates \eqref{eq4.211}--\eqref{eq4.212} and Lemma \ref{lem3.1} we obtain
\begin{align}\label{eq4.213}
&E\bigl[\|\nabla\veps^{N}\|^2_{L^2}\bigr]+\sum^{N-1}_{n=0} E\bigl[\|\nabla(\veps^{n+1}-\veps^n)\|^2_{L^2}\bigr]
+\nu k\sum^{N-1}_{n=0} E\bigl[\|A \veps^{n+1}\|^2_{L^2}\bigr]\\  \nonumber
&\qquad \leq 16C_T^2 k \sum^{N-1}_{n=0} E\bigl[\|\nabla\veps^n\|^2_{L^2}\bigl]
+ 8C_{10}^2 \sum_{n=0}^{N-1} E\bigl[\|\nabla(u^{n+1}-u^n)\|_{H^2}^2\bigr] h^{4} k^{-1} \\ \nonumber
&\qquad \quad
+4C_{10}^2\Bigl\{ (\nu+ 4C_T^2h^2) k\sum_{n=0}^{N} E\bigl[\|u^n\|^2_{H^3}\bigr] %\\ \nonumber
 \\ \nonumber
&\qquad   \leq 16C_T^2 k \sum^{N-1}_{n=0} E\bigl[\|\nabla\veps^n\|^2_{L^2}\bigl]
+ \widetilde{C}_2 M(u_0,f) h^{2},
\end{align}
where  $\widetilde{C}_2:= 16C_5C_{10}^2\bigl[(4C_T^2+ \nu +1)h^2 k^{-1}\bigr]$ and
$\widehat{M}(u_0,f)$ is defined by \eqref{eq4.13d}.

It follows from \eqref{eq4.213} and Gronwall's inequality that
\begin{align}\label{eq4.214}
\max_{1\leq n\leq N} E\bigl[\|\nabla\veps^n\|^2_{L^2}\bigr]
+k\nu\sum^N_{n=1}E\bigl[\|A\veps^n\|^2_{L^2}\bigr]
\leq \exp(16C_T^2 T)\, \widetilde{C} M(u_0,f)  \, h^{2}.
\end{align}
Thus, \eqref{eq4.13a} follows from an application of the triangle inequality
on $e^n=\theta^n + \veps^n$.

To prove \eqref{eq4.13b},
setting $v_h=A^{2}\veps^{n+1} \in \cV_{0h}$ and $q_h=0$ in \eqref{eq4.14}, taking expectation and using
the definition of $\Pi_h u^n$ we get
\begin{align}\label{eq4.215}
&\frac{1}{2}\Bigl( E\bigl[\|A\veps^{n+1}\|^2_{L^2}\bigr]-E\bigl[\|A\veps^n\|^2_{L^2}\bigr]
+E\bigl[\|A(\veps^{n+1}-\veps^n)\|^2_{L^2}\bigr]\Bigr) \\ \nonumber
&\hskip 1.1in  +\nu kE\bigl[\|A^{\frac{3}{2}} \varepsilon^{n+1}\|^2_{L^2} \bigr]\\ \nonumber
&\qquad
=E\Bigl[\bigl(A[B(t_n,u^n)-B(t_n,u^n_h)]\Delta W_{n+1},A\veps^{n+1}\bigr)\Bigr] \\ \nonumber
&\qquad\qquad  -E\bigl[\bigl(A(\theta^{n+1}-\theta^n), A\veps^{n+1} \bigr)\bigr]
-k\nu E\bigl[ \bigl( A^{\frac{3}{2}}\theta^{n+1}, A^{\frac{3}{2}}\veps^{n+1} \bigr) \bigr].
\end{align}
We now bound the terms on the right-hand side as follows. First, by \eqref{eq4.2} we get
\begin{align} \label{eq4.216}
&E\bigl[\bigl(A(\theta^{n+1}-\theta^n), A(\veps^{n+1}\pm \veps^n) \bigr)\bigr] \\ \nonumber
&\leq E\bigl[ \|A(\theta^{n+1}-\theta^n)\|_{L^2}\,\bigl( \|A(\veps^{n+1}-\veps^n) \|_{L^2} + \|A\veps^n\| \bigr) \bigr]  \\ \nonumber
&\leq\frac18 E\bigl[\|A(\veps^{n+1}-\veps^n)\|_{L^2}^2] + (2+k^{-1})E\bigl[\|A(\theta^{n+1}-\theta^n)\|_{L^2}^2 \bigr]
+k E\bigl[\|A\veps^n\|^2_{L^2}  \bigr] \nonumber \\
&\leq\frac18 E\bigl[\|A(\veps^{n+1}-\veps^n)\|_{L^2}^2]
+2C_{10}^2 E\bigl[\|A(u^{n+1}-u^n)\|_{H^2}^2\bigr] \,k^{-1}h^{4} +k E\bigl[\|A\veps^n\|^2_{L^2} \bigr]. \nonumber
\end{align}
\begin{align}
k\nu E\bigl[\bigl( A^{\frac{3}{2}}\theta^{n+1}, A^{\frac{3}{2}}\veps^{n+1} \bigr) \bigr]
&\leq   \frac{\nu k}{4} E\bigl[\|A^{\frac{3}{2}} \veps^{n+1}\|_{L^2}^2 \bigr]
+\nu k E\bigl[\|A \theta^{n+1}\|_{L^2}^2\bigr]   \label{eq4.216a} \\
&\leq  \frac{\nu k}{4} E\bigl[\|A^{\frac{3}{2}} \veps^{n+1}\|_{L^2}^2 \bigr]
+C_{10}^2 \nu kE\bigl[  \|u^{n+1}\|_{H^{4}}^2\bigr] h^{2}. \nonumber
\end{align}
\begin{align}\label{eq4.217}
&E\bigl[\bigl(A[B(t_n,u^n)-B(t_n,u_h^n)]\Delta W_{n+1},A\veps^{n+1}\bigr)\bigr] \\ \nonumber
&\quad =E\bigl[\bigl(A[B(t_n,u^n)-B(t_n,u_h^n)]\Delta W_{n+1},A(\veps^{n+1}-\veps^n) \bigr)\bigr] \\ \nonumber
&\quad
\leq E\bigl[\|A[B(t_n,u^n)-B(t_n,u_h^n)]\Delta W_{n+1}\|_{L^2} \|A(\veps^{n+1}-\veps^n)\|_{L^2}\bigr] \\ \nonumber
&\quad
\leq \frac18 E\bigl[\|A(\veps^{n+1}-\veps^n)\|_{L^2}^2]+ 2E\bigl[\|A[B(t_n,u^n)-B(t_n,u_h^n)]\Delta W_{n+1}\|_{L^2}^2\bigr] \\
\nonumber
&\quad
\leq \frac18 E\bigl[\|A(\veps^{n+1}-\veps^n)\|_{L^2}^2]+ 2k E\bigl[\|A(B(t_n,u^n)-B(t_n,u_h^n))\|_{\cK}^2\bigr]\\ \nonumber
&\quad
\leq  \frac18 E\bigl[\|A(\veps^{n+1}-\veps^n)\|_{L^2}^2]+4C_T^2k E\bigl[\|A\veps^n\|_{L^2}^2\bigr]
%+4C_T^2 C_6^2 kE\bigl[\|u^n\|_{H^2}^2\bigr] h^{4}.
+4C_TC_{10}^2k E\bigl[ \|u^n\|^2_{H^4} \bigr] h^4. % + \|p^n\|^2_{H^1}  \bigr]  h^4.
\end{align}
Here we have used \eqref{eq4.1} to obtain the last inequalities
in \eqref{eq4.216}--\eqref{eq4.217}.
Applying the summation operator $\sum_{n=0}^{N-1}$ on both sides of \eqref{eq4.215} and using
estimates \eqref{eq4.216}--\eqref{eq4.217} and Lemma \ref{lem3.1} we obtain
\begin{align}\label{eq4.218}
&E\bigl[\|A\veps^{N}\|^2_{L^2}\bigr]+\sum^{N-1}_{n=0} E\bigl[\|A(\veps^{n+1}-\veps^n)\|^2_{L^2}\bigr]
+\nu k\sum^{N-1}_{n=0} E\bigl[\|A^{\frac{3}{2}} \veps^{n+1}\|^2_{L^2}\bigr]\\  \nonumber
&\qquad \leq 16C_T^2 k \sum^{N-1}_{n=0} E\bigl[\|A\veps^n\|^2_{L^2}\bigl]
+ 8C_{10}^2 \sum_{n=0}^{N-1} E\bigl[\|A(u^{n+1}-u^n)\|_{H^2}^2\bigr] h^{4} k^{-1} \\ \nonumber
&\qquad \quad
+4C_{10}^2\Bigl\{ (\nu+ 4C_T^2h^2) k\sum_{n=0}^{N} E\bigl[\|u^n\|^2_{H^4}\bigr] %\\ \nonumber
 \\ \nonumber
&\qquad   \leq 16C_T^2 k \sum^{N-1}_{n=0} E\bigl[\|A\veps^n\|^2_{L^2}\bigl]
+ \widetilde{C} M(u_0,f) h^{2},
\end{align}
where  $\widetilde{C}_3:= 16C_5C_{10}^2\bigl[(4C_T^2+ \nu +1)h^2 k^{-1}\bigr]$ and
$\widehat{M}(u_0,f)$ is defined by \eqref{eq4.13b}.

It follows from \eqref{eq4.218} and Gronwall's inequality that
\begin{align}\label{eq4.219}
\max_{1\leq n\leq N} E\bigl[\|A\veps^n\|^2_{L^2}\bigr]
+k\nu\sum^N_{n=1}E\bigl[\|A^{\frac{3}{2}}\veps^n\|^2_{L^2}\bigr]
\leq \exp(16C_T^2 T)\, \widetilde{C}_3 M(u_0,f)  \, h^{2}.
\end{align}

Finally, \eqref{eq4.13e} follows from an application of the triangle inequality
on $e^n=\theta^n + \veps^n$. The proof is complete.
\end{proof}

\begin{remark}
(a) We note that the conclusion of the theorem still holds if $u_h^0=\cP_h u_0$ is replaced
by $u_h^0=\cQ_h u_0$, the $L^2$-projection of $u_0$ into $\cV_h$. We emphasize that
the $k^{-\frac12}$ factor in the error bound is a reflection of the low regularity of the pressure $p$
and the simultaneous approximation property of the mixed finite element method.

(b) We also note that the error estimate for the velocity approximations of divergence-free finite element
methods do not have the ``bad" factor $k^{-\frac12}$ (cf. \cite{CP12}) at the expense of using
divergence-free finite element spaces and not approximating the pressure.
\end{remark}

The second main result of this section is the following error estimate for the pressure
approximation.

\begin{theorem}\label{thm4.2}
Under the assumptions of Theorem \ref{thm4.1} there holds  
\begin{align}\label{eq4.22}
E\Bigl[\Bigl\|k\sum^N_{n=1} (p^n- p^n_h) \Bigr\|_{L^2}\Bigr]\leq  C_{15} k^{-\frac12} h
\qquad \mbox{for $1\leq l\leq 3$},
\end{align}
where $C_{15}=\hat{\gamma}^{-1}C_{12} \bigl((C_0+\nu^{-1}) +2C_TC_0 T^{\frac12}\bigr)>0$
is a constant independent of $h$.
\end{theorem}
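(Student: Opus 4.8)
The plan is to follow the strategy of the proof of Theorem~\ref{thm3.3}, now comparing the semi-discrete pressure $p^n$ with the fully discrete $p^n_h$ and using the \emph{discrete} stochastic inf-sup condition of Lemma~\ref{lem4.1} in place of Lemma~\ref{lem2.1}. First I would sum the fully discrete error equation \eqref{eq4.14a} over $n=0,\dots,N-1$, telescoping the velocity difference, to obtain, for every $v_h\in\mV_h$,
\[
k\sum_{n=1}^N b\bigl(v_h,\xi^n\bigr)=\bigl(e^0-e^N,v_h\bigr)-k\nu\sum_{n=1}^N a\bigl(e^n,v_h\bigr)+\sum_{n=0}^{N-1}\bigl([B(t_n,u^n)-B(t_n,u^n_h)]\Delta W_{n+1},v_h\bigr),
\]
where $e^n=u^n-u^n_h$ and $\xi^n=p^n-p^n_h$. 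Since $\xi^n\notin\cW_h$ in general, I would keep the splitting used in the proof of Theorem~\ref{thm4.1}, $\xi^n=\tau^n+\eta^n$ with $\tau^n:=p^n-\rho_h p^n$ and $\eta^n:=\rho_h p^n-p^n_h\in\cW_h$, so that (using $E\|\cdot\|_{L^2}\le\|\cdot\|_{\mW}$) it suffices to bound $E\bigl[\|k\sum_{n=1}^N\tau^n\|_{L^2}\bigr]$ directly and $\|k\sum_{n=1}^N\eta^n\|_{\mW}$ through \eqref{eq4.3}.

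The projection part is routine: using the approximation property \eqref{eq4.2} with $s=1$, the Poincar\'e--Wirtinger inequality for the mean-zero functions $p^n$, a Cauchy--Schwarz over the $N=T/k$ time levels, and the pressure stability bound \eqref{eq3.4a}, one finds that $E\bigl[\|k\sum_{n=1}^N\tau^n\|_{L^2}\bigr]$ is controlled by a constant times $h\,T^{1/2}\bigl(E[k\sum_{n=1}^N\|\nabla p^n\|_{L^2}^2]\bigr)^{1/2}$, which is $O(k^{-1/2}h)$. For the discrete part, Lemma~\ref{lem4.1} reduces matters to estimating $E[b(v_h,k\sum_{n=1}^N\eta^n)]$ for $v_h\in\mV_h$; writing $b(v_h,\eta^n)=b(v_h,\xi^n)-b(v_h,\tau^n)$ and inserting the summed identity above, I would bound the viscous sum by $\nu\,T^{1/2}\bigl(E[k\sum_{n=1}^N\|\nabla e^n\|_{L^2}^2]\bigr)^{1/2}\|v_h\|_{\mV}=O(k^{-1/2}h)\|v_h\|_{\mV}$ by \eqref{eq4.13}, the endpoint term by $C_0\bigl(E\|e^0-e^N\|_{L^2}^2\bigr)^{1/2}\|v_h\|_{\mV}=O(k^{-1/2}h)\|v_h\|_{\mV}$ (using \eqref{eq4.13} and the approximation estimate for $u^0_h=\cP_h u_0$), and the residual $b(v_h,k\sum\tau^n)$ term exactly as in the projection part.

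The delicate term, and the real obstacle, is the accumulated stochastic forcing $\sum_{n=0}^{N-1}E\bigl[\bigl([B(t_n,u^n)-B(t_n,u^n_h)]\Delta W_{n+1},v_h\bigr)\bigr]$. In contrast to the comparison between the exact solution and the time-stepping scheme, here $v_h$ ranges over generally non-adapted elements of $\mV_h$, so this term does not simply vanish and has to be estimated. The way I would handle it: apply Cauchy--Schwarz in $\Omega$ to factor it as $\bigl(E\|\sum_{n=0}^{N-1}[B(t_n,u^n)-B(t_n,u^n_h)]\Delta W_{n+1}\|_{L^2}^2\bigr)^{1/2}\,(E\|v_h\|_{L^2}^2)^{1/2}$, then exploit that the summands are orthogonal martingale increments in $L^2(\Omega;[L^2(D)]^d)$ — which is where the $\cF_{t_n}$-measurability of $u^n$ and $u^n_h$ is used — together with It\^o's isometry on each $[t_n,t_{n+1}]$ and the Lipschitz bound \eqref{eq2.6} with $j=0$, $\|B(t_n,u^n)-B(t_n,u^n_h)\|_{\cK}\le C_T\|e^n\|_{L^2}$. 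This gives $C_T\bigl(k\sum_{n=0}^{N-1}E\|e^n\|_{L^2}^2\bigr)^{1/2}\le C_T\bigl(T\,C_{12}\,k^{-1}h^2\bigr)^{1/2}=O(k^{-1/2}h)$ by \eqref{eq4.13}, which is the source of the $C_TC_0T^{1/2}$ contribution to $C_{15}$. Collecting the four estimates, using $E\|\cdot\|_{L^2}\le\|\cdot\|_{\mW}$, and dividing by $\hat\gamma$ then yields \eqref{eq4.22}. The point to be careful about is that one must use the $L^2(\Omega)$-orthogonality of the martingale increments rather than a triangle inequality on the sum of $N$ terms; the crude route would cost an extra factor $N^{1/2}=T^{1/2}k^{-1/2}$ and produce a spurious $k^{-1}$ in place of the (sharp) $k^{-1/2}$.
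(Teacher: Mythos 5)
Your proposal is correct and follows the same backbone as the paper's proof: sum the fully discrete error equation over the time levels, bound the telescoped velocity term, the viscous sum and the accumulated stochastic forcing using Theorem \ref{thm4.1}, the $\cF_{t_{n-1}}$-measurability of $u^{n-1},u^{n-1}_h$ together with It\^o's isometry (i.e.\ the $L^2(\Omega)$-orthogonality of the martingale increments, exactly as you insist -- the paper's estimate \eqref{eq4.24} does the same and would indeed lose a factor $k^{-1/2}$ under a crude triangle inequality), and finally invoke the discrete stochastic inf-sup condition \eqref{eq4.3}. The one genuine difference is your treatment of the nonconformity of the pressure error: the paper applies \eqref{eq4.3} directly to $k\sum_{n}\xi^n$ with $\xi^n=p^n-p^n_h$, even though $p^n\notin\cW_h$, so strictly speaking its last step oversteps the hypothesis $q_h\in\mW_h$ of Lemma \ref{lem4.1}; you instead split $\xi^n=\tau^n+\eta^n$ with $\tau^n=p^n-\rho_h p^n$ and $\eta^n\in\cW_h$, control $\tau^n$ through \eqref{eq4.2} and the stability bound \eqref{eq3.4a} (which is $O(k^{-1/2}h)$, the same order), and reserve the discrete inf-sup for $\eta^n$. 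This is a more careful, fully justified version of the paper's argument; its only cost is a slightly different (but same-order) constant than the stated $C_{15}$, which is harmless since the constants in the paper's own proof are not tracked precisely either.
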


\begin{proof}
The proof follows the same lines as the proof of Theorem \ref{thm3.3}.
First, summing \eqref{eq4.8a} (after lowering the index by one) over $1\leq n\leq m (\leq N)$
and subtracting  the resulting equation from \eqref{eq3.21a} we get
\begin{align*}
&\bigl(e^m,v_h\bigr) +\nu k\sum_{n=1}^m a\bigl(e^n, v_h\bigr)+k\sum_{n=1}^m b\bigl(v_h,\xi^n\bigr)\\
&\quad
=(e^0,v_h)+ \sum_{n=1}^m \bigl([B(t_{n-1},u^{n-1})-B(t_{n-1},u^{n-1}_h)] \Delta W_n,v_h\bigr)
\quad\forall v_h\in \cV_h,\,\, a.s
\end{align*}
Here $e^n$ and $\xi^n$ are the same as in the proof of the previous theorem.  Then we have
\begin{align}\label{eq4.23}
&E\Bigl[b\Bigl(v_h,k\sum^m_{n=1} \xi^n \Bigr)\Bigr]
=E\bigl[\bigl(e^0-e^n,v_h\bigr)]-\nu k\sum^m_{n=1} E\bigl[a\bigl(e^n,v_h\bigr)\bigr]\\ \nonumber
&\qquad\qquad
+\sum^m_{n=1} E\bigl[\bigl([B(t_{n-1},u^{n-1})-B(t_{n-1},u^{n-1}_h)]\Delta W_n,v_h\bigr) \bigr]\\ \nonumber
&\, \leq (C_0+\frac{1}{\nu})\Bigl( E\bigl[\|e^0\|_{L^2}^2 + \|e^n\|_{L^2}^2\bigr]
+ \nu k \sum_{n=1}^m E\bigl[\|\nabla e^n\|_{L^2}^2\bigr] \Bigr)^{\frac12}
\Bigl(E\bigl[\|\nabla v_h\|_{L^2}^2\bigr]\Bigr)^{\frac12}\\ \nonumber
&\qquad\qquad
+\sum^m_{n=1} E\bigl[\bigl([B(t_{n-1},u^{n-1})-B(t_{n-1},u^{n-1}_h)]\Delta W_n,v_h\bigr) \bigr] \\ \nonumber
&\, \leq (C_0+\frac{1}{\nu}) C_8 k^{-\frac12} h \|v_h\|_{\mV}\\ \nonumber
&\qquad\qquad
+\sum^m_{n=1} E\bigl[\bigl([B(t_{n-1},u^{n-1})-B(t_{n-1},u^{n-1}_h)]\Delta W_n,v_h\bigr) \bigr].
\end{align}
Using the Poincar\'{e} inequality, the last term in \eqref{eq4.23} can be bounded as
\begin{align}\label{eq4.24}
&\sum^m_{n=1} E\bigl[\bigl([B(t_{n-1},u^{n-1})-B(t_{n-1},u^{n-1}_h)]\Delta W_n,v_h\bigr) \bigr] \\ \nonumber
&\quad
\leq E\Bigl[ \Bigl\|\sum^m_{n=1}[B(t_{n-1},u^{n-1})-B(t_{n-1},u^{n-1}_h)]\Delta W_n\Bigr\|_{L^2} \|v_h\|_{L^2}\Bigr]\\ \nonumber
&\quad
\leq 2\Bigl( \sum_{n=1}^m E\bigl[\|[B(t_{n-1},u^{n-1})-B(t_{n-1},u^{n-1}_h)]\Delta W_n\|^2_{L^2} \Bigr)^{\frac12} \Bigl( E\bigl[\|v_h\|^2_{L^2}\bigr] \Bigr)^{\frac12}\\ \nonumber
&\quad
\leq 2\Bigl( k\sum_{n=1}^m E\bigl[ \|B(t_{n-1},u^{n-1})-B(t_{n-1},u^{n-1}_h)\|^2_{\cK}\bigr] \Bigr)^{\frac12} \Bigl( E\bigl[\|v_h\|^2_{L^2}\bigr] \Bigr)^{\frac12}\\ \nonumber
&\quad
\leq 2C_TC_0\Bigl( k\sum_{n=1}^m E\bigl[ \|e^{n-1}\|^2_{L^2}\bigr] \Bigr)^{\frac12}\Bigl(E\bigl[\|\nabla v_h\|^2_{L^2}\bigr] \Bigr)^{\frac12}\\ \nonumber
&\quad
\leq 2C_TC_0 T^{\frac12} C_{12}k^{-\frac12}  h \|v_h\|_{\mV}.
\end{align}

Finally, it follows from \eqref{eq4.23}--\eqref{eq4.24} and the discrete inf-sup condition
\eqref{eq4.3} that
\begin{align*}
\hat{\gamma} \Bigl\| k\sum^m_{n=1} \xi^n \Bigr\|_{\mW}
\leq  C_{12}\bigl( (C_0+\nu^{-1}) +2C_TC_0 T^{\frac12}\bigr) k^{-\frac12} h,
\end{align*}
which gives the desired inequality \eqref{eq4.15}. The proof is complete.
\end{proof}

Theorems \ref{thm3.2}, \ref{thm3.3}, \ref{thm4.1} and Theorem \ref{thm4.2} and the triangle inequality
immediately infer the following global error estimates, which are the main results of this paper.

\begin{theorem}
Under the assumptions of Theorems \ref{thm3.2}, \ref{thm3.3}, \ref{thm4.1} and Theorem \ref{thm4.2},
there hold the following estimates:
\begin{align}\label{eq4.26}
\max_{1\leq n\leq N} \Bigl(E\bigl[\|u(t_n)-u^n_h\|^2_{L^2}\bigl]\Bigr)^{\frac12}
&+\Bigl( E\Bigl[ k\sum_{n=1}^{N}\|\nabla (u(t_n)-u^n_h)\|^2_{L^2}\Bigr] \Bigr)^{\frac12}\\ \nonumber
&\leq C_6 k^{\frac12}+ C_{12} k^{-\frac12} h, \quad \mbox{if $l\geq 1$},  \\
\label{eq4.27}
\max_{1\leq n\leq N} \Bigl(E\bigl[\|\nabla (u(t_n)-u^n_h)\|^2_{L^2})\bigl]\Bigr)^{\frac12}
&+\Bigl( E\Bigl[ k\sum_{n=1}^{N}\|A (u(t_n)-u^n_h)\|^2_{L^2}\Bigr] \Bigr)^{\frac12}\\ \nonumber
&\leq C_7 k^{\frac12}+ C_{13} k^{-\frac12} h, \quad \mbox{if $l\geq 2$}, \\
\label{eq4.28}
\max_{1\leq n\leq N} \Bigl(E\bigl[\|A(u(t_n)-u^n_h)\|^2_{L^2})\bigl]\Bigr)^{\frac12}
&+\Bigl( E\Bigl[ k\sum_{n=1}^{N}\|A^{\frac{3}{2}}(u(t_n)-u^n_h)\|^2_{L^2}\Bigr] \Bigr)^{\frac12}\\ \nonumber
&\leq C_8 k^{\frac12}+ C_{14} k^{-\frac12} h, \quad \mbox{if $l=3$}, \\
\label{eq4.29}
E\Bigl[\Bigl\|\int_0^{t_m} p(s)\,ds -k\sum^m_{n=1}p^n_h \Bigr\|_{L^2}\Bigr]
&\leq C_9 k^{\frac12}+C_{15} k^{-\frac12} h \quad \mbox{for $1\leq l\leq 3$}. 
\end{align}
\end{theorem}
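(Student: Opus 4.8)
The statement is a direct consequence of the four error estimates already proved for the time-semidiscretization (Theorems \ref{thm3.2} and \ref{thm3.3}) and for the spatial discretization of the time-discrete scheme (Theorems \ref{thm4.1} and \ref{thm4.2}), glued together by the triangle inequality in the appropriate Bochner norms. There is essentially no new analysis to do; the only care needed is to track how the constants and the powers of $k$ recombine.

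First I would split the velocity error at each time level $t_n$ as
\[
u(t_n)-u^n_h = \bigl(u(t_n)-u^n\bigr) + \bigl(u^n-u^n_h\bigr),
\]
apply the triangle inequality in $L^2(\Omega,L^2)$, $L^2(\Omega,\cV)$, and $L^2(\Omega,[H^2(D)]^d)$ (the latter via $\|A\cdot\|_{L^2}$), and take square roots. The first summand is controlled by \eqref{eq3.6}, \eqref{eq3.6a}, \eqref{eq3.6b} (giving the $C_6 k^{1/2}$, $C_7 k^{1/2}$, $C_8 k^{1/2}$ terms), and the second by \eqref{eq4.13}, \eqref{eq4.13a}, \eqref{eq4.13b} (giving the $C_{12}k^{-1/2}h$, $C_{13}k^{-1/2}h$, $C_{14}k^{-1/2}h$ terms). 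Using $\sqrt{a+b}\le\sqrt a+\sqrt b$ on the square of the left-hand sides of \eqref{eq4.26}--\eqref{eq4.28} — or, equivalently, adding the two telescoped inequalities under the $\max$ and under $E[k\sum_n\cdot]$ and then taking roots — yields \eqref{eq4.26}--\eqref{eq4.28} exactly as stated, with the $l$-restrictions $l\ge1,2,3$ inherited from Theorem \ref{thm4.1}.

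For the pressure estimate \eqref{eq4.29} I would write
\[
\int_0^{t_m}p(s)\,ds - k\sum_{n=1}^m p^n_h
= \Bigl(\int_0^{t_m}p(s)\,ds - k\sum_{n=1}^m p^n\Bigr)
+ k\sum_{n=1}^m\bigl(p^n-p^n_h\bigr),
\]
take $\|\cdot\|_{L^2}$, expectation, and the triangle inequality; the first term is bounded by \eqref{eq3.21} ($\le C_9 k^{1/2}$) and the second by \eqref{eq4.22} ($\le C_{15}k^{-1/2}h$), with the observation that \eqref{eq4.22} bounds $E[\|k\sum_{n=1}^N(p^n-p^n_h)\|_{L^2}]$ over the full range; one notes $k\sum_{n=1}^m(p^n-p^n_h)$ for $m\le N$ is handled by the same argument as in Theorem \ref{thm4.2} (the partial sum only shortens the range of summation in \eqref{eq4.23}--\eqref{eq4.24}), so the bound is uniform in $m$. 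This gives \eqref{eq4.29}.

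\textbf{Main obstacle.} There is no genuine obstacle — the work was done in Sections \ref{sec-3} and \ref{sec-4}. The only point deserving a sentence of care is that the time-discretization estimates (Theorems \ref{thm3.2}, \ref{thm3.3}) are stated comparing $u(t_n)$ with $u^n$, while the spatial estimates (Theorems \ref{thm4.1}, \ref{thm4.2}) compare $u^n$ with $u^n_h$ under the convention $u^0=u_0$, $u^0_h=\cP_h u_0$; since both families use consistent initial data and the triangle inequality is applied at matching time levels $t_n$, the constants simply add and no compatibility issue arises. Hence the proof is a two-line triangle-inequality argument for each of the four estimates.
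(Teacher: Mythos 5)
Your proposal is correct and matches the paper's own argument: the paper derives these global estimates exactly by the triangle inequality applied to the splittings $u(t_n)-u^n_h=(u(t_n)-u^n)+(u^n-u^n_h)$ and $\int_0^{t_m}p\,ds-k\sum p^n_h=(\int_0^{t_m}p\,ds-k\sum p^n)+k\sum(p^n-p^n_h)$, invoking Theorems \ref{thm3.2}, \ref{thm3.3}, \ref{thm4.1} and \ref{thm4.2}. Your remark that the bound of Theorem \ref{thm4.2} holds uniformly in $m\leq N$ is also consistent with the paper, whose proof of that theorem sums only up to $m$.
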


\begin{remark}\label{rem7}
(a) It is clear from the above derivation that the only property of the Taylor-Hood mixed finite
element which is used in our analysis is its stability property, hence, the Taylor-Hood
element can be replaced by any stable mixed finite element (such as the MINI element), the analysis
still holds without any change.

(b) Since the above error bounds are of the order $O(k^{-\frac12} ( k+h) )$,  they suggest that the
	balanced choices of the mesh parameters are  $k\approx h$.

(c) The error bounds $E\bigl(\underset{{1\leq n\leq N}}\max\|u(t_n)-u^n_h\|^2_{L^2}\bigr)$,
$E\bigl(\underset{{1\leq n\leq N}}\max\|\nabla(u(t_n)-u^n_h)\|^2_{L^2}\bigr)$,
and $E\bigl(\underset{{1\leq n\leq N}}\max\|A(u(t_n)-u^n_h)\|^2_{L^2}\bigr)$
can also be derived using our analysis techniques.
\end{remark}

%%%%%%
\section{Numerical experiments}\label{sec-5}
In this section, we present two $2$-D numerical tests to validate our theoretical
error estimates and to gauge the performance of the proposed fully discrete mixed method.
The experiments have been performed using the software package FreeFem++ \cite{H08}
in conjunction with UMFPACK \cite{Davis2004} and MATLAB on an
Intel(R) Core(TM) i7-8700 CPU @3.20 GHz with 16GB RAM.

\medskip
{\bf Test 1.}  In the first test, we take $D=(-1,1)^2$ and a deterministic constant force term $f$,
as well as the initial condition $u_0=0$.
In addition, $W$ in \eqref{eq1.1} is taken as a finite-dimensional $Q$-Wiener
process and $B(t,u(t))\equiv (u(t)^2+1)^{\frac{1}{2}} \in L^2(L^2(D), [L^2(D)]^d)$ such that
\[
\Bigl(\int_0^t B(s,u(s)) \,dW(s), \phi\Bigr) =\sum_{j,k=1}^M \lambda^{\frac{1}{2}}_{j,k}  \int_0^t \bigl(\bigl(u(s)^2+1\bigr)^{\frac{1}{2}}q_{j,k}, \phi \bigr)\, d\beta_{j,k}(s)
\qquad\forall \phi\in [L^2(D)]^d,
\]
where $\lambda_{j,k}=\frac{1}{(j+k)^2}\|g_{j,k}\|_{L^2}$ and
$$g_{j,k}(x,y):=\Bigl(c\bigl(\sin(j\pi x)+(j\pi x)^3\bigr)e^{-k\pi y},c\bigl(\cos(j\pi y)+(j\pi y)^3\bigr)e^{-k\pi x}\Bigr).$$
The orthogonal functions $q_{j,k}$ are defined by $q_{j,k}=:\frac{g_{j,k}}{\|g_{j,k}\|_{L^2}}$.
Hence, we take $Q=\mbox{span}\{q_{1,1},\cdots,q_{M,M}\}\subset H^1(D)$.
The lower order Taylor-Hood ($P_2-P_1$) element is employed.
Moreover, the following parameters are used for the test: $c=0.1$, $M=10$, $\nu=1$, $T=1$, $h=\frac{1}{40}$,
$k_0 = 1/10240$ (the minimum time step). Note that the change of the given functions $g_{j,k}(x,y)$
 is very dramatic in domain $D$, hence, it is reasonable to set $M=10$. The classical Monte Carlo method
with $N_p = 6000$ realizations is used to compute the expectation.
For any $1\leq n \leq N$, we use the following numerical integration formulas
\begin{align*}
AU^n &:=\Bigl(E\Bigl[\|U_{k_0}^n-U_{k_i}^n\|^2_{L^2}\Bigr]\Bigr)^{\frac12} \approx
\Bigl(\dfrac{1}{N_p}\sum_{\ell=1}^{N_p}\|U_{k_0}^n(\omega_\ell)-U_{k_i}^n(\omega_\ell)\|^2_{L^2}\Bigr)^{\frac12},\\
BU^n &:=\Bigl(E\Bigl[\|\nabla (U_{k_0}^n-U_{k_i}^n)\|^2_{L^2})\Bigr]\Bigr)^{\frac12} \approx
\Bigl(\dfrac{1}{N_p}\sum_{\ell=1}^{N_p}\|\nabla (U_{k_0}^n(\omega_\ell)-U_{k_i}^n(\omega_\ell))\|^2_{L^2}\Bigr)^{\frac12}, \\
AP^n:&=\Bigl(E\Bigl[\Big\|\int_0^Tp(s)ds-k_i\sum_{n=1}^{\frac{T}{k_i}}p_{h}^n\|^2_{L^2}\Bigr]\Bigr)^{\frac12}\\
&\approx
\Bigl(\dfrac{1}{N_p}\sum_{\ell=1}^{N_p}\Big\|k_0\sum_{n=1}^{\frac{T}{k_0}} p_{h}^n(\omega_\ell) -k_i\sum_{n=1}^{\frac{T}{k_i}}p_h^n(\omega_\ell)\Bigr\|^2_{L^2}\Bigr)^{\frac12},
\end{align*}
and
\begin{align*}
BP^n &:=\Bigl(E\Bigl[\|p_{k_0}^n-p_{k_i}^n\|^2_{L^2}\Bigr]\Bigr)^{\frac12} \approx
\Bigl(\dfrac{1}{N_p}\sum_{\ell=1}^{N_p}\|p_{k_0}^n(\omega_\ell)-p_{k_i}^n(\omega_\ell)\|^2_{L^2}\Bigr)^{\frac12}
\end{align*}
to approximate strong norms. An iterative linear solver based on the artificial
compressibility technique (cf. \cite{Ern_Guermond04}) is used to solve the linear system at each time step.

Figure \ref{fig1} displays the $L^2$ and $H^1$-norm errors ($AU^N$ and $BU^N$) of the time approximations
of the velocity
using different time step size $k$. It is clear that the numerical results verify the half order convergence
rate for the time discretization as predicted by the error analysis.

The left plot of Figure \ref{fig2} shows the $L^2$-norm error ($AP^N$) of the time-averaged pressure approximation
using different time step size $k$. The numerical results clearly verify the half order convergence rate as predicted
by the error analysis.
For curiosity and comparison purpose, we also present in the right plot of Figure \ref{fig2}
the standard $L^2$-norm error ($BP^N$) of the time approximations of the pressure using different time step size $k$.
The numerical results seem to suggest a convergence in that norm but with a much slow rate,
which is certainly caused by the low regularity of the pressure $p$.  It should be noted that
our convergence theory does not cover this case.

To verify the necessity of the error bound dependence on the factor $k^{-\frac1 2}$,
we fix $h=1/20$ and run the test again use different time step size $k$. The numerical results, presented in
Figure \ref{fig3}, show that both errors $AU^n$ and $AP^n$ increase as $k$ decreases, which proves that
both errors are inversely proportional to (a power of) the time step size $k$.
To verify the sharpness of the error bound dependence on the factor $k^{-\frac1 2}$, we
run the test again using $k\approx h$ for $(k,h)=(1/10, 1/8),(1/20,1/16), (1/40,1/32)$ and display the
numerical results in Figure \ref{fig4}. We observe that the numerical results show $O(k^{\frac12})$ order
convergence rate for the fully discrete scheme which exactly matches the theoretical rate predicted
by the error analysis.

\begin{figure}[th]
\centerline{
\includegraphics[scale=0.40]{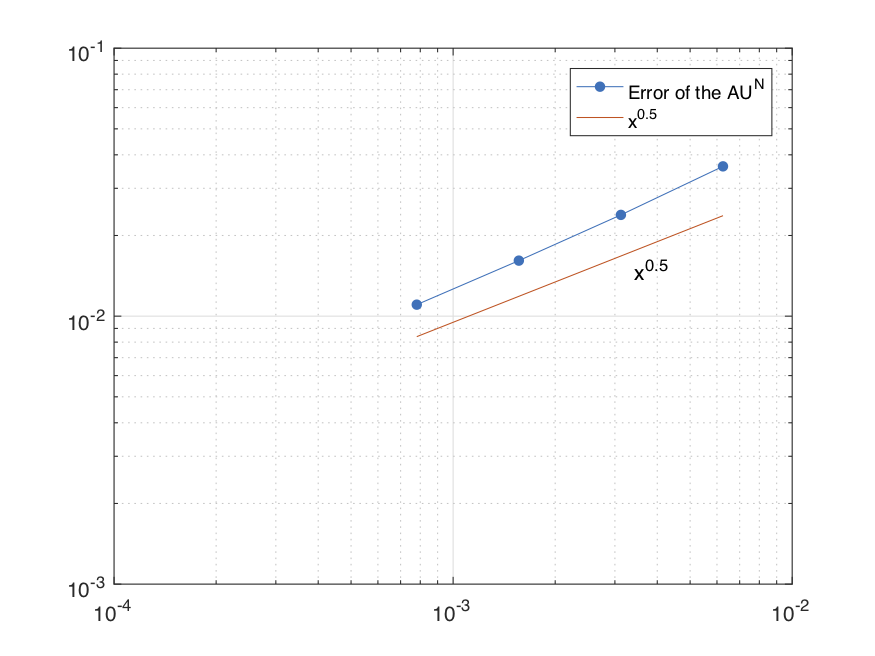}
\includegraphics[scale=0.40]{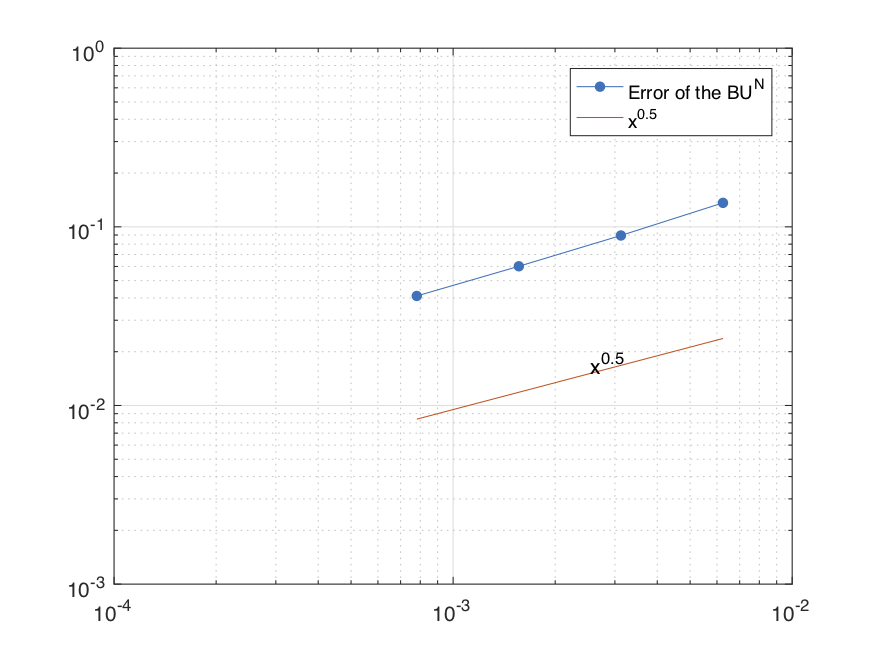}
}
\caption{Test 1: (a) The convergence rates $AU^N$; (b) the convergence rates $BU^N$.}
\label{fig1}
\end{figure}

\begin{figure}[th]
\centerline{
\includegraphics[scale=0.40]{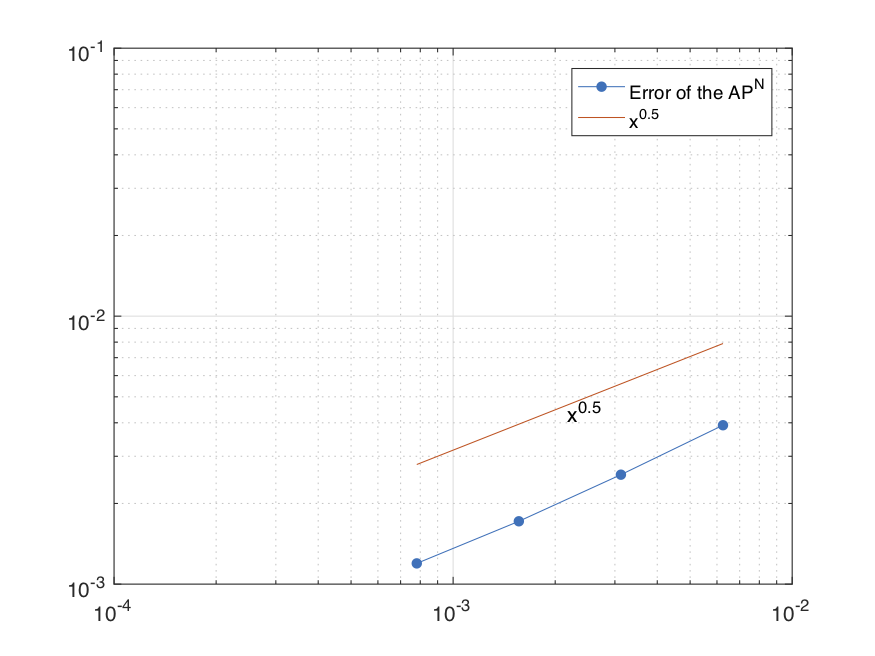}
\includegraphics[scale=0.40]{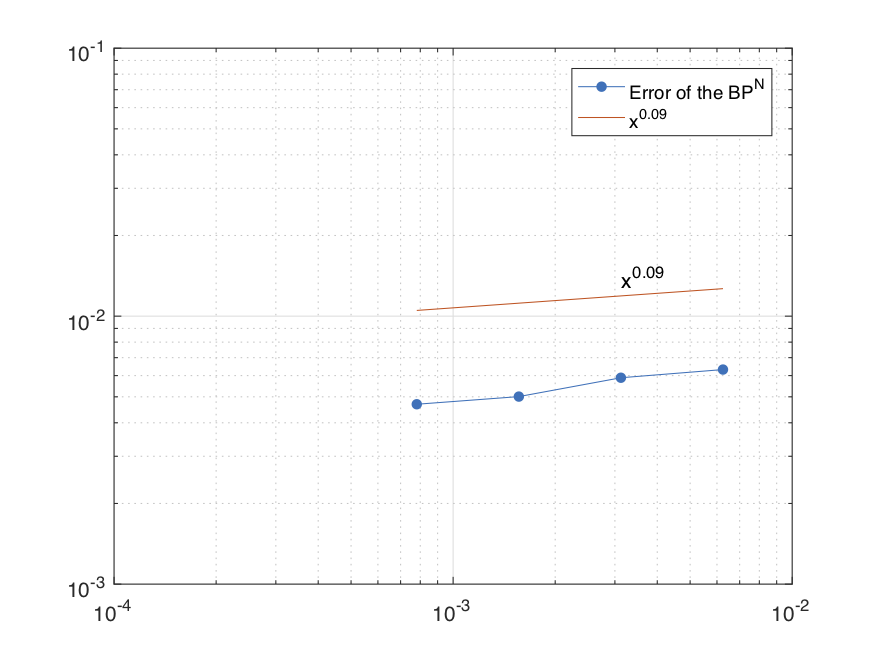}
}
\caption{Test 1: (a) The convergence rates $AP^N$; (b) the convergence rates $BP^N$.}
\label{fig2}
\end{figure}

\begin{figure}[th]
\centerline{
\includegraphics[scale=0.40]{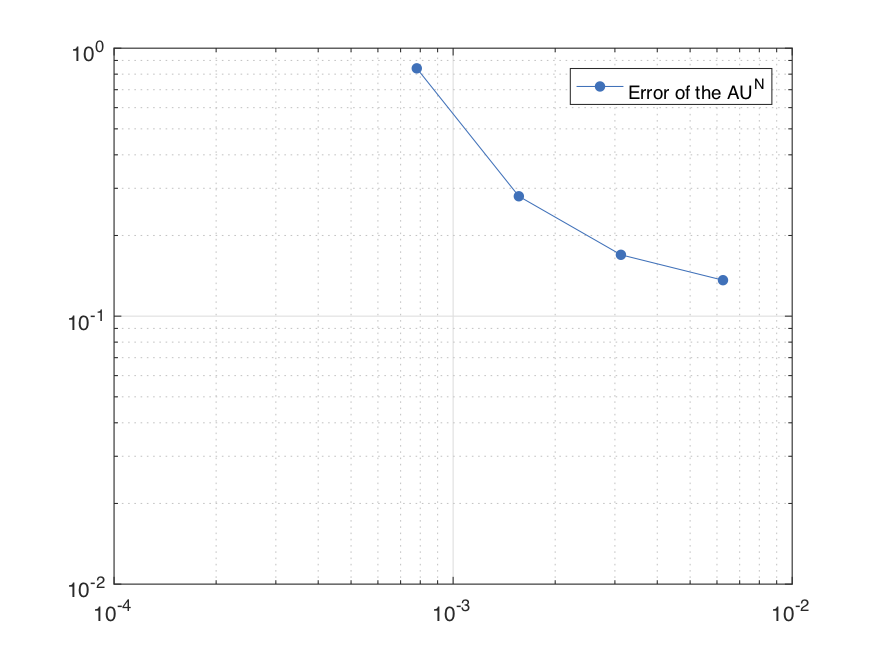}
\includegraphics[scale=0.40]{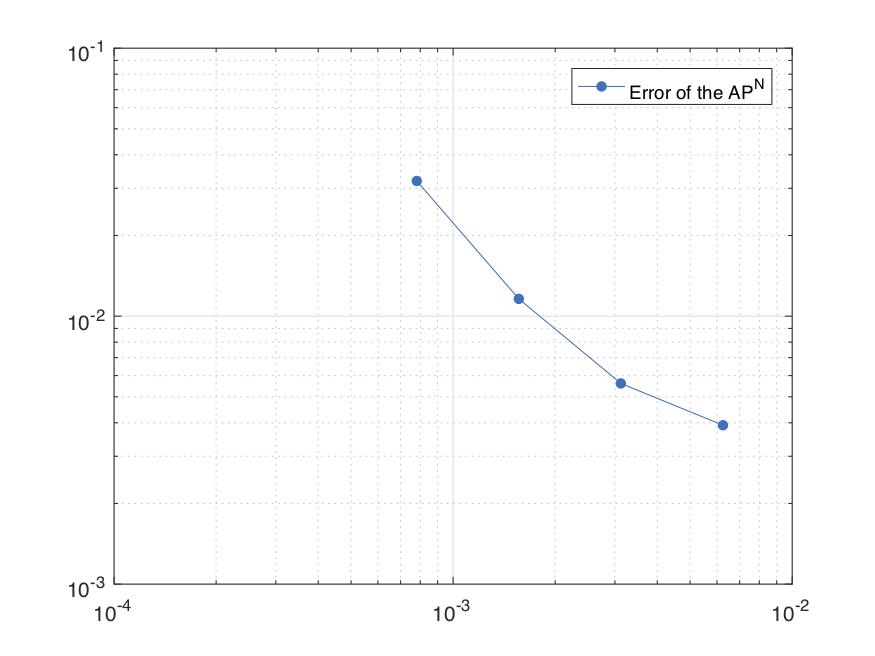}
}
\caption{Test 1: (a) The convergence rates $AU^N$; (b) the convergence rates $AP^N$.}
\label{fig3}
\end{figure}

\begin{figure}[th]
\centerline{
\includegraphics[scale=0.40]{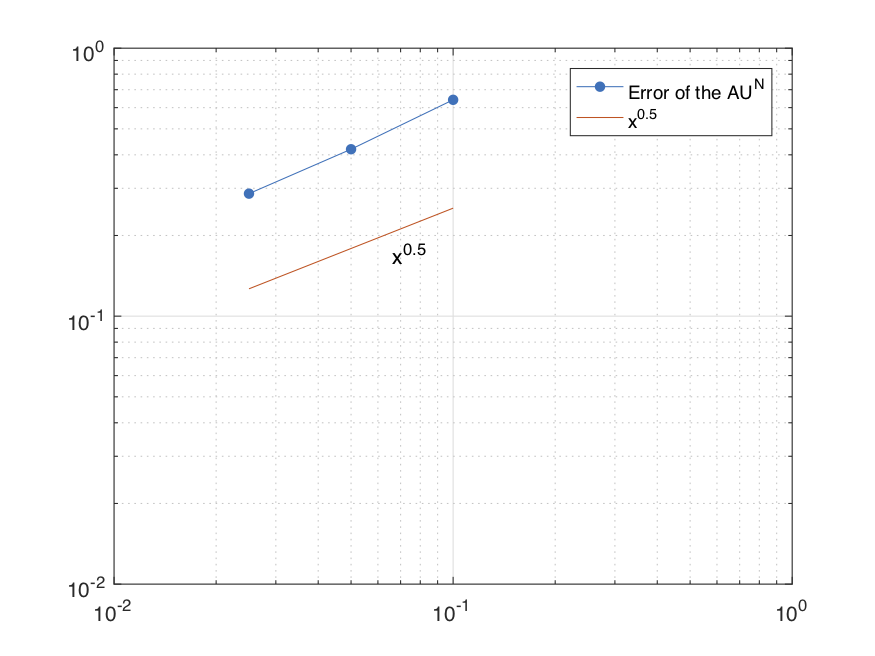}
\includegraphics[scale=0.40]{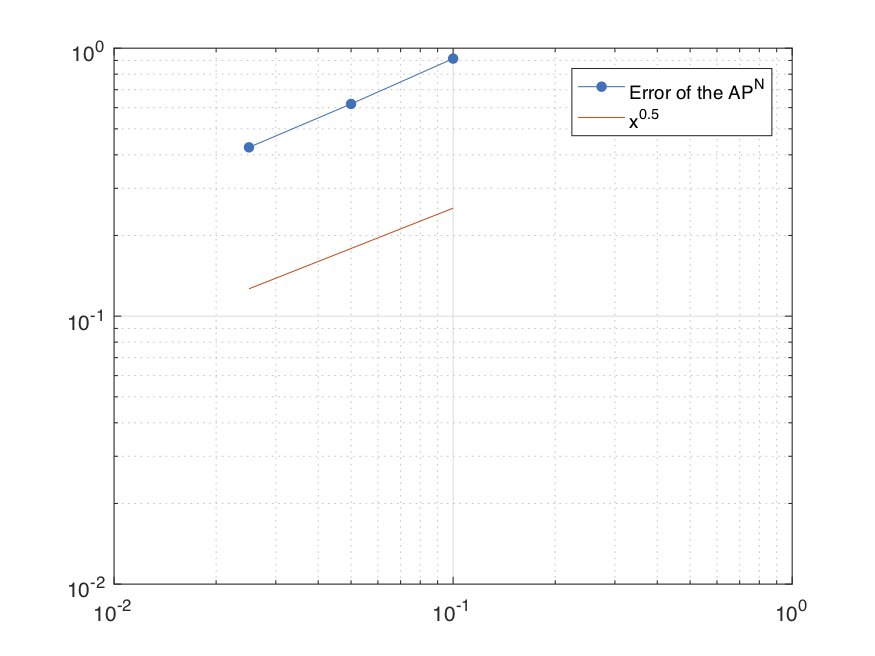}
}
\caption{Test 1: (a) The convergence rates $AU^N$; (b) the convergence rates $AP^N$.}
\label{fig4}
\end{figure}

\medskip
{\bf Test 2.} In the second numerical test, we compute the driven cavity flow on a unit square $(0,1)^2$.
In this test the force function $f$ is chosen to be the constant zero-vector $(0,0)$.
The no-slip boundary condition is only imposed on the part of the boundary $\{(x,1): 0<x<1\}$ with the
velocity $u=(1,0)$, and the zero Dirichlet condition is imposed on the rest of the boundary. The lowest order Taylor-Hood ($P_2-P_1$) element is used in this test.
The same finite-dimensional Q-Wiener process as in {\bf Test 1} is used and
we take $B(\cdot,u)= 1$ and use the following parameters: $c=1$, $M=10$, $\nu=1$, $T=1$, $h=\frac{1}{20}$,
$k = 0.005$, and the realization number $N_p = 5000$.

Figure \ref{fig5} plots (a) the expected value of the pressure $p_h^N$; (b) the expected value of the velocity
field $u_h^N$; (c) the streamlines of the expected value of $u_h^N$.
Figures \ref{fig6}--Fig \ref{fig8} show three computed samples of the pressure $p_h^N$ and the velocity
$u_h^N$  as  well as the streamlines of $u_h^N$.  From
Figure \ref{fig5}, we observe that the expectations of the pressure and velocity fields behave
similarly to their deterministic counterparts, on the other hand, Figures \ref{fig6}--Fig \ref{fig8}
show that the stochastic pressure and velocity samples could be very different from their deterministic fields.

\begin{figure}[h]
\centerline{
\includegraphics[scale=0.25]{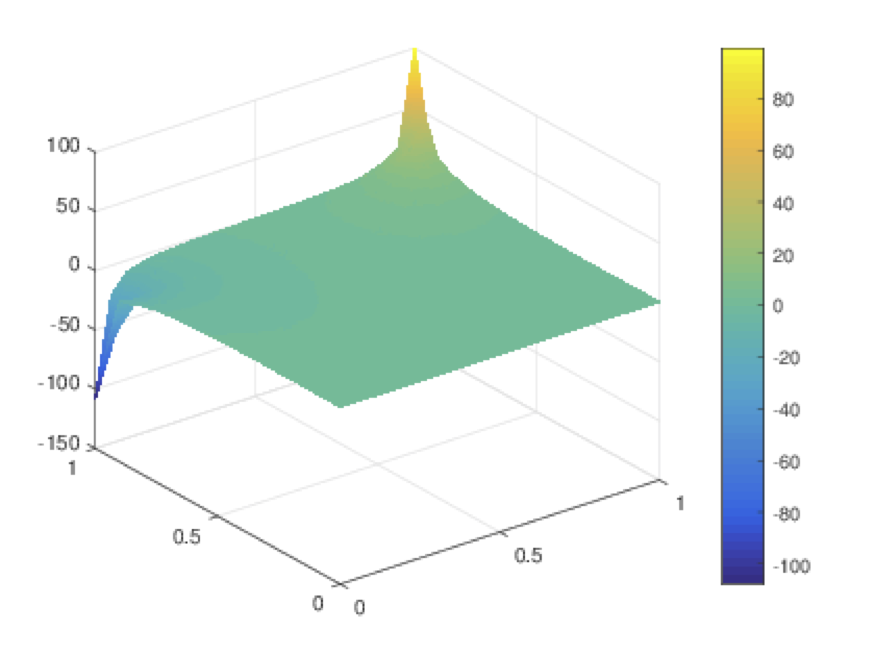}
\includegraphics[scale=0.30]{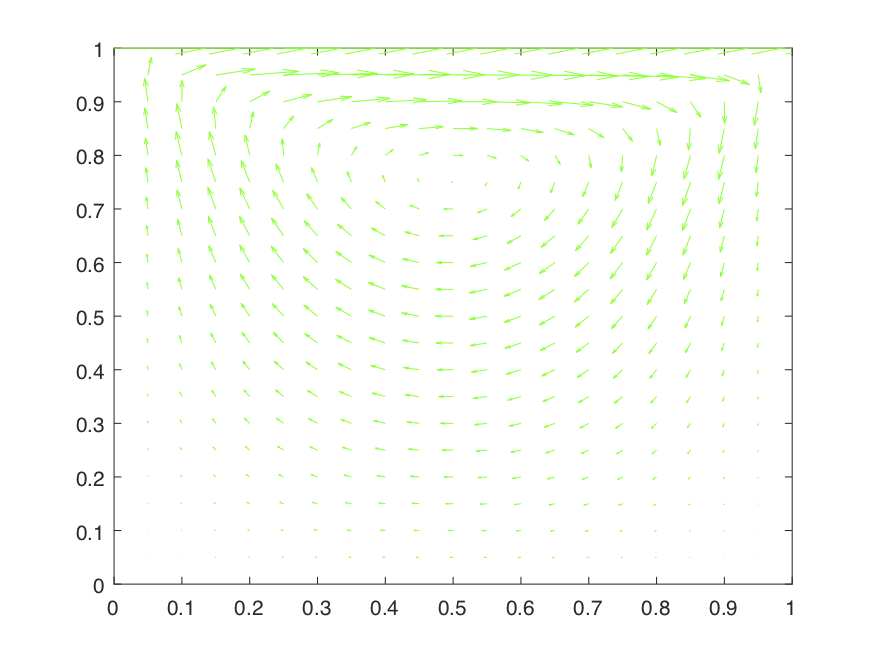}
\includegraphics[scale=0.30]{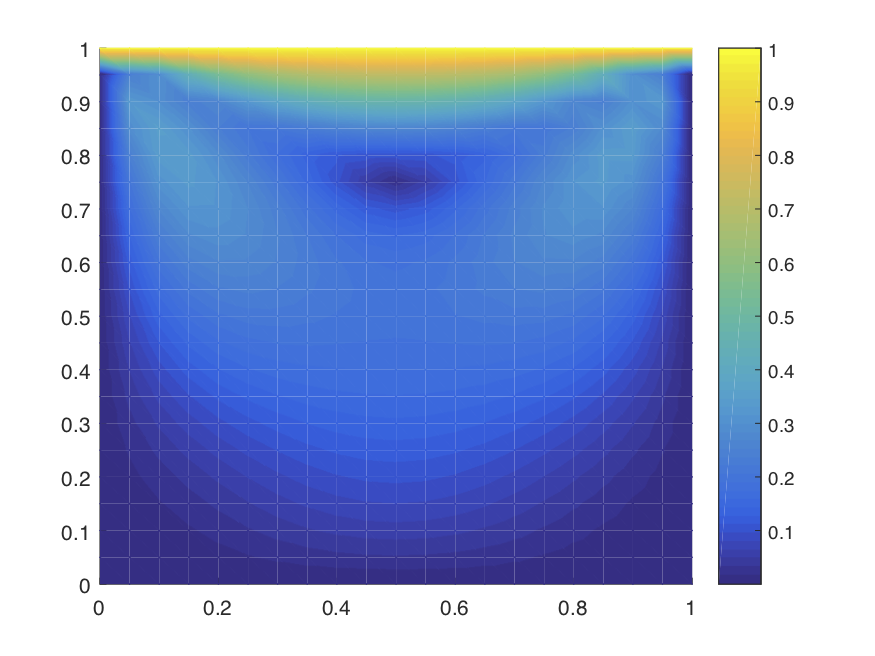}
}
\caption{Test 2: (a) The expected value of $p_h^N$; (b) the expected value of $u_h^N$;  (c)  the streamlines of the expected value of
	$u_h^N$.}
\label{fig5}
%\end{figure}
\medskip
%\begin{figure}[h]
\centerline{
\includegraphics[scale=0.25]{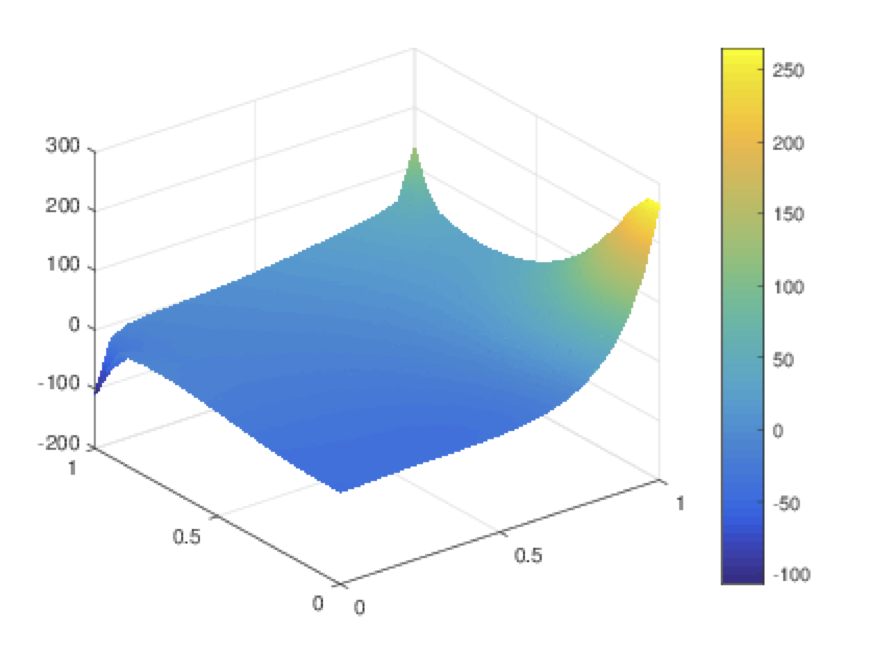}
\includegraphics[scale=0.30]{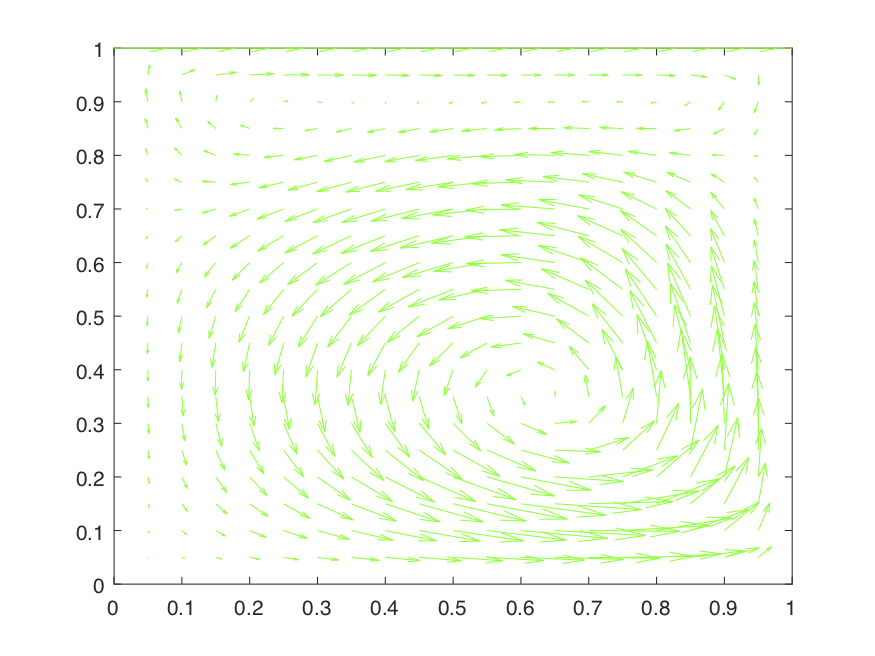}
\includegraphics[scale=0.30]{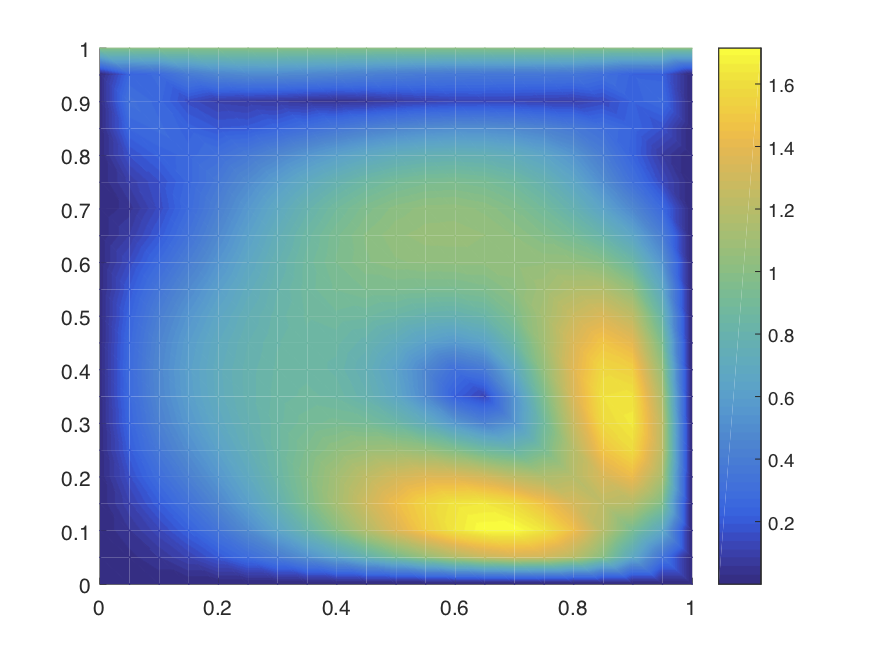}
}
\caption{Test 2:  The  first realization of  (a)  $p_h^N$; (b) $u_h^N$;  (c)  the streamlines of $u_h^N$.}
\label{fig6}
%\end{figure}
\medskip
%\begin{figure}[b]
\centerline{
\includegraphics[scale=0.25]{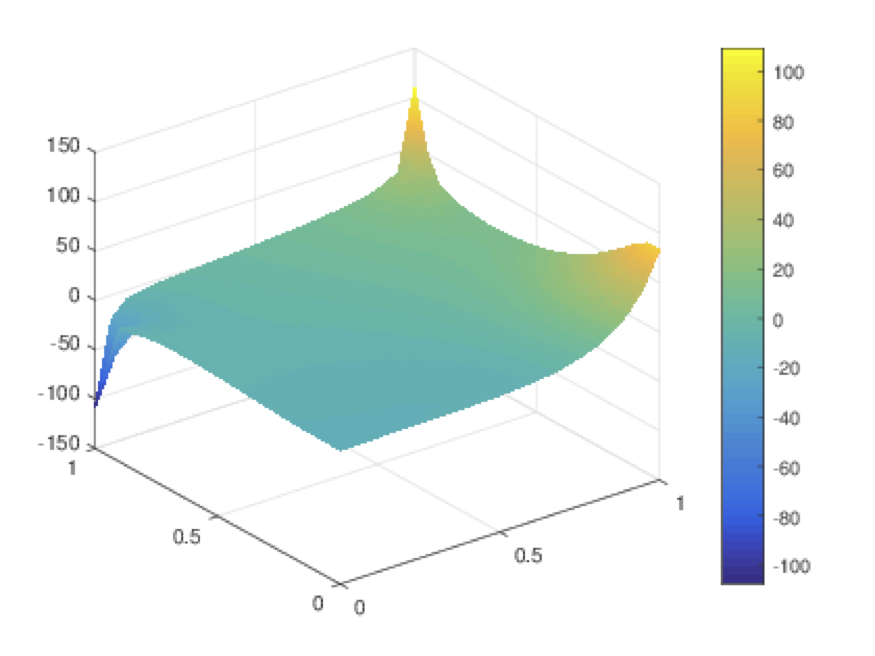}
\includegraphics[scale=0.30]{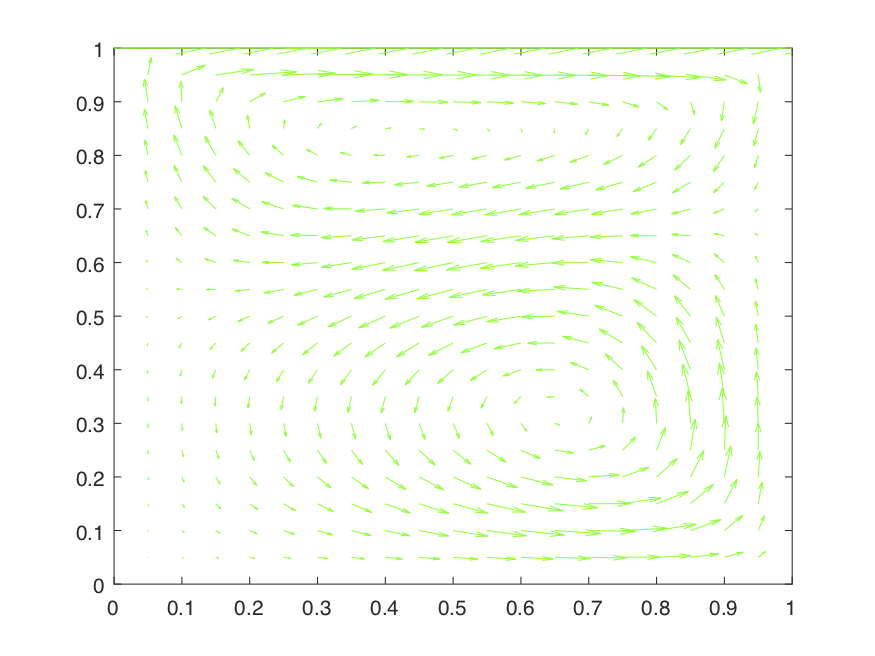}
\includegraphics[scale=0.30]{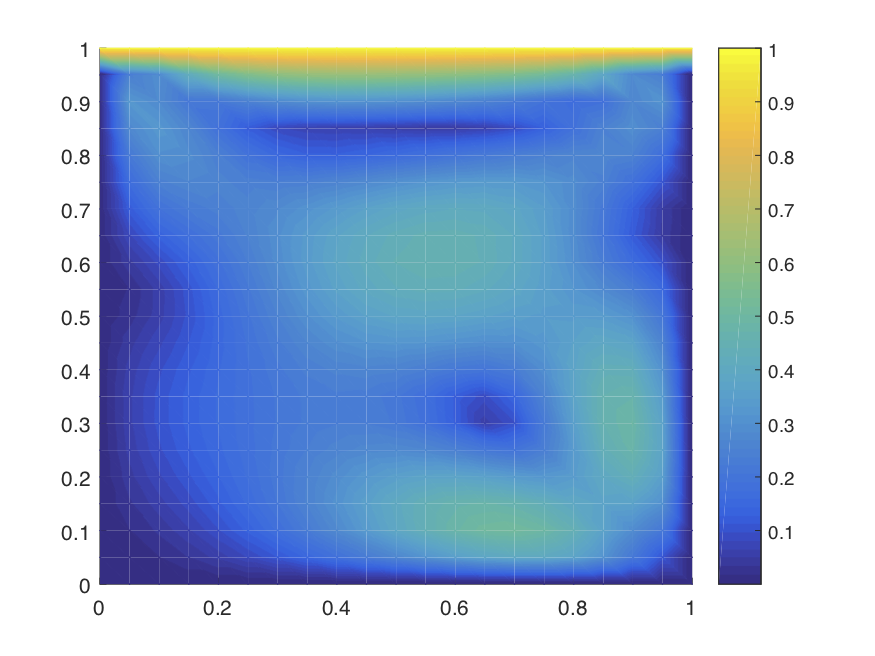}
}
\caption{Test 2:  The  second realization of  (a)  $p_h^N$; (b) $u_h^N$;  (c)  the streamlines of $u_h^N$.}
\label{fig7}
%\end{figure}
\medskip
%\begin{figure}[h]
\centerline{
\includegraphics[scale=0.25]{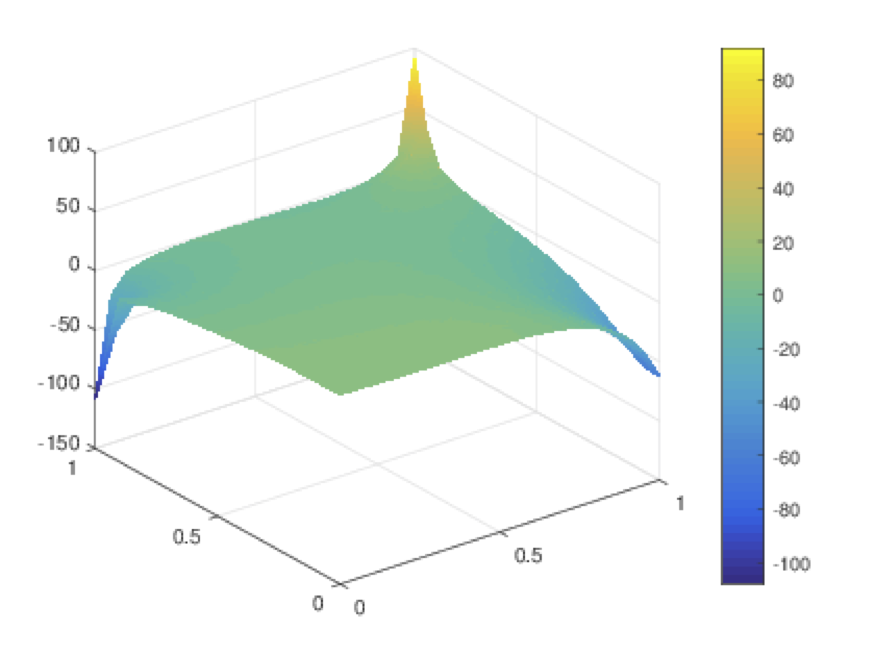}
\includegraphics[scale=0.30]{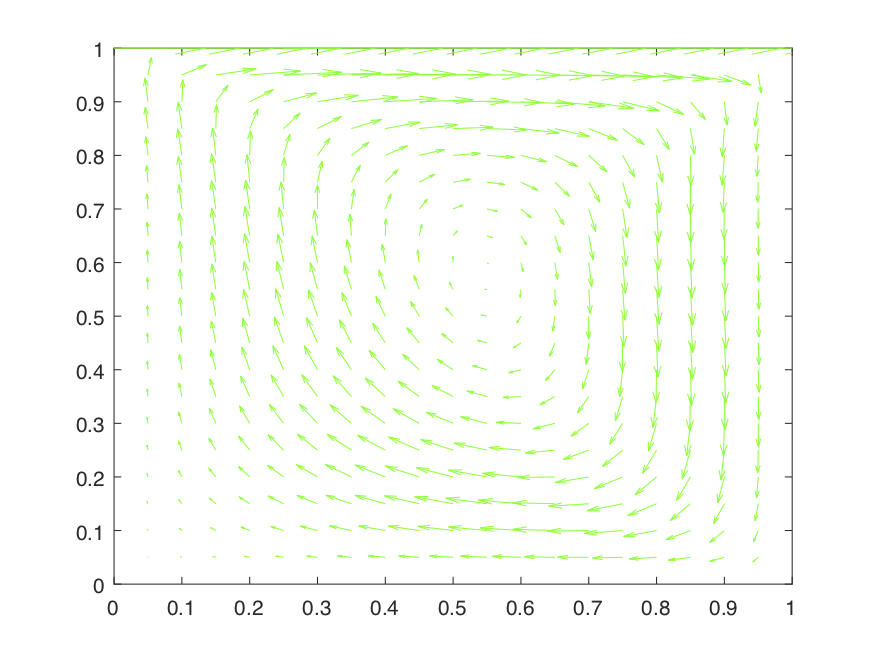}
\includegraphics[scale=0.30]{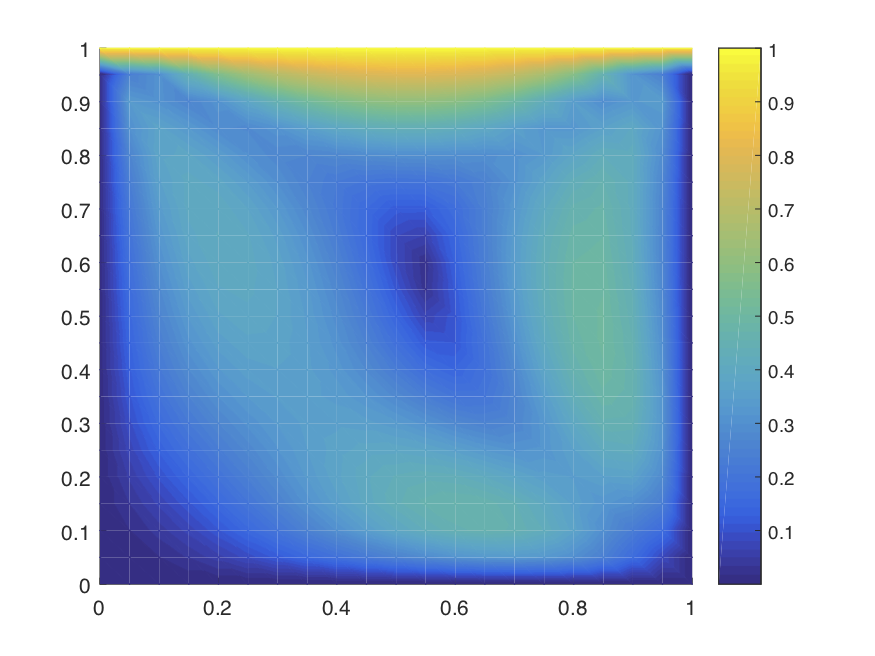}
}
\caption{Test 2: The  third realization of  (a)  $p_h^N$; (b) $u_h^N$;  (c)  the streamlines of $u_h^N$. }
\label{fig8}
\end{figure}

\bigskip
\textbf{Acknowledgments.} The authors would like to thank Professor Andreas Prohl of University of T\"ubingen (Germany)
for his many stimulating discussions and critical comments as well as valuable suggestions which help to improve
the early version of the paper considerably. In addition, his help on introducing and explaining several relevant
references is also greatly appreciated.

%%%%%%%%%

\end{document}